%% file: main.tex
\documentclass[final]{siamart190516}

\usepackage{amsmath, amsbsy, amsgen, amssymb, bm, mathtools}
\usepackage{graphicx, hyperref}
\usepackage[noend]{algpseudocode}
\usepackage{algorithm, algorithmicx}
\usepackage{subcaption, float}
\usepackage{makecell}
\usepackage{multirow}
\usepackage{dsfont}
\usepackage{booktabs}
\usepackage{subcaption}
\usepackage[normalem]{ulem}

\newcommand{\ignore}[1]{}

\usepackage{tikz}
\usetikzlibrary{shapes.geometric, arrows.meta, positioning, calc}
\tikzset{
  startstop/.style = {ellipse, draw, very thick, minimum width=3cm, minimum height=1cm, align=center},
  process/.style   = {rectangle, draw, very thick, minimum width=3.2cm, minimum height=1cm, align=center},
  io/.style        = {trapezium, trapezium left angle=70, trapezium right angle=110, draw, very thick, minimum width=3cm, minimum height=1cm, align=center},
  decision/.style  = {diamond, aspect=2, draw, very thick, minimum width=3.2cm, minimum height=1cm, align=center},
  arrow/.style     = {->, >=Stealth, thick, shorten >=3pt, shorten <=3pt},
  merge/.style     = {coordinate}
}

\algrenewcommand\alglinenumber[1]{\sf\scriptsize\color{black}{#1}}
\algrenewcommand\algorithmicrequire{\textbf{Require:}}
\algrenewcommand\algorithmicensure{\textbf{Ensure:}}

\numberwithin{equation}{section}
\numberwithin{theorem}{section}
\numberwithin{figure}{section}
\newtheorem{remark}{Remark}[section]

\newcommand{\vct}[1]{\bm{#1}}
\newcommand{\mtx}[1]{\bm{#1}}

\newcommand{\jh}[1]{\ifcomment \textcolor{red}{[\textbf{JH:} #1]}\fi}
\newcommand{\yn}[1]{\ifcomment \textcolor{blue}{[\textbf{YN:} #1]}\fi}

\newif\ifcomment
\commentfalse
\newif\ifversionone
\versiononefalse
\newif\ifversiontwo
\versiontwotrue

\newif\iftransfer
\transferfalse   

\def\mainalg{APLICUR}
\def\F{\mathbb{R}}

\def\l{\ell}
\def\Vhat{\mtx{\widehat{U}}}
\def\Vhat{\mtx{\widehat{V}}}
\def\Lhat{\mtx{\widehat{\Sigma}}}
\def\svhat{\hat\sigma}

\title{Adaptive LSQR Preconditioning\\from One Small Sketch}
\author{Jung Eun Huh, Coralia Cartis, Yuji Nakatsukasa}
\date{March 2025}

\begin{document}

\maketitle

\begin{abstract}
    We propose \mainalg{}, an adaptive preconditioning framework for large-scale linear least-squares (LLS) problems. Using a single small sketch computed once at initialization, \mainalg{} incrementally refines a CUR-based preconditioner throughout the Krylov solve, interleaving preconditioning with iteration. This enables early convergence without the need to construct a costly high-quality preconditioner upfront.
    With a modest sketch dimension (typically $5 - 250$), largely
    independent of both the problem size and numerical rank, \mainalg{} achieves convergence guarantees that are likewise independent of the sketch size. The method is applicable to general matrices without structural assumptions (e.g. need not be heavily overdetermined) and is well suited to large, sparse, or numerically low-rank problems.
    We conduct extensive numerical studies to examine the behavior of the proposed framework and guide the effective algorithmic design choices. Across a range of test problems, \mainalg{} achieves competitive or improved time-to-accuracy performance compared with established randomized preconditioners, including Blendenpik and Nystr\"om PCG, while maintaining low setup cost and robustness across problem regimes.
\end{abstract}

\section{Introduction}

Large-scale regularized linear least-squares (LLS) of the form
\begin{align}\label{eq:regularizedLLS}
    \min_{\vct{x} \in \F^n} \|\mtx{A}\vct{x}-\vct{b}\|_2^2 + \mu \|\vct{x}\|_2^2, \quad \mu \geq 0,
\end{align}
with $\mtx{A} \in \F^{m\times n}$, $\vct{b} \in \F^m$, and $m\geq n\geq1$ arise throughout scientific computing, inverse problems, and machine learning. For large and potentially ill-conditioned systems, Krylov subspace methods such as CG~\cite{hestenes1952methods} applied to the normal equation and LSQR~\cite{paige1982lsqr} are widely used for solving \eqref{eq:regularizedLLS} due to their low memory footprint and reliance only on matrix–vector products. However, their convergence can be prohibitively slow when the system is ill-conditioned or has unfavorable spectral structure~\cite{barthelme2021spectral}.

Preconditioning is a fundamental tool for accelerating Krylov subspace methods for large-scale linear systems and least-squares problems~\cite{benzi2002preconditioning}. Classical approaches aim to improve conditioning with deterministic techniques including polynomial preconditioning~\cite{ashby1988polynomial} and incomplete factorization methods~\cite{benzi2003robust}. While effective in many settings, such methods can be expensive to adapt to large-scale problems.
Randomized sketch-based approaches have achieved significant success by efficiently constructing a preconditioner from a spectral surrogate of the system matrix using a randomized embedding~\cite{halko2011finding,drineas2016randnla,mahoney2011randomized}. For example, Blendenpik~\cite{avron2010blendenpik, rokhlin2008fast} uses a sketched QR factorization to build a full-rank preconditioner, while LSRN~\cite{meng2014lsrn} employs random normal projections to construct a well-conditioned LLS.  Blendenpik-variant SKiLLS \cite{cartis2021hashing}  exploits sparse sketching for sparse and dense, possibly rank-deficient, inputs. 

Recent line of research based on randomized Nystr\"om approximation~\cite{frangella2023randomized,derezinski2025faster,zhao2024adaptive} construct low-rank spectral preconditioner adapted to the numerical rank. A key observation underlying these approaches is that matrices arising in applications often exhibit rapidly decaying or clustered spectra, particularly in kernel methods and Gaussian process settings~\cite{alaoui2015fast,gonen2016solving,barthelme2021spectral,daskalakis2022good}. This structure enables accurate low-rank approximation and suggests that effective preconditioning need not reshape the entire spectrum, but rather should target the dominant spectral components. This perspective is closely related to deflation and augmentation techniques in Krylov methods~\cite{gutknecht2012spectral,gaul2013framework,coulaud2013deflation}, which accelerate convergence by removing or correcting a troublesome subspace.

Despite this progress, existing randomized preconditioners exhibit several limitations. Many methods require sketch sizes that scale with the problem dimension or numerical rank, or rely on repeated sketching to estimate this rank~\cite{avron2010blendenpik,lacotte2020effective,diaz2023robust,frangella2023randomized,zhao2024adaptive,derezinski2024solving}. Moreover, most approaches construct a \emph{static} preconditioner prior to the Krylov solve, which can incur significant upfront cost and delay convergence, particularly when only moderate accuracy is required. 
A recent work~\cite{derezinski2025faster} theoretically explores an adaptive and multi-level sketching strategy for preconditioning, but it still relies on repeated sketching, which requires multiple passes over the matrix, and the sketch size depends on the desired preconditioning strength.
In addition, many methods rely on restrictive structural assumptions, such as highly overdetermined regimes~\cite{avron2010blendenpik}, or require symmetry or positive definiteness---often enforced by working with the normal equations~\cite{cutajar2016preconditioning,derezinski2025faster,diaz2023robust,frangella2023randomized}, which may be unsuitable or numerically unstable for general rectangular problems. In particular, the normal equations approach is 
to be avoided unless the condition number $\kappa(\mtx{A})$ is $O(1)$ in double precision, to avoid numerical instability~\cite[Section 5.3.7]{golub2013matrix}. 

These limitations motivate the development of adaptive preconditioning strategies that better align computational effort with the evolving needs of the solver. These limitations motivate the development of adaptive preconditioning strategies that better align computational effort with the evolving needs of the solver.
Consequently, designing preconditioners that simultaneously (i) avoid large or repeated sketches, (ii) adapt to unknown spectral structure, (iii) integrates naturally with the evolving needs of the Krylov solver, and (iv) apply broadly to general matrices without structural assumptions remains an open challenge.

\subsection{Contributions}

In this work, we address these challenges by introducing \mainalg{}, an adaptively preconditioned LSQR method based on iterative CUR approximations. The key idea is to begin solving immediately using a cheap, low-rank preconditioner, and to refine this preconditioner only when additional spectral correction is needed. Rather than constructing a high-quality preconditioner upfront, \mainalg{} alternates between short LSQR phases and incremental updates of a CUR-based spectral preconditioner. Crucially, all updates are driven by a single small sketch, computed once at initialization and reused throughout.

Our approach combines two key ingredients. First, we employ CUR-based low-rank approximations~\cite{osinsky2025close,zamarashkin2018existence,pritchard2025fast}, which select actual rows and columns of $\mtx{A}$ as natural bases for approximation, thereby preserving structural properties such as sparsity and enabling efficient incremental updates. Second, we adopt an adaptive preconditioning strategy that interleaves preconditioner refinement with Krylov iterations, aligning spectral correction with the evolving needs of the solver.

Our main contributions are as follows:

\begin{enumerate}
    \item \emph{Single-sketch adaptive preconditioning.} We propose a framework in which a single small sketch (typically of dimension $5$–$250$), independent of both the matrix dimensions and the unknown numerical rank, supports a fully adaptive preconditioner whose rank grows incrementally during the solve.

    \item \emph{Interleaved preconditioning and solving.} We introduce an adaptive strategy that alternates between LSQR iterations and preconditioner updates, enabling early progress and avoiding the cost of constructing a high-quality preconditioner upfront. This allows computational effort to be allocated adaptively throughout the solve.

    \item \emph{CUR-based adaptive preconditioning with broad applicability.} By leveraging iterative CUR approximations, we construct a low-rank spectral preconditioner that supports efficient rank updates while inheriting structural properties of $\mtx{A}$ such as sparsity. The method applies to general rectangular matrices without symmetry or definiteness assumptions, and does not rely on highly overdetermined regimes ($m \gg n$).
    
    \item \emph{Sketch-size-independent guarantees.} We establish bounds on the condition number and convergence rate of the preconditioned system that depend on the CUR approximation quality, but not on the sketch dimension.
    
    \item \emph{Empirical study and algorithmic design.} Extensive experiments across diverse spectral and structural regimes demonstrate that \mainalg{} achieves rapid early convergence and competitive or improved time-to-accuracy performance compared with Blendenpik and Nystr\"om PCG, particularly on large-scale sparse problems. Targeted numerical studies further provide insight into the algorithm’s behavior and guide the design of an efficient implementation.
\end{enumerate}

\subsection{Roadmap}
The remainder of the paper is organized as follows. \cref{sec:preliminaries} reviews preliminaries and the spectral motivation for preconditioning. \cref{sec:highlevelalg} introduces the CUR-based spectral preconditioner and the adaptive framework. \cref{sec:theory} presents the convergence analysis. \cref{sec:implementation} details the full implementation of \mainalg{}. \cref{sec:expframework,sec:numericalstudies,sec:benchmark} describe the experimental setup and report extensive numerical studies and comparisons with competing methods.

\section{Preliminaries and Motivation}\label{sec:preliminaries}
\subsection{Notations and problem formulation}
The regularized problem in \cref{eq:regularizedLLS} is equivalent to the augmented standard least-squares problem
\begin{align}\label{eq:augLLS}
    \min_{\vct{x}} \|\mtx{A}_\mu \vct{x} - \vct{b_{\text{aug}}}\|_2^2, \quad \mtx{A}_\mu = \begin{bmatrix}\mtx{A}\\ \mu\mtx{I}\end{bmatrix}, \quad \vct{b}_{\text{aug}}=\begin{bmatrix}\vct{b}\\\vct{0}\end{bmatrix}.
\end{align}

Let the singular value decomposition (SVD) of $\mtx{A}$ in \eqref{eq:regularizedLLS} be $$\mtx{A} = \mtx{U}\mtx{\Sigma}\mtx{V}^\top,$$where $\mtx{U} \in \F^{m \times n}$ and $\mtx{V} \in \F^{n\times n}$ are orthonormal and $\mtx{\Sigma} = \text{diag}(\sigma_1, \dots, \sigma_n) \in \F^{n \times n}$ contains the singular values in descending order. The singular values of $\mtx{A}_\mu$ are given by $\sigma_i(\mtx{A}_\mu)=\sqrt{\sigma_i^2 + \mu^2}$.

The optimal rank-$\l$ approximation is given by the \emph{rank-$\ell$ truncated SVD}~\cite{eckart1936approximation}:
$$\lfloor\mtx{A}\rfloor_\ell = \mtx{U}_\ell \mtx{\Sigma}_\ell \mtx{V}_\ell^\top,$$
where $\mtx{U}_\ell$ and $\mtx{V}_\ell$ consist of the first $\ell$ columns of $\mtx{U}$ and $\mtx{V}$, respectively, and $\mtx{\Sigma}_\ell$ is the $\ell \times \ell$ principal submatrix of $\mtx{\Sigma}$.

Throughout this paper, we denote the spectral (or vector $\ell_2$) norm by $\|\cdot\|_2$ and the Frobenius norm by $\|\cdot\|_F$. The $i$-th largest singular value of a matrix $\mtx{M}$ is denoted by $\sigma_i(\mtx{M})$, and $(\cdot)^\dagger$ indicates the Moore-Penrose pseudoinverse. We adopt MATLAB-style indexing for submatrices: for index vectors $\mtx{p}, \mtx{q} \in \F^\ell$, $\mtx{M}(\mtx{p}, :)$ and $\mtx{M}(:, \mtx{q})$ denote the submatrices consisting of rows indexed by $\mtx{p}$ and columns indexed by $\mtx{q}$, respectively.

\subsection{LSQR and convergence}\label{sec:section2}

LSQR~\cite{paige1982lsqr} is a Krylov subspace method for solving least-squares problems, including regularized ones~\cref{eq:regularizedLLS}. For simplicity, assume that $\mu = 0$. At iteration $k$, LSQR computes $\vct{x}_k$ that minimizes the residual norm $\|\mtx{A}\vct{x}-\vct{b}\|_2$ over the affine Krylov subspace
\begin{align*}
    \mathcal{K}_k^{\vct{x}_0} := \vct{x}_0 + \mathcal{K}_k(\mtx{A}^\top \mtx{A}, \mtx{A}^\top \vct{r}_0), 
    \quad \vct{r}_0 = \vct{b} - \mtx{A}\vct{x}_0,
\end{align*}
where $\mathcal{K}_k(\mtx{M},\vct{v}) = \mathrm{span}\{\vct{v},\mtx{M}\vct{v},\ldots,\mtx{M}^{k-1}\vct{v}\}$.

The residual norm decomposes orthogonally into two components,
\[
\|\mtx{A}\vct{x} - \vct{b}\|_2^2 
= \underbrace{\|\mtx{U}^\top(\mtx{A}\vct{x}-\vct{b})\|_2^2}_{\text{projected residual}} 
+ \underbrace{\|\mtx{A}\vct{x}_\ast - \vct{b}\|_2^2}_{\text{optimal residual}},
\]
where $\vct{x}_\ast$ is the minimizer.
The projected residual captures the part of the residual that LSQR actively reduces; the optimal residual is the irreducible error at the minimizer.

In exact arithmetic, LSQR is equivalent to applying conjugate gradients (CG) to the normal equations $\mtx{A}^\top\mtx{A}\vct{x}=\mtx{A}^\top\vct{b}$~\cite{paige1982lsqr}. Standard Krylov theory therefore yields the polynomial characterization
\begin{align}
\|\mtx{U}^\top\vct{r}_k\|_2^2
\le\;
\|\mtx{U}^\top\vct{r}_0\|_2^2
\min_{\substack{p_k\in\mathcal{P}_k\\p_k(0)=1}}
\max_{1\le i\le n} p_k(\sigma_i^2)^2,
\label{eq:weightedsum}
\end{align}
where $\mathcal{P}_k$ denotes polynomials of degree at most $k$.

This shows that LSQR convergence is governed by how well degree-$k$ polynomials approximate zeros over the spectrum of $\mtx{A}^\top\mtx{A}$,
\[
\sigma(\mtx{A}^\top\mtx{A}) = \{\sigma_1^2,\ldots,\sigma_n^2\}.
\]
Thus, convergence is governed by polynomial approximation on this set.

A classical bound based on Chebyshev polynomials gives
\begin{align}\label{eq:cvgbound_chebyshev}
\|\mtx{U}^\top\vct{r}_k\|_2
\le
2\left(\frac{\kappa(\mtx{A})-1}{\kappa(\mtx{A})+1}\right)^k
\|\mtx{U}^\top\vct{r}_0\|_2,
\end{align}
where $\kappa(\mtx{A})=\sigma_1/\sigma_n$~\cite[p.~187]{luenberger1973introduction}. However, this bound depends only on the extreme singular values and ignores interior spectral structure, and is therefore often pessimistic.

\subsubsection{Effect of spectral clustering}\label{sec:spectralclustering}

The polynomial characterization~\eqref{eq:weightedsum} shows that LSQR convergence is governed by the geometry of the spectral set $\sigma(\mtx{A}^\top\mtx{A})$. When the spectrum lies in a single interval, Chebyshev polynomials are optimal~\cite{fischer2011polynomial}. However, if the spectrum splits into disjoint intervals, substantially faster approximation can be achieved.

To formalize this, suppose that the squared singular values lie in the union of two disjoint intervals,
\[
\Omega = [a_1,a_2]\cup[a_3,a_4], \quad a_1<a_2<a_3<a_4.
\]
Define the associated \emph{asymptotic convergence factor}~\cite{eiermann1985study, fischer2011polynomial,schiefermayr2013estimatesasymptoticconvergence}
\[
\theta(\Omega)
= \lim_{k\to\infty}
\left(
\min_{\substack{p\in\mathcal{P}_k\\p(0)=1}}
\max_{x\in\Omega} |p(x)|
\right)^{1/k}.
\]
This quantity characterizes the optimal rate of polynomial decay on $\Omega$, and hence the asymptotic convergence of Krylov methods. In particular, \eqref{eq:weightedsum} implies
\[
\|\mtx{U}^\top\vct{r}_k\|_2
\lesssim
\theta(\Omega)^k
\|\mtx{U}^\top\vct{r}_0\|_2,
\]
so that $\theta(\Omega)$ determines the exponential rate of LSQR convergence.

A key feature is that $\theta(\Omega)$ decreases as the gap $a_3-a_2$ between the intervals increases~\cite[Theorem~3]{schiefermayr2013estimatesasymptoticconvergence}. Intuitively, a larger gap allows polynomials to decay more rapidly between the clusters, leading to faster convergence. Thus, spectral clustering can significantly accelerate Krylov methods even when the global condition number remains large.

In the LSQR setting, this corresponds to
\[
\sigma_1 \ge \cdots \ge \sigma_\ell \gg \sigma_{\ell+1} \ge \cdots \ge \sigma_n,
\]
so that $\sigma(\mtx{A}^\top\mtx{A})$ splits into two well-separated clusters. In particular, empirical results in~\cite{gergelits2014composite,carson2024towards} suggest that when the dominant singular values are tightly clustered, the effective convergence rate depends mainly on the conditioning of the trailing singular values rather than on the global condition number. This motivates the derivation of a bound for \cref{eq:weightedsum} in terms of the \emph{relative condition number} $\kappa_{\ell+1} = {\sigma_{\ell+1}}/{\sigma_n}$.

\begin{lemma}[Two-Interval Bound]\label{thm:cvgbound_disjoint}
    Let $\mtx{A} \in \F^{m \times n}$ have singular values $\sigma_1 \geq \cdots \geq \sigma_n$ such that $\sigma_1^2, \ldots, \sigma_\ell^2$ lie in an interval of width at most $w$, and $\sigma_{\ell+1}^2, \ldots, \sigma_n^2$ lie in a disjoint interval of width $w$. Then the $k$-th LSQR iterate satisfies
    \begin{align}\label{eq:convergence_quadbound}
        \|\mtx{U}^\top\vct{r}_k\|_2 
        \leq \left(\frac{\sqrt{C}-1}{\sqrt{C}+1}\right)^{\lfloor k/2 \rfloor}
        \|\mtx{U}^\top\vct{r}_0\|_2,
        \quad C := \kappa_{\ell+1}^2 + \frac{w}{\sigma_n^2}.
    \end{align}
\end{lemma}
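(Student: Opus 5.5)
We start from the polynomial characterization \eqref{eq:weightedsum}: it is enough to exhibit one polynomial $p$ of degree at most $k$ with $p(0)=1$ whose absolute value on $\sigma(\mtx{A}^\top\mtx{A})$ is at most $\left(\frac{\sqrt{C}-1}{\sqrt{C}+1}\right)^{\lfloor k/2\rfloor}$. Set $j=\lfloor k/2\rfloor$; since a degree-$2j$ polynomial lies in $\mathcal{P}_k$, we may assume $k=2j$.

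Write the two intervals as $I_1=[a_1,a_1+w]$ for the trailing values and $I_2=[a_3,a_3+w]$ for the leading values. Here $a_1\le\sigma_n^2$ and $a_1+w\ge\sigma_{\ell+1}^2$; we may take $a_1=\sigma_n^2$ and $a_3\ge\sigma_{\ell+1}^2$. The idea is to \emph{fold} the two intervals onto a single one with a quadratic. Put $c=(a_1+a_3+w)/2$, the midpoint between the two interval centres, and let $q(x)=(x-c)^2$. Because $q$ is symmetric about $c$, it maps $I_1$ and $I_2$ onto the same interval $J=[d^2,(d+w)^2]$ with $d=(a_3-a_1-w)/2\ge0$, which is half the gap. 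A degree-$2j$ candidate is then $p(x)=T_j\big(\ell(q(x))\big)/T_j\big(\ell(q(0))\big)$, where $\ell$ is the affine map of $J$ onto $[-1,1]$ and $T_j$ is the Chebyshev polynomial. The standard Chebyshev estimate gives $\max_{\Omega}|p|\le 2\rho^{j}$ or better, where $\rho$ is computed from the effective condition number $\tilde C$ of the folded problem. Concretely, $\tilde C$ is obtained from the ratio $(q(0)-d^2)/(q(0)-(d+w)^2)$, with $q(0)=c^2$ lying to the right of $J$.

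The main computation is to show $\tilde C\le C=\kappa_{\ell+1}^2+w/\sigma_n^2$. We have $c^2-d^2=(c-d)(c+d)=(a_1+w)a_3$ and $c^2-(d+w)^2=(c-d-w)(c+d+w)=a_1(a_3+w)$. Hence the ratio equals
\[
\frac{(a_1+w)a_3}{a_1(a_3+w)}\le \frac{a_1+w}{a_1}=1+\frac{w}{\sigma_n^2}.
\]
This is at most $C$, and in fact it looks even sharper than the claim. We would double-check that the claim's $\kappa_{\ell+1}^2$ term absorbs any slack coming from choosing $a_1+w\ge\sigma_{\ell+1}^2$: if the trailing interval is taken exactly as $[\sigma_n^2,\sigma_{\ell+1}^2]$, its width is at most $w$, and it can be enlarged to width $w$ or replaced by a bound of the form $\sigma_{\ell+1}^2/\sigma_n^2+w/\sigma_n^2$. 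Either route yields $\tilde C\le C$. The function $x\mapsto(\sqrt x-1)/(\sqrt x+1)$ is monotone increasing, so replacing $\tilde C$ by $C$ only weakens the bound.

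The main obstacle is the constant. The Chebyshev bound \eqref{eq:cvgbound_chebyshev} carries a factor $2$, whereas the lemma has none. To remove it we would use the exact value $1/|T_j(\gamma)|$ with $\gamma=\frac{\tilde C+1}{\tilde C-1}$. Writing $\rho=\frac{\sqrt{\tilde C}-1}{\sqrt{\tilde C}+1}$, one has $T_j(\gamma)=\tfrac12(\rho^{-j}+\rho^{j})$, so $1/|T_j(\gamma)|=2\rho^j/(1+\rho^{2j})$. This is at most $\rho^j$ only when $\rho^{2j}\ge1$, which is false, so the direct route fails. The first fix we would try exploits the slack found above: $\tilde C\le 1+w/\sigma_n^2$ is strictly smaller than $C$ by at least $\kappa_{\ell+1}^2-1$. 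The hope is that $2\tilde\rho^{\,j}\le\rho_C^{\,j}$ holds, at least for $j\ge1$ under mild conditions. Failing that, we would accept the bound with the factor $2$, noting that it matches \eqref{eq:cvgbound_chebyshev}, and state that the exponent $\lfloor k/2\rfloor$ reflects the degree doubling caused by the quadratic folding.
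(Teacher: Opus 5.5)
The gap is the one you flagged, and it cannot be closed: the statement as written is false for small $k$. The correct result is the bound with a factor $2$, which your fallback gives.

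\textbf{Same construction as the paper.} Folding about $(a_1+a_3+w)/2$ and then mapping $J$ affinely onto $[-1,1]$ gives, up to sign, the paper's $q_2(\lambda)=1+2(\lambda-b)(\lambda-c)/(ad-bc)$, with $a=a_1$, $b=a_1+w$, $c=a_3$, $d=a_3+w$. Your $\tilde C=(a_1+w)a_3/\bigl(a_1(a_3+w)\bigr)$ is the paper's $bc/ad$. Your estimate $\tilde C\le 1+w/\sigma_n^2\le C$ is correct and simpler than the paper's chain.

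\textbf{The paper's proof drops the factor $2$.} Write $j=\lfloor k/2\rfloor$ and $\tilde\rho=(\sqrt{\tilde C}-1)/(\sqrt{\tilde C}+1)$. The paper reaches the factor-free bound only through its final step $1/\bigl|T_j\bigl(\tfrac{bc+ad}{bc-ad}\bigr)\bigr|\le\tilde\rho^{\,j}$. That step is false, because the left side equals $2\tilde\rho^{\,j}/(1+\tilde\rho^{\,2j})$.

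\textbf{Slack in $C$ cannot rescue it.} The statement itself fails at $k=2$.
\begin{itemize}
\item Take $\mtx{A}=\mathrm{diag}(\sqrt{101},\sqrt2,1)$, $\ell=1$, $w=1$, with intervals $[100,101]$ and $[1,2]$. Then $C=3$, and the claimed factor is $2-\sqrt3\approx 0.268$.
\item Let $p^\ast$ be the quadratic with $p^\ast(0)=1$ and $p^\ast(1)=-p^\ast(2)=p^\ast(101)=E$; then $E=99/301\approx 0.329$.
\item $p^\ast$ is minimax on $\{1,2,101\}$. Any other admissible quadratic differs from it by $x(\alpha+\beta x)$, which changes sign at most once on $(0,\infty)$. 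So it cannot have modulus below $E$ at all three points.
\item Take $\vct{x}_0=\vct{0}$ and $\vct{b}=(\sqrt2,\sqrt{10100},\sqrt{19998})^\top$. One checks $\sum_i b_i^2p^\ast(\sigma_i^2)\sigma_i^2=\sum_i b_i^2p^\ast(\sigma_i^2)\sigma_i^4=0$.
\item These are the optimality conditions for the weighted problem $\min_p\sum_i b_i^2p(\sigma_i^2)^2$ that LSQR solves. Hence $\|\mtx{U}^\top\vct{r}_2\|_2=\tfrac{99}{301}\|\mtx{U}^\top\vct{r}_0\|_2$, which exceeds the claimed bound.
\end{itemize}

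\textbf{What is actually provable.} Your argument, and the paper's, yields
\[\|\mtx{U}^\top\vct{r}_k\|_2\le 2\bigl(\tfrac{\sqrt C-1}{\sqrt C+1}\bigr)^{\lfloor k/2\rfloor}\|\mtx{U}^\top\vct{r}_0\|_2,\]
consistent with \eqref{eq:cvgbound_chebyshev}. Your fallback is the correct statement, not a concession.

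\textbf{One step still needs repair: the normalization $a_1=\sigma_n^2$.} You need $a_1+w\le a_3\le\sigma_\ell^2$, not merely $a_3\ge\sigma_{\ell+1}^2$. Moving the trailing interval to $[\sigma_n^2,\sigma_n^2+w]$ can make it overlap the leading cluster when $\sigma_\ell^2<\sigma_n^2+w$. The hypotheses allow this, for example a wide leading cluster just above a tight trailing one. The paper's choice $b=\sigma_n^2+w$, $c=\sigma_\ell^2$ tacitly needs the same $b\le c$. Split into two cases:
\begin{itemize}
\item If $\sigma_n^2+w\le\sigma_\ell^2$, your intervals $[\sigma_n^2,\sigma_n^2+w]$ and $[\sigma_\ell^2,\sigma_\ell^2+w]$ are admissible; touching is harmless.
\item Otherwise $x:=\sigma_1^2/\sigma_n^2<1+2u$ with $u:=w/\sigma_n^2$, and the one-interval Chebyshev bound suffices. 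With $\rho_t=(\sqrt t-1)/(\sqrt t+1)$, one has $\rho_x^2=\rho_y$ for $y=(x+1)^2/(4x)\le(1+u)^2/(1+2u)\le 1+u\le C$. Hence $2\rho_x^{k}\le 2\rho_C^{\lfloor k/2\rfloor}$.
\end{itemize}
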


\begin{proof}[Sketch of proof]
The proof constructs a polynomial adapted to the two-interval geometry by composing a quadratic mapping with a Chebyshev polynomial; see \cref{app:twointervalbound} for details.
\end{proof}

\noindent
\cref{thm:cvgbound_disjoint} formalizes the intuition that when the dominant singular values are tightly clustered, the convergence rate is governed primarily by the trailing spectrum through $\kappa_{\ell+1}$, rather than by the global condition number.

Overall, this perspective shows that LSQR convergence is controlled by the spectral distribution, and in particular that clustering the dominant singular values can significantly accelerate convergence even when the global condition number remains large. This observation motivates preconditioning strategies that forms such spectral structure.

\section{High-level Overview of \mainalg{}}\label{sec:highlevelalg}
\subsection{Low-rank spectral preconditioner}
The preceding analysis in \cref{sec:spectralclustering} suggests that an effective preconditioner does not need to reshape the entire spectrum; rather, it suffices to flatten the dominant singular values to a common level~\cite{coulaud2013deflation,gaul2013framework,gutknecht2012spectral}. Our goal is to construct a rank-$\l$ preconditioner $\mtx{P}_{\l,\mu} \in \F^{n \times n}$ such that the $\l$ largest singular values of $\mtx{A}_\mu\mtx{P}_{\l,\mu}^{-1}$ are approximately equal.

In the ideal setting where the rank-$\l$ truncated SVD $\lfloor{\mtx{A}}\rfloor_\l = \mtx{U}_\l \mtx{\Sigma}_\l \mtx{V}_\l^\top$ is available, the optimal rank-$\l$ preconditioner for the regularized linear least-squares problem~\cref{eq:augLLS} is given by
\begin{align*}
    \mtx{P}_{\l,\mu}^\star = \frac{1}{\sqrt{\sigma_{\l+1}^2+\mu^2}} \mtx{V}_\l (\mtx{\Sigma}_\l^2 +\mu^2 \mtx{I}_\l)^{1/2}\mtx{V}_\l^\top + (\mtx{I}-\mtx{V}_\l\mtx{V}_\l^\top).
\end{align*}
This collapses the first $\l$ regularized dominant singular values to the level $\sqrt{\sigma_{\l+1}^2+\mu^2}$, achieving the best possible condition number reduction for a rank-$\ell$ modification:
\begin{align*}
    \kappa_2(\mtx{A}_\mu) = \sqrt{\frac{\sigma_1^2 + \mu^2}{\sigma_n^2+\mu^2}}\quad \rightarrow \quad\kappa_2(\mtx{A}_\mu \mtx{P}_\star^{-1}) = \sqrt{\frac{\sigma_{\l+1}^2 + \mu^2}{\sigma_n^2 + \mu^2}}.
\end{align*}

Since computing the truncated SVD is infeasible for large-scale problems, we approximate $\mtx{P}_{\l,\mu}^\star$ using a randomized \emph{CUR approximation}. CUR approximation is particularly well suited to our adaptive setting because it supports efficient incremental rank updates~\cite{pritchard2025fast} and preserves structural properties such as sparsity by selecting actual rows and columns of $\mtx{A}$ (see \cref{sec:whycur} for details).

\begin{definition}[Rank-$\l$ CUR approximation~\cite{zamarashkin2018existence,osinsky2025close}]\label{def:cur}
    A rank-$\l$ CUR approximation $\widehat{\mtx{A}}_\ell$ of matrix $\mtx{A}$ with \emph{target rank} $\l$ is given by
    \begin{equation}\label{eq:cur}
        \mtx{A} \approx \widehat{\mtx{A}}_\l = \mtx{C}\mtx{U}\mtx{R},
    \end{equation}
    where $\mtx{C}=\mtx{A}(:,\vct{J})\in\F^{m \times \l}$ and $\mtx{R}=\mtx{A}(\vct{I},:)\in\F^{\l \times n}$ 
    consist of $\l$ selected columns and rows of $\mtx{A}$ indexed by the sets $\vct{J} \subseteq \{1, \ldots, n\}$ and $\vct{I} \subseteq \{1, \ldots, m\}$, respectively. The \textit{core matrix} $\mtx{U} = \mtx{A}(\vct{I},\vct{J})^\dagger\in \F^{\l \times \l}$ is chosen to ensure a small approximation error.
\end{definition}

\noindent
CUR provides powerful approximation: for any matrix and rank $\ell$, there exists a rank-$\ell$ CUR whose error is within a factor $(\ell+1)$ of the optimal, truncated SVD in the Frobenius norm, indicating the sacrifice in accuracy by choosing CUR is minimal, and practical algorithms are now available~\cite{osinsky2025close}.

By leveraging the low-rank structure of the CUR decomposition, we can efficiently compute the rank-$\l$ truncated SVD of the approximation, $\widehat{\mtx{A}}_\l = \widehat{\mtx{U}}\Lhat\Vhat^\top$. Using these spectral components, we define the rank-$\l$ CUR preconditioner:
\begin{align}\label{eq:preconditioner2}
    \mtx{P}_{\l,\mu} = \frac{1}{\svhat}\Vhat(\Lhat^2 + \mu^2 \mtx{I})^{1/2}\Vhat^\top + (\mtx{I} - \Vhat \Vhat^\top),
\end{align}
where $\l$ is a \textit{target rank} and $\svhat > 0$ is a prescribed \textit{target level}. Note that when $\Vhat = \mtx{V}_\l$ and $\svhat = \sqrt{\svhat_{\l+1}^2 + \mu^2}$, $\mtx{P}_{\l,\mu}$ recovers the optimal preconditioner $\mtx{P}_{\l,\mu}^\star$.


\subsection{Adaptive strategy}\label{sec:adaptivity}
The effectiveness of a low-rank spectral preconditioner depends critically on the choice of rank $\ell$. 
If $\ell$ is too small, the dominant singular components remain insufficiently flattened, leading to slow convergence. 
If $\ell$ is too large, unnecessary spectral modification may destroy useful separation between dominant and trailing singular values and incur avoidable computational cost; see \cref{sec:numericalstudies} for numerical evidence. 
Since the appropriate rank is problem-dependent and typically unknown a priori, fixing $\ell$ in advance is inherently unreliable.

To address this, we introduce an adaptive framework with two components: \emph{rank adaptivity}, which determines how much spectral correction to apply, and \emph{schedule adaptivity}, which determines when to apply it. 
Together, these enable the preconditioner to be refined progressively during the solve rather than fixed upfront, allowing the method to start with a modest-rank preconditioner and refine it only when additional spectral correction becomes beneficial. This approach amortizes setup costs over the iterations while maintaining rapid convergence.


\subsubsection{Rank adaptivity: what to precondition}

The first form of adaptivity concerns the choice of rank $\l$. In many large-scale problems, the numerical rank of $\mtx{A}$ is not known a priori and may vary across applications, making a fixed choice of $\l$ unreliable. We therefore develop a rank-adaptive approach that incrementally refines the spectral correction without prior knowledge of the target rank.

Specifically, we construct a sequence of CUR approximations
\begin{align*}
\{\widehat{\mtx{A
}}_{\l}\}_{\l = b, 2b, 3b, \ldots}
\end{align*}
where the target rank increases in blocks of size $b$. The rank is grown until a prescribed approximation accuracy is reached.

For computational efficiency, the preconditioner is not updated at every rank increment. Instead, updates are triggered only when the improvement in approximation accuracy indicates that additional spectral flattening is likely to yield meaningful convergence gains. This results in a subsequence
$\{\widehat{\mtx{A}}_{\l_i}\}_{i=1}^p$
used to construct a sequence of CUR preconditioners
$$\{\mtx{P}_{\l_i,\mu}\}_{i=1}^p,$$
where $\l_i$ denotes the target rank at the $i$-th update and $p$ is the total number of preconditioner updates.


This strategy avoids both under-preconditioning, where too few dominant components are corrected, and over-preconditioning, where excessive rank growth incurs unnecessary cost and may even impair convergence.
Rank adaptivity thus determines how many leading spectral components should be modified by our preconditioners.

To form CUR approximations of increasing ranks, we employ the randomized \textit{IterativeCUR} method of Pritchard \textit{et al.}~\cite{pritchard2025fast} (see \cref{alg:cur_icur}), which progressively constructs CUR approximations from a single sketch $\mtx{SA}$ computed once at initialization, where $\mtx{S} \in \mathbb{F}^{b \times m}$ is a random embedding, such as Gaussian matrices (see \cref{app:cur_details}).
The approximation is refined incrementally by selecting additional rows and columns based on the sketched residual $\mtx{SA} - \mtx{SCUR}$, reusing the same sketch throughout. This enables rank refinement without additional sketching cost, ensuring low overhead at each update and making the approach well-suited to the adaptive framework. Importantly, the sketch dimension $b$ is independent of the final target rank.

\subsubsection{Schedule adaptivity: when to precondition}

The second form of adaptivity concerns when preconditioner updates should occur. In contrast to static approaches that construct a complete preconditioner before solving, we interleave preconditioner updates with LSQR iterations, amortizing setup cost across the solve.

This strategy is supported by the spectral behavior of Krylov methods. LSQR reduces error components associated with large singular values in early iterations, while components associated with smaller singular values become relevant only later~\cite{paige1982lsqr,bjorck1996numericalmethods}. Accordingly, only a small number of dominant spectral components need to be corrected initially, whereas additional spectral refinement becomes relevant as convergence progresses.

%

Starting from an initial guess $\vct{x}_0$, with phase index $i=1$ and initial rank $\l_1=b$, we proceed as follows: (i) construct a rank-$\l_i$ CUR approximation $\widehat{\mtx{A}}_{\l_i}$; (ii) form the corresponding preconditioner $\mtx{P}_{\l_i,\mu}$; (iii) run preconditioned LSQR (PLSQR) while monitoring convergence; and (iv) when convergence slows, set $\vct{x}_i$ to the current iterate, increment $i \gets i+1$, update the CUR approximation until the new target rank $\l_i$, and repeat (ii)–(iv). 

The algorithm terminates after step (iii) once the CUR approximation meets the prescribed accuracy. Each LSQR phase is warm-started from the previous iterate:
\begin{align*}
    \vct{x}_i = \vct{x}_{i-1} + \mathrm{PLSQR}(\mtx{A}_\mu, \mtx{P}_{\l_i,\mu}, \vct{b}_{\mathrm{aug}} - \mtx{A}_\mu \vct{x}_{i-1}).
\end{align*}
Fortunately, due to the residual-minimizing property of LSQR, the residual remains monotonically nonincreasing throughout the LSQR iterations even with the warm-restarts. Additionally, this process fortuitously performs LSQR iterative refinement, which has shown to be critical for achieving stability in preconditioned LSQR~\cite{epperly2024fastforwardstablerandomized,epperly2026fast}.

A modest-rank preconditioner is often sufficient to accelerate early iterations, but its effectiveness diminishes once the corrected singular components have been fully exploited. Schedule adaptivity detects this slowdown and introduces additional spectral correction only when needed.

Together, rank adaptivity and schedule adaptivity allow the preconditioner to evolve alongside the Krylov iterations, ensuring that computational effort scales naturally with the requested accuracy.



\subsection{Advantages of the CUR approximation}\label{sec:whycur}
While other randomized low-rank approximation methods, such as randomized SVD \cite{halko2011finding} or generalized Nyström methods \cite{nakatsukasa2020fast, tropp2017practical} are powerful, the CUR approximation in \eqref{eq:cur} offers unique advantages that align specifically with the requirements of our adaptive and efficient framework.

Most importantly, CUR is naturally suited for incremental rank growth. Increasing the approximation rank simply involves augmenting the existing row and column sets, thereby reusing previous computations rather than restarting from scratch. 
Furthermore, by selecting actual rows and columns of $\mtx{A}$ to form the bases $\mtx{C}$ and $\mtx{R}$, CUR inherits the intrinsic structural properties of the original matrix, such as sparsity and non-negativity. This preservation of structure often leads to more physically meaningful approximations and improved memory efficiency, as the decomposition can be represented by storing indices rather than dense full matrices.




\iftransfer
We employ a sketched CUR decomposition to construct low-rank approximations of $\mtx{A}$. While numerous CUR variants exist---differing in sketching methods, index selection strategies, core matrix construction, and rank choice---we focus on the specific variant used in our implementation. A comprehensive overview of alternative approaches is provided in \cref{app:cur_details}.

To identify informative rows and columns efficiently, we adopt a sketched pivoting-based strategy. Specifically, we first construct a left sketch of $\mtx{A}$ using a sparse sign embedding~\cite{nelson2013osnap}, and then perform LU decomposition with partial pivoting (LUPP) on the transpose of the sketched matrix to determine column indices $\vct{J}$. A second LUPP is applied to the selected columns to determine row indices $\vct{I}$. This yields $\mtx{C}=\mtx{A}(:, \vct{J})$ and $\mtx{R}=\mtx{A}(\vct{I}, :)$. We select LUPP since it is empirically efficient and parallelizable~\cite{geist1988lu, kurzak2012lu, demmel2010communication, demmel2010communication, solomonik2011communication}.

For the core matrix, we adopt the cross-approximation variant (CUR-CA), setting $\mtx{U} = \mtx{A}(\vct{I}, \vct{J})^\dagger$. This choice requires evaluating $\mtx{A}$ only along the intersection of the selected rows and columns, which is advantageous in settings where full access to $\mtx{A}$ is costly or impractical. While CUR with best approximation (CUR-BA) provides stronger theoretical error guarantees, CUR-CA offers improved computational efficiency with acceptable error bounds~\cite{sorensen2016deim, dong2023simpler, osinsky2025close}. Additional details, theoretical comparisons, and stabilization techniques for CUR-CA are discussed in \cref{app:cur_details}. The full procedure is illustrated in \cref{alg:cur}.
\fi

\subsection{Overview of the main algorithm}

We now summarize the high-level structure of the proposed method, \mainalg{}.
\cref{fig:aplicur-flow} illustrates the overall workflow. 
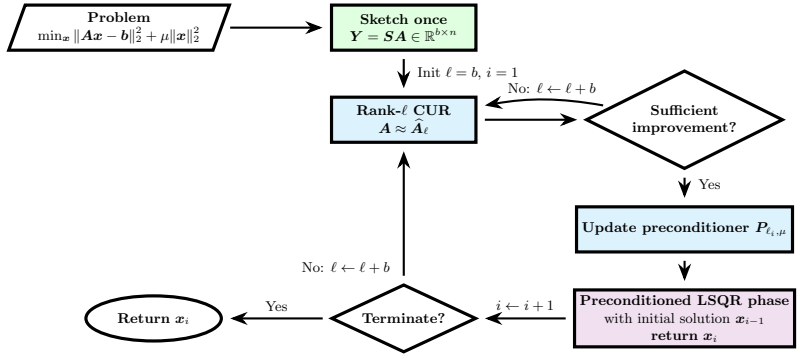
\begin{figure}
    \centering
    \begin{tikzpicture}[scale=0.6, transform shape, node distance=12mm and 24mm, every node/.style={font=\small}]
        \node[io] (input) {\textbf{Problem}\\$\min_{\vct{x}} \|\mtx{Ax}-\vct{b}\|_2^2+ \mu\|\vct{x}\|_2^2$};
        \node[process, right=of input, fill=green!12] (sketch) {\textbf{Sketch once}\\$\mtx{Y} = \mtx{SA} \in \mathbb{R}^{b \times n}$};
        \draw[arrow] (input) -- (sketch);
        \node[process, below=10mm of sketch, fill=cyan!12] (cur) {\textbf{Rank-$\l$ CUR}\\$\mtx{A} \approx \mtx{\widehat{A}}_\l$};
        \draw[arrow] (sketch) -- node[midway, right, xshift=5pt] {Init $\l = b$, $i=1$}(cur); 
        \node[decision, right=of cur] (decide) {\textbf{Sufficient}\\\textbf{improvement?}};
        \draw[arrow] (cur) -- (decide);
        \node[process, below=8mm of decide, fill=cyan!12] (precondition) {\textbf{Update preconditioner }$\mtx{P}_{\l_i,\mu}$};
        \draw[arrow] (decide) -- node[pos=0.4, right, xshift=5pt] {Yes} (precondition);
        \draw[arrow] (decide) to[bend right=10] node[midway,above, xshift=5pt] {No: $\l \gets \l+ b$} (cur);
        \node[process, below=8mm of precondition, fill=violet!12] (lsqr) {\textbf{Preconditioned LSQR phase}\\with initial solution $\vct{x}_{i-1}$ \\\textbf{return} $\vct{x}_i$};
        \draw[arrow] (precondition) -- (lsqr);
        \node[decision] (termination) at ($(cur |- lsqr)$) {\textbf{Terminate?}};
        \draw[arrow] (lsqr) -- node[midway,above, xshift=0pt, yshift=1pt]{$i \gets i+1$} (termination);
        \node[startstop, left=of termination] (stop) {\textbf{Return} $\vct{x}_i$};
        \draw[arrow] (termination) -- node[midway, above, yshift=1pt] {Yes} (stop);
        \draw[arrow] (termination) to[bend left=00] node[pos=0.1, left, xshift=-5pt] {No: $\l \gets \l+ b$} (cur);
    \end{tikzpicture}
    \caption{Workflow of \mainalg. A single sketch $\mtx{Y}=\mtx{SA}$ is computed once and reused to select the next row and column indices in the CUR updates. The algorithm alternates between (i) CUR updates and conditional preconditioner construction, and (ii) warm-started preconditioned LSQR phases. No additional sketching is required as the rank grows.}
    \label{fig:aplicur-flow}
\end{figure}
After forming a sketch $\mtx{SA}$ once at the initialization, \mainalg{} alternates between two phases:
\begin{itemize}
    \item \textbf{Preconditioning phase.} The CUR approximation is incrementally refined by increasing the target rank in blocks of size $b$, improving the approximation of $\mtx{A}$ without any resketching. When the update criteria are satisfied, the preconditioner $\mtx{P}_{\l,\mu}$ is updated accordingly.
    \item \textbf{Solving phase.} LSQR is applied to \eqref{eq:augLLS} using the current preconditioner, warm-started from the previous iterate. The number of LSQR iterations per phase is determined by dynamic stopping criteria that monitor changes in the convergence rate.
\end{itemize}
These phases are interleaved until the CUR approximation reaches the prescribed accuracy, at which point the algorithm terminates.

We present a high-level pseudocode description of the proposed method in \cref{alg:aplicur_highlevel}. This highlights the interaction between rank adaptivity and schedule adaptivity. A detailed implementation, including stopping criteria and efficient updates, is given in \cref{alg:aplls}.
\begin{algorithm}
\caption{APLICUR (generic algorithmic framework)}
\label{alg:aplicur_highlevel}
\begin{algorithmic}[1]
\Require $\mtx{A} \in \mathbb{R}^{m \times n}$, $\vct{b} \in \mathbb{R}^m$, regularization $\mu \ge 0$, block size $b$
\Ensure Approximate solution $\mtx{x}$

\State Compute sketch $\mtx{Y} = \mtx{SA}$
\State Initialise $\vct{x}_0 = \vct{0}$, $\l_1 = 0$, $i = 1$

\Repeat
    \Repeat
        \State $\l_i \gets \l_i + b$
        \State Construct CUR approximation $\widehat{\mtx{A}}_{\l_i}$ using $\mtx{Y}$
    \Until{CUR approximation has sufficiently improved}
    \State Form preconditioner $\mtx{P}_{\l_i,\mu}$ from $\widehat{\mtx{A}}_{\l_i}$ via~\cref{eq:preconditioner2}
    \State Start running preconditioned LSQR with $\mtx{P}_{\l_i,\mu}$, warm-started at $\vct{x}_{i-1}$
    \If{convergence slows}
        \State Let $\vct{x}_i$ be the current LSQR 
        iterate
        \State $i \gets i + 1$ and $\l_i \gets \l_{i-1}$
    \EndIf
\Until{desired accuracy achieved}

\State \Return $\vct{x}_i$
\end{algorithmic}
\end{algorithm}

\section{Theoretical Analysis}\label{sec:theory}

In this section, we establish convergence guarantees for the adaptively preconditioned LSQR iterations within the APLICUR framework (see \cref{alg:aplicur_highlevel}). The method applies a sequence of preconditioners $\{\mtx{P}_{\l_i,\mu}\}_{i=1}^p$ over $p$ successive LSQR phases to the augmented linear least-squares problem in \eqref{eq:augLLS}, progressively improving its conditioning.

Our analysis characterizes how the condition number and spectral structure evolve across phases, and shows how this adaptive strategy accelerates convergence.

\subsection{Singular value bounds for the preconditioned system}
We begin by quantify the spectral effects induced by the CUR-based preconditioner $\mtx{P}_{\l,\mu}$, through bounds on the singular values of $\mtx{A}\mtx{P}_{\l,\mu}^{-1}$.

\begin{theorem}[Condition Number Bound]\label{thm:conditionnumberbound}
    Let $\mtx{\widehat A}_\l$ be a rank-$\ell$ CUR approximation of $\mtx A$ with residual $\mtx E = \mtx A - \mtx {\widehat A}_\l$, and define the augmented matrix
    $$\mtx{A}_\mu = \begin{bmatrix} \mtx{A} \\ \mu \mtx{I} \end{bmatrix}, \qquad \mu > 0.$$
    Let $\mtx{P}_{\l,\mu}$ be the preconditioner in \eqref{eq:preconditioner2} constructed from $\mtx{\widehat A}_\l$ with target level $\svhat$ satisfying $\svhat^2 \le \svhat_\l^2 + \mu^2$.
    Assuming $\|\mtx{E}\|_2 < \mu$, we have
    \[\sigma_{\max}(\mtx A_\mu \mtx{P}_{\l,\mu}^{-1}) \le \sqrt{\widehat\sigma^2 + \mu^2} + \|\mtx E\|_2,
    \qquad
    \sigma_{\min}(\mtx A_\mu \mtx{P}_{\l,\mu}^{-1}) \ge \mu - \|\mtx E\|_2, \]
    and therefore
    \[ \kappa(\mtx A_\mu \mtx{P}_{\l,\mu}^{-1})
    \le \frac{\sqrt{\widehat\sigma^2 + \mu^2} + \|\mtx E\|_2} {\mu - \|\mtx E\|_2}. \]
\end{theorem}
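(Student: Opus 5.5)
The plan is to split the augmented matrix into an exactly preconditionable part plus a small perturbation, and then apply Weyl's inequality for singular values. Write
\[
\mtx{A}_\mu = \begin{bmatrix} \widehat{\mtx{A}}_\l \\ \mu \mtx{I}\end{bmatrix} + \begin{bmatrix} \mtx{E} \\ \mtx{0}\end{bmatrix} =: \widehat{\mtx{A}}_\mu + \mtx{E}_{\mathrm{aug}},
\]
so that $\mtx{A}_\mu\mtx{P}_{\l,\mu}^{-1} = \widehat{\mtx{A}}_\mu\mtx{P}_{\l,\mu}^{-1} + \mtx{E}_{\mathrm{aug}}\mtx{P}_{\l,\mu}^{-1}$. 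Weyl's inequality gives $|\sigma_i(\mtx{A}_\mu\mtx{P}_{\l,\mu}^{-1}) - \sigma_i(\widehat{\mtx{A}}_\mu\mtx{P}_{\l,\mu}^{-1})| \le \|\mtx{E}\|_2\,\|\mtx{P}_{\l,\mu}^{-1}\|_2$. Two facts then remain to be established.

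\emph{Step 1: $\|\mtx{P}_{\l,\mu}^{-1}\|_2\le 1$.} Let $\Vhat=[\Vhat,\Vhat_\perp]$ be completed to an orthonormal basis. In this basis $\mtx{P}_{\l,\mu}$ is diagonal. Its entries are $\sqrt{\svhat_i^2+\mu^2}/\svhat$ on $\mathrm{range}(\Vhat)$ and $1$ on the complement. The hypothesis $\svhat^2\le\svhat_\l^2+\mu^2\le\svhat_i^2+\mu^2$ for $i\le\l$ makes every eigenvalue at least $1$, so $\mtx{P}_{\l,\mu}^{-1}$ is SPD with norm at most $1$. The perturbation is therefore bounded by $\|\mtx{E}\|_2$.

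\emph{Step 2: the spectrum of the unperturbed part.} Let $\widehat{\mtx{A}}_\l=\widehat{\mtx{U}}\Lhat\Vhat^\top$. I will compute the Gram matrix
\[
\mtx{P}_{\l,\mu}^{-1}\bigl(\widehat{\mtx{A}}_\l^\top\widehat{\mtx{A}}_\l+\mu^2\mtx{I}\bigr)\mtx{P}_{\l,\mu}^{-1}.
\]
Using $\widehat{\mtx{A}}_\l^\top\widehat{\mtx{A}}_\l+\mu^2\mtx{I} = \Vhat(\Lhat^2+\mu^2\mtx{I})\Vhat^\top + \mu^2(\mtx{I}-\Vhat\Vhat^\top)$ and $\mtx{P}_{\l,\mu}^{-1} = \svhat\,\Vhat(\Lhat^2+\mu^2\mtx{I})^{-1/2}\Vhat^\top + (\mtx{I}-\Vhat\Vhat^\top)$, the product is $\svhat^2\Vhat\Vhat^\top+\mu^2(\mtx{I}-\Vhat\Vhat^\top)$. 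So the singular values of $\widehat{\mtx{A}}_\mu\mtx{P}_{\l,\mu}^{-1}$ are exactly $\svhat$ (with multiplicity $\l$) and $\mu$. If $\widehat{\mtx{A}}_\l$ has rank below $\l$, the zero singular directions are simply absorbed into the complement, and the same computation goes through.

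\emph{Step 3: combine.} The largest unperturbed singular value is $\max(\svhat,\mu)\le\sqrt{\svhat^2+\mu^2}$, which gives the upper bound after adding $\|\mtx{E}\|_2$. For the lower bound, the smallest unperturbed singular value is $\min(\svhat,\mu)$. This is the step I expect to be the main obstacle: it equals $\mu$ only when $\svhat\ge\mu$.

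I would handle this point by checking the intended normalization of the target level. Since the target is meant to be a regularized level, of the form $\sqrt{\svhat_{\l+1}^2+\mu^2}$, one has $\svhat\ge\mu$ in the intended regime. If needed, I would make that assumption explicit; equivalently, one can replace $\mu$ by $\min(\svhat,\mu)$ in the lower bound. With $\svhat\ge\mu$, Weyl gives $\sigma_{\min}\ge\mu-\|\mtx{E}\|_2$, which is positive because $\|\mtx{E}\|_2<\mu$. Dividing the two bounds then yields the condition number estimate.
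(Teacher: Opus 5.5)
Your route is the paper's route: the same splitting $\mtx{A}_\mu\mtx{P}_{\ell,\mu}^{-1}=\widehat{\mtx{A}}_{\ell,\mu}\mtx{P}_{\ell,\mu}^{-1}+\mtx{\Delta}$, the bound $\|\mtx{P}_{\ell,\mu}^{-1}\|_2\le 1$ from the target-level hypothesis, and Weyl's inequality. The one difference is Step 2, and there you are right and the paper is not.

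The paper asserts that the preconditioned Gram matrix equals $(\widehat\sigma^2+\mu^2)\mtx{I}_\ell$ on $\mathrm{range}(\widehat{\mtx{V}})$. That would make the unperturbed singular values $\sqrt{\widehat\sigma^2+\mu^2}$ and $\mu$, and the lower bound $\mu-\|\mtx{E}\|_2$ would be automatic. With $\mtx{P}_{\ell,\mu}$ as defined in \eqref{eq:preconditioner2}, however, the factors $(\widehat{\mtx{\Sigma}}^2+\mu^2\mtx{I})^{-1/2}$ in $\mtx{P}_{\ell,\mu}^{-1}$ cancel against $\widehat{\mtx{\Sigma}}^2+\mu^2\mtx{I}$ in the Gram matrix. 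The block is therefore $\widehat\sigma^2\mtx{I}_\ell$, exactly as you computed. The target level already carries the regularization (Algorithm~\ref{alg:preconditioner} sets $\widehat\sigma=\sqrt{\widehat\sigma_\ell^2+\mu^2}$), so the paper's proof counts $\mu$ twice.

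Consequently, the obstacle you flag is a defect of the statement, not of your argument. Without $\widehat\sigma\ge\mu$, the lower bound, and with it the condition-number bound, can fail. Take $\mtx{A}=\mathrm{diag}(1,0)$, $\ell=1$ and $\vct{I}=\vct{J}=\{1\}$, so that $\mtx{E}=\mtx{0}$. Choose $\mu=1$ and $\widehat\sigma=1/2$, which satisfies $\widehat\sigma^2\le\widehat\sigma_1^2+\mu^2$. Then $\mtx{A}_\mu\mtx{P}_{\ell,\mu}^{-1}$ has orthogonal columns of norms $1/2$ and $1$. Hence $\sigma_{\min}=1/2<\mu-\|\mtx{E}\|_2=1$, and $\kappa=2$ exceeds the claimed bound $\sqrt{5}/2$.

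Your repair is what is needed: either assume $\widehat\sigma\ge\mu$, which holds for the algorithm's choice, or replace $\mu$ by $\min(\widehat\sigma,\mu)$ in the lower bound. Your upper bound $\max(\widehat\sigma,\mu)+\|\mtx{E}\|_2$ is also sharper than the stated one and implies it.
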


\begin{proof}
    Let $\mtx {\widehat A}_\l = \mtx {\widehat U \widehat\Sigma \widehat V}^\top$ be the
    rank-$\ell$ truncated SVD, and define
    \[ \mtx {\widehat A}_{\l,\mu} = \begin{bmatrix} \mtx {\widehat A}_\l \\ \mu \mtx I \end{bmatrix},
    \qquad
    \Vhat_\mathrm{full} = \begin{bmatrix}  \Vhat & \Vhat_\perp \end{bmatrix}.\]
    By the construction of $\mtx{P}_{\l,\mu}$,
    \[ \mtx{P}_{\l,\mu}^{-\top}\mtx {\widehat A}_{\l,\mu}^\top \mtx {\widehat A}_{\l,\mu} \mtx{P}_{\l,\mu}^{-1}
    = \mtx{P}_{\l,\mu}^{-\top}(\mtx{\widehat{A}}_\l^\top \mtx{\widehat{A}}_\l+\mu^2 \mtx{I_n} ) \mtx{P}_{\l,\mu}^{-1}
    = \Vhat_\mathrm{full} \begin{bmatrix} (\svhat^2+\mu^2)\mtx{I}_\l & \\ & \mu^2 \mtx{I}_{n-\l} \end{bmatrix} \Vhat_\mathrm{full}^\top, \]
    so that 
    \[ \sigma_{\max}(\mtx {\widehat A}_{\l,\mu} \mtx{P}_{\l,\mu}^{-1}) = \sqrt{\widehat\sigma^2 + \mu^2},
    \qquad
    \sigma_{\min}(\mtx {\widehat A}_{\l,\mu} \mtx{P}_{\l,\mu}^{-1}) = \mu. \]
    
    Moreover,
    \[ \mtx A_\mu \mtx{P}_{\l,\mu}^{-1} = \mtx {\widehat A}_{\l,\mu} \mtx{P}_{\l,\mu}^{-1} + \begin{bmatrix} \mtx E \\ \mtx 0 \end{bmatrix} \mtx{P}_{\l,\mu}^{-1}
    =: \mtx {\widehat A}_{\l,\mu} \mtx{P}_{\l,\mu}^{-1} + \mtx\Delta, \]
    and since $\svhat^2 \le \svhat_\l^2 + \mu^2$, we have $\|\mtx{P}_{\l,\mu}^{-1}\|_2 \le 1$, and therefore $\|\mtx{\Delta}\|_2 \le \|\mtx{E}\|_2$.

    Weyl’s inequality for singular values then yields the stated bounds. 
    See \cref{proof:conditionnumberbound} for details.
\end{proof}
\noindent
Note that the condition number bound in \cref{thm:conditionnumberbound} depends only on the target level $\svhat$, the CUR approximation error $\|\mtx{E}\|_2$, and the regularization parameter $\mu$. Importantly, the sketch dimension does not appear in the bound.

\subsection{Convergence bound for adaptive CUR preconditioning}

We now analyze the convergence of \mainalg{} (\cref{alg:aplicur_highlevel}), which applies LSQR with a sequence of CUR-based preconditioners. We begin by considering a single preconditioned LSQR phase, and then extend the result to the full multi-phase adaptive procedure.

Let $\vct{r}_k := \mtx{A}_\mu \vct{x}_k - \vct{b}_{\text{aug}}$ denote the residual after the $k$-th LSQR iteration, and let \(\mtx{U}_\mu\) be the matrix of left singular vectors of \(\mtx{A}_\mu\). Applying the classical Chebyshev bound from \cref{eq:cvgbound_chebyshev} to the preconditioned system, together with \cref{thm:conditionnumberbound}, yields
\begin{align*}
    \frac{\|\mtx{U}_\mu^\top\vct{r}_k\|_2}{\|\mtx{U}_\mu^\top\vct{r}_0\|_2} \leq 2 \left(\frac{\kappa(\mtx A_\mu \mtx{P}_{\l,\mu}^{-1})-1}{\kappa(\mtx A_\mu \mtx{P}_{\l,\mu}^{-1})+1}\right)^k \leq 2 
    \left(
    1 - \frac{2\left(\mu - \|\mtx{E}\|_2\right)}
    {\mu + \sqrt{\svhat^2 + \mu^2}}
    \right)^{k}, \quad k \geq 0.
\end{align*}




Our main algorithm \mainalg{} (\cref{alg:aplicur_highlevel}) proceeds through multiple LSQR phases with progressively refined preconditioners. The following lemma captures the cumulative effect of these phases.
\begin{lemma}\label{thm:cvgbound_aplicur_naive}
    Consider \mainalg{} with $p$ LSQR phases. In phase $i$, we perform $k^{(i)}$ LSQR iterations using a preconditioner $\mtx{P}_{\ell_i,\mu}$ constructed from a rank-$\ell_i$ CUR approximation $\mtx{\widehat A}_{\ell_i}$ of $\mtx A$, with residual $\mtx E^{(i)} = \mtx A - \mtx{\widehat A}_{\ell_i}$, and target level $\widehat{\sigma}^{(i)}$ satisfying $(\widehat{\sigma}^{(i)})^2 \le \widehat{\sigma}_{\ell_i}^2 + \mu^2$. Assume that $\|\mtx E^{(i)}\|_2 < \mu$ for all $i=1,\ldots,p$.

    Then, after a total of $\sum_{i=1}^p k^{(i)}$ LSQR iterations, the residual satisfies
    \[
    \left\|\mtx{U}_\mu^\top\vct{r}_k\right\|_2
    \le
    2^p
    \prod_{i=1}^p
    \left(
    1 - \frac{2\bigl(\mu - \|\mtx{E}^{(i)}\|_2\bigr)}
    {\mu + \sqrt{(\widehat{\sigma}^{(i)})^2 + \mu^2}}
    \right)^{k^{(i)}}
    \|\mtx{U}_\mu^\top \vct{r}_0\|_2.
    \]
\end{lemma}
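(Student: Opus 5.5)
The plan is an induction over the phases that chains the single-phase bound stated just before the lemma. The central issue is that the quantity $\|\mtx{U}_\mu^\top \vct{r}\|_2$ does not depend on the preconditioner. Since $\vct{b}_{\text{aug}} - \mtx{A}_\mu\vct{x}_\ast$ is orthogonal to $\mathrm{range}(\mtx{A}_\mu)=\mathrm{range}(\mtx{U}_\mu)$, we have $\mtx{U}_\mu^\top \vct{r} = \mtx{U}_\mu^\top \mtx{A}_\mu(\vct{x}-\vct{x}_\ast)$. Hence $\|\mtx{U}_\mu^\top \vct{r}\|_2 = \|\mtx{A}_\mu(\vct{x}-\vct{x}_\ast)\|_2$, which is intrinsic to the iterate $\vct{x}$.

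For each phase I will also use that $\mtx{P}_{\l_i,\mu}$ is invertible (it is symmetric positive definite). Therefore $\mathrm{range}(\mtx{A}_\mu \mtx{P}_{\l_i,\mu}^{-1}) = \mathrm{range}(\mtx{A}_\mu)$, and the projected residual for the preconditioned problem $\min_{\vct{y}}\|\mtx{A}_\mu\mtx{P}_{\l_i,\mu}^{-1}\vct{y}-\vct{b}_{\text{aug}}\|_2$ equals the same quantity evaluated at $\vct{x}=\mtx{P}_{\l_i,\mu}^{-1}\vct{y}$. Here I am using the left singular vectors of $\mtx{A}_\mu\mtx{P}_{\l_i,\mu}^{-1}$, which span the same space, so the projected norms agree.

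\textbf{Step 1 (one phase).} Phase $i$ runs $k^{(i)}$ iterations of PLSQR on the correction problem with right-hand side $\vct{b}_{\text{aug}}-\mtx{A}_\mu\vct{x}_{i-1}$, starting from zero. This is equivalent to running PLSQR on the original problem warm-started at $\vct{x}_{i-1}$. The Chebyshev bound \eqref{eq:cvgbound_chebyshev} then applies to the preconditioned matrix. Combining it with \cref{thm:conditionnumberbound}, under the hypotheses $\|\mtx{E}^{(i)}\|_2<\mu$ and $(\svhat^{(i)})^2\le\svhat_{\l_i}^2+\mu^2$, gives
\[
\|\mtx{U}_\mu^\top \vct{r}^{(i)}\|_2 \le 2\left(1-\tfrac{2(\mu-\|\mtx{E}^{(i)}\|_2)}{\mu+\sqrt{(\svhat^{(i)})^2+\mu^2}}\right)^{k^{(i)}}\|\mtx{U}_\mu^\top \vct{r}^{(i-1)}\|_2 .
\]
Here $\vct{r}^{(i)}$ denotes the residual at the end of phase $i$. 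The rate factor comes from writing $(\kappa-1)/(\kappa+1)=1-2/(\kappa+1)$, which is increasing in $\kappa$, and substituting the upper bound $\kappa\le(\sqrt{\svhat^2+\mu^2}+\|\mtx{E}\|_2)/(\mu-\|\mtx{E}\|_2)$. This gives $2/(\kappa+1)\ge 2(\mu-\|\mtx{E}\|_2)/(\sqrt{\svhat^2+\mu^2}+\mu)$.

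\textbf{Step 2 (chaining).} Since the projected residual at the end of phase $i-1$ is exactly the starting projected residual of phase $i$, the per-phase bounds multiply. Telescoping over $i=1,\dots,p$ gives the factor $2^p$ together with the product of the rates, which is the claimed bound.

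The main obstacle is Step 1's identification of norms. The Chebyshev bound \eqref{eq:cvgbound_chebyshev} is stated with the left singular vectors of the matrix actually being iterated on, namely $\mtx{A}_\mu\mtx{P}^{-1}_{\l_i,\mu}$, whereas the lemma is stated with $\mtx{U}_\mu$. I must therefore verify carefully that the two projections coincide because the column spaces coincide. I also need to check that the warm-start reformulation leaves the optimal residual unchanged, so that the phase-$i$ starting projected residual really is $\|\mtx{A}_\mu(\vct{x}_{i-1}-\vct{x}_\ast)\|_2$. The remaining steps are bookkeeping.
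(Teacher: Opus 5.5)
Your proposal is correct and is essentially the paper's argument, which combines the Chebyshev/polynomial bound with \cref{thm:conditionnumberbound} in each phase and multiplies the factors. You also usefully make explicit what the paper leaves implicit: $\|\mtx{U}_\mu^\top\vct{r}\|_2=\|\mtx{A}_\mu(\vct{x}-\vct{x}_\ast)\|_2$ does not depend on the (invertible) preconditioner, and the warm-started correction problem has the same optimal residual, which is exactly what lets the per-phase bounds chain.

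One caveat comes from \cref{thm:conditionnumberbound} itself, not from your argument. A direct computation gives $\mtx{P}_{\l,\mu}^{-1}\mtx{\widehat A}_{\l,\mu}^\top\mtx{\widehat A}_{\l,\mu}\mtx{P}_{\l,\mu}^{-1}=\svhat^2\Vhat\Vhat^\top+\mu^2(\mtx{I}-\Vhat\Vhat^\top)$, so the lower bound $\sigma_{\min}\ge\mu-\|\mtx{E}\|_2$ you import also needs $\svhat\ge\mu$ (which holds for the algorithm's choice $\svhat=\sqrt{\svhat_\l^2+\mu^2}$). Without it the stated rate can fail: take $p=k^{(1)}=1$, $\mtx{E}=\mtx{0}$ and $\svhat\ll\mu$, where the bound is $O(\svhat^2/\mu^2)$ but one LSQR step on a matrix with only the singular values $\svhat$ and $\mu$ generically cannot contract that much.
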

\begin{proof}
    It follows directly from \cref{eq:weightedsum,thm:conditionnumberbound}.
\end{proof}
\noindent
The bound shows that each phase contributes a contraction factor determined by the current CUR accuracy and target level. As the rank increases, the CUR accuracy usually improves (often significantly, i.e., \(\|\mtx{E}^{(i+1)}\|_2 \ll \|\mtx{E}^{(i)}\|_2\)), so the contraction improves monotonically.

\begin{remark}
    In practice, convergence within each phase is often rapid in the first few LSQR iterations, even when the CUR approximation error is relatively large. This reflects the convergence behavior of LSQR, where early iterations primarily reduce components associated with the largest singular values, which are already captured by the current low-rank approximation. Consequently, only a small number of additional spectral correction is enough in each phase to maintain fast convergence.
\end{remark}


Using standard CUR approximation guarantees (see \cref{thm:curosinsky}), the rank-\(\ell\) CUR approximation error satisfies
$$\|\mtx{E}\|_2 = \|\mtx{A} - \mtx{\widehat{A}}_\ell\|_2 \le C(\ell)\,\sigma_{\ell+1}(\mtx{A}),$$
where $C(\l)$ depends polynomially on $\l$ and is independent of the sketch size.

Combining this with \cref{thm:cvgbound_aplicur_naive} yields the following convergence bound for \mainalg{}, in terms of the target levels \(\{\widehat{\sigma}^{(i)}\}_{i=1}^p\), target ranks \(\{\l_i\}_{i=1}^p\), and the spectral decay of \(\mtx{A}\), as captured by the largest trailing singular values \(\{\sigma_{\l_i+1}\}_{i=1}^p\).

%

\begin{corollary}[Convergence bound for \cref{alg:aplicur_highlevel}]
Under the assumptions of Lemma~\ref{thm:cvgbound_aplicur_naive}, the residual of \mainalg{} after $k$ total LSQR iterations satisfies
\[
\left\|\mtx{U}_\mu^\top\vct{r}_k\right\|_2
\le
2^p
\prod_{i=1}^p
\left(
1 - \frac{2\bigl(\mu - C(\ell_i)\sigma_{\ell_i+1}(\mtx{A})\bigr)}
{\mu + \sqrt{(\widehat{\sigma}^{(i)})^2 + \mu^2}}
\right)^{k^{(i)}}
\|\mtx{U}_\mu^\top \vct{r}_0\|_2.
\]
\end{corollary}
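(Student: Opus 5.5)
The corollary follows from Lemma~\ref{thm:cvgbound_aplicur_naive} by a monotonicity argument applied phase by phase. We also need to take care that the modified contraction factors stay meaningful.

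\textbf{Step 1: starting point.} Begin from the bound of Lemma~\ref{thm:cvgbound_aplicur_naive}. For each phase $i$ its contraction factor is
\[
\rho_i(e) := 1 - \frac{2(\mu - e)}{\mu + \sqrt{(\widehat\sigma^{(i)})^2+\mu^2}},
\qquad e = \|\mtx{E}^{(i)}\|_2 .
\]

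\textbf{Step 2: monotonicity.} The map $e \mapsto \rho_i(e)$ is affine and increasing in $e$, with slope $2/(\mu+\sqrt{(\widehat\sigma^{(i)})^2+\mu^2})>0$. Invoke the CUR guarantee (\cref{thm:curosinsky}), applied to the rank-$\ell_i$ approximation produced in phase $i$:
\[
\|\mtx{E}^{(i)}\|_2 \le C(\ell_i)\,\sigma_{\ell_i+1}(\mtx{A}).
\]
Together with monotonicity this gives
\[
\rho_i\bigl(\|\mtx{E}^{(i)}\|_2\bigr) \le \rho_i\bigl(C(\ell_i)\sigma_{\ell_i+1}(\mtx{A})\bigr).
\]

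\textbf{Step 3: nonnegativity.} Raising to the power $k^{(i)}$ and taking the product over $i$ preserves inequalities only if the factors are nonnegative. The smaller factor $\rho_i(\|\mtx{E}^{(i)}\|_2)$ is nonnegative because it equals $(\kappa_i-1)/(\kappa_i+1)$ evaluated at an upper bound $\kappa_i \ge 1$. Concretely, write $a = \|\mtx{E}^{(i)}\|_2$ and $s = \sqrt{(\widehat\sigma^{(i)})^2+\mu^2} \ge \mu$. Then
\[
\rho_i(a) = \frac{s - \mu + 2a}{s + \mu} \ge 0 .
\]
The larger factor $\rho_i\bigl(C(\ell_i)\sigma_{\ell_i+1}\bigr)$ is therefore also nonnegative. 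Hence $t \mapsto t^{k^{(i)}}$ is monotone on the relevant range, and a product of nonnegative terms each bounded above gives the stated inequality. The prefactor $2^p$ and $\|\mtx{U}_\mu^\top\vct r_0\|_2$ are carried over unchanged.

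\textbf{Main obstacle: the regime of validity.} If $C(\ell_i)\sigma_{\ell_i+1}(\mtx A)\ge\mu$, the new factor is at least $1$. The bound is then trivially true but vacuous, since the lemma's bound already holds under $\|\mtx{E}^{(i)}\|_2<\mu$ and nothing further is needed. The bound is informative only when $C(\ell_i)\sigma_{\ell_i+1}<\mu$. I would state this explicitly as a remark rather than as an additional hypothesis, because the assumptions of Lemma~\ref{thm:cvgbound_aplicur_naive} alone suffice for validity.

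A second subtlety is that \cref{thm:curosinsky} may be a probabilistic or existence statement for the particular CUR produced by IterativeCUR. I would interpret $C(\ell)$ as whatever constant that theorem provides for the CUR actually used, so that the corollary inherits exactly its guarantees, including any failure probability.
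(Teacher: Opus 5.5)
Your argument is correct and matches the paper's. The paper simply substitutes the CUR guarantee $\|\mtx{E}^{(i)}\|_2 \le C(\ell_i)\sigma_{\ell_i+1}(\mtx{A})$ into the contraction factors of Lemma~\ref{thm:cvgbound_aplicur_naive}, which are increasing in $\|\mtx{E}^{(i)}\|_2$, while your nonnegativity check and your remark that this CUR bound must in effect be assumed for the CUR actually produced make explicit what the paper leaves implicit (\cref{thm:curosinsky} is an existence-type result for a specific selection procedure, not a guarantee for IterativeCUR with LUPP).
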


\section{Adaptively Preconditioned LSQR via iterative CUR (\mainalg{}): implementation}\label{sec:implementation}
We now provide a detailed implementation of \cref{alg:aplicur_highlevel}, including the construction of the CUR approximation, preconditioner updates, and adaptive criteria.

\subsection{Single sketch rank-adaptive CUR approximation}
We briefly describe how to construct a sequence of CUR approximations with increasing rank using IterativeCUR~\cite{pritchard2025fast}, using only a single sketch. While numerous CUR variants exists---differing in sketching methods, index selection strategies, core matrix construction, and rank choice---we focus here on the specific variant used in our implementation. A comprehensive overview of alternative approaches is provided in \cref{app:cur_details}.

Let $\mtx{S}\in \F^{s \times m}$ be a random embedding of sketch dimension $s = 1.1b$ for a prescribed block size $b$. Then, we form the sketch $\mtx{Y} = \mtx{SA}$. In our implementation, we employ a sparse sign embedding (\cref{def:sparsesign}), which allows the sketch to be formed in $\mathcal{O}(\xi \cdot \text{nnz}(\mtx{A}))$ operations, where $\xi$ is the sparsity parameter of the embedding. Crucially, this sketch is computed once and reused throughout the algorithm.

The sparse sign embedding is particularly efficient when $\mtx{A}$ is sparse, as the cost of applying the sketch scales linearly with $\text{nnz}(\mtx{A})$, the number of nonzero entries in $\mtx{A}$~\cite{clarkson2017low}. Moreover, it ensures the sketched sparse matrix to remain sparse. 
\yn{YN: My understanding is that in IterativeCUR, there is almost no point in using a sparse sketch, as the size is $O(1)$. Relatedly, I think this whole subsection should be removed for the paper, being a summary of \cite{pritchard2025fast}.}
\begin{definition}[Sparse sign embedding~\cite{nelson2013osnap, woodruff2014sketching, park2025accuracy}]\label{def:sparsesign}
    A \textit{sparse sign embedding} is a sparse random matrix of the form:
    \begin{align*}
        \mtx{S} = \frac{1}{\sqrt{\xi}} \begin{bmatrix}
            \vct{c}_1 & \cdots & \vct{c}_m
        \end{bmatrix}\in \mathbb{R}^{s \times m},
    \end{align*}
    where each column $\vct{c}_i$ is drawn independently and contains $\xi$ independent Rademacher random variables placed at uniformly random coordinates. The parameter $\xi$ is called the \textit{sparsity parameter}.
\end{definition}

For incremental rank growth, we follow the procedure of IterativeCUR.
For a rank-$\l$ CUR approximation $\mtx{\widehat{A}}_\l$ with chosen row and column index sets $\vct{I}$ and $\vct{J}$,
define the sketched residual $$\mtx{E}^{\mathrm{row}} = \mtx{S}(\mtx{A} - \mtx{\widehat{A}}_\l) = \mtx{Y} - \mtx{Y}(:,\vct{J})\mtx{A}(\vct{I},\vct{J})^\dagger\mtx{A}(\vct{I},:).$$The next $b$ column indices, denoted $\vct{J_+}$, are selected by applying pivoted factorization to ${\mtx{E}^\mathrm{row}}^\top$. We then determine the next $b$ row indices, denoted $\vct{J_+}$, by applying a pivoted factorization to the column residual $\mtx{E}^\mathrm{col}=\mtx{A}(:,\vct{J_+}) - \mtx{\widehat{A}}_\l(:,\vct{J_+})$. The index sets are then augmented as $\vct{J}\gets [\vct{J}, \vct{J_+}]$ and $\vct{I} \gets [\vct{I}, \vct{I_+}]$, yielding a rank-$(\l+b)$ CUR approximation accordingly. In IterativeCUR, the effect of the choice of index selection method on accuracy is negligible. Hence, in our implementation we select column and row indices via LUPP factorization. Detailed implementation is provided in \cref{alg:cur_icur,alg:cur_augment}.

A key advantage of this strategy is that no additional sketches are required during rank augmentation. The sketch dimension $s$ remains fixed and is independent of the matrix size or numerical rank. Consequently, IterativeCUR realizes the rank-adaptivity without incurring additional sketching cost.

\subsection{CUR preconditioner}\label{sec:qrsvd}
Given a rank-$\l$ CUR approximation $\mtx{\widehat{A}}_\l$, we must construct the rank-$\l$ spectral preconditioner $\mtx{P}_{\l,\mu}$ defined in \eqref{eq:preconditioner2}, which requires the truncated SVD $\mtx{\widehat{A}}_\l = \mtx{\widehat{U}\widehat{\Sigma}\widehat{V}}^\top$.

Since directly computing SVD on the CUR approximation would be prohibitive for large-scale problems, we exploit the low-rank structure. Let $\mtx{C}=\mtx{Q_C T_C}$ and $\mtx{R^\top}=\mtx{Q_R T_R}$ be thin QR factorizations (in practice implemented via sketched Cholesky QR for efficiency~\cite{balabanov2022randomized,fukaya2014}). Then, we have 
\begin{align}
    \mtx{CUR}=\mtx{Q_c}\mtx{M}\mtx{Q_R^\top},
\end{align} 
where the middle matrix $\mtx{M}:= \mtx{T_C U T_R^\top}\in \F^{\l \times \l}$ serves as a compact spectral surrogate of the CUR approximation, capturing its dominant spectral properties. We compute the SVD $$\mtx{M}=\mtx{\widehat{U}_M \Lhat_M \widehat{V}_M^\top},$$ and obtain the singular values $\Lhat = \Lhat_M$ and right singular vectors $\Vhat = \mtx{Q_R \widehat{V}_M}$. This requires an SVD only of size $\l \times \l$, whose cost is $\mathcal{O}(\l^3)$.

The preconditioner is applied implicitly via matrix-vector products; no dense $n \times n$ matrix is formed. Details of the implementation is provided in \cref{alg:preconditioner}.

\begin{algorithm}[H]
    \caption{\texttt{CURPreconditioner}}
    \label{alg:preconditioner}
    \begin{algorithmic}[1]
        \Require{Rank-$\l$ CUR approximation $\mtx{C, U, R}$, regularization parameter $\mu$.}
        \Ensure Preconditioner $\mtx{P}_{\l,\mu}$

        \Statex
        \State \textbf{Part 1: Factorize $\mtx{C}$ and $\mtx{R}$}
        \State $\mtx{T_C} \gets \texttt{chol}(\mtx{C}^\top \mtx{C})$\Comment{or sketched Cholesky QR for stability}
        \State $[\mtx{Q_R}, \mtx{T_R}] \gets \texttt{qr}(\mtx{R^\top})$
        \Comment{or sketched Cholesky QR for efficiency}
        
        \Statex
        \State \textbf{Part 2: Build spectral preconditioner}
        \State $[\sim, \Lhat, \Vhat_{M}] \gets \texttt{svd}(\mtx{T_C}\mtx{U}\mtx{T_R^\top})$
        \State $\Vhat \gets \mtx{Q}_{\mtx{R}} \Vhat_{\mtx{M}}$ \Comment{use implicit form $\mtx{R}^\top \mtx{T}_{\mtx{R}}^{-1} \Vhat_{\mtx{M}}$ in sparse case}
        \State $\Lhat \gets \texttt{sqrt}(\texttt{diag}(\svhat_1^2 + \mu^2, \ldots, \svhat_\l^2 + \mu^2))$
        \State $\svhat \gets \texttt{sqrt}(\svhat_{\l}^2 + \mu^2)$ \Comment{target level}
        \State $\mtx{P}_{\l,\mu} \gets \svhat^{-1} \Vhat\Lhat\Vhat^\top + (\mtx{I}_n - \Vhat\Vhat^\top$)
        \Comment{apply via \texttt{matvec}}
    \end{algorithmic}
\end{algorithm}

\subsubsection{SVD-free variant}\label{sec:svdfreepreconditioner}
Although the strategy described above is reducing the SVD cost of an $m \times n$ matrix to that of an $\l \times \l$ matrix, the smaller SVD can still dominate the preconditioner construction cost when $\l$ is large. We therefore introduce an SVD-free variant of the preconditioner in the unregularized case (i.e., $\mu = 0$ in \eqref{eq:regularizedLLS}).

With the QR factorizations $\mtx{C}=\mtx{Q_C}\mtx{T_C}$ and $\mtx{R^\top} = \mtx{Q_R}\mtx{T_R}$, recall that we can form the small core matrix $\mtx{M} = \mtx{T_C} \mtx{A}(\vct{I},\vct{J})^\dagger \mtx{T_R^\top} \in \F^{\l \times \l}$.
Rather than computing an SVD of $\mtx{M}$, we can define the rank-$\l$ SVD-free preconditioner directly in terms of these matrices:
\begin{align}\label{eq:preconditioner_svdfree}
    \mtx{\tilde{P}}_\l &= \svhat^{-1}\mtx{Q_R} \mtx{M}\mtx{Q_R}^\top + (\mtx{I_n}-\mtx{Q_R Q_R^\top}),\notag\\
    \mtx{\tilde{P}}_\l^{-1} &= \svhat\mtx{Q_R} \mtx{M}^{-1}\mtx{Q_R}^\top + (\mtx{I_n}-\mtx{Q_R Q_R^\top}).
\end{align}
\noindent
When $\mtx{M}$ is nonsingular, its inverse can be explicitly written as
\begin{align*}
    \mtx{M}^{-1} = \mtx{T_R}^{-\top}\mtx{A}(\vct{I},\vct{J}) \mtx{T_C}^{-1},
\end{align*}
so applying $\mtx{M}^{-1}$ requires only two $\ell\times \ell$ triangular solves 
and a multiplication by the small intersection matrix $\mtx{A}(\vct{I}, \vct{J})$. The nonsingularity of $\mtx{M}$ is guaranteed provided that the target rank $\l$ does not exceed the numerical rank of $\mtx{A}$, and sensible sets of findices are found for C and R---a condition  naturally enforced by the adaptive rank selection strategy in our implementation. Details of the implementation is provided in \cref{alg:svdfreepreconditioner}.


Although the SVD-free variant differs algebraically from the SVD-based preconditioner in \cref{eq:preconditioner2}, the two are equivalent up to an orthogonal transformation, as established in the next lemma.
\begin{lemma}\label{lemma:svdfreepreconditioner}
    Let $\mtx{P}$ denote the SVD-based preconditioner defined in \cref{eq:preconditioner2}, and let $\mtx{\widetilde{P}}$ denote the SVD-free preconditioner defined in \cref{eq:preconditioner_svdfree}. Then, we have
    \begin{align}
        \mtx{\tilde{P}}_\l^{-1}  = \mtx{P}_\l^{-1} \mtx{W},
    \end{align}
    where $\mtx{W}$ is an orthogonal matrix.

    Consequently, the preconditioned matrices share the same singular values:
    \begin{align*}
        \sigma_i (\mtx{A}\mtx{\tilde{P}}_\l^{-1}) = \sigma_i (\mtx{AP}_\l^{-1})\quad \text{for all $i$}.
    \end{align*}
\end{lemma}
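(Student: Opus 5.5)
The plan is to write both inverses in the common orthonormal basis $\mtx{Q_R}$ and read off $\mtx{W}$ explicitly. We work in the unregularized setting $\mu=0$ of \cref{sec:svdfreepreconditioner}, using the same target level $\svhat$ in both preconditioners. We also assume $\mtx{M}$ is nonsingular, as stipulated there.

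First, recall from \cref{sec:qrsvd} that $\mtx{M}=\mtx{T_C}\mtx{U}\mtx{T_R}^\top=\widehat{\mtx{U}}_M\Lhat\widehat{\mtx{V}}_M^\top$, with $\widehat{\mtx{U}}_M,\widehat{\mtx{V}}_M\in\F^{\l\times\l}$ orthogonal, and that $\Vhat=\mtx{Q_R}\widehat{\mtx{V}}_M$.

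\textbf{Step 1: identify the two projectors.} Since $\widehat{\mtx{V}}_M$ is square orthogonal, $\Vhat$ has orthonormal columns and
\[
\Vhat\Vhat^\top=\mtx{Q_R}\widehat{\mtx{V}}_M\widehat{\mtx{V}}_M^\top\mtx{Q_R}^\top=\mtx{Q_R}\mtx{Q_R}^\top .
\]
So both preconditioners use the same complementary projector $\mtx{\Pi}:=\mtx{I}_n-\mtx{Q_R}\mtx{Q_R}^\top$.

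\textbf{Step 2: write out both inverses.} With $\mu=0$ and $\Lhat$ invertible (because $\mtx{M}$ is nonsingular),
\[
\mtx{P}_\l^{-1}=\svhat\,\mtx{Q_R}\widehat{\mtx{V}}_M\Lhat^{-1}\widehat{\mtx{V}}_M^\top\mtx{Q_R}^\top+\mtx{\Pi}.
\]
Substituting $\mtx{M}^{-1}=\widehat{\mtx{V}}_M\Lhat^{-1}\widehat{\mtx{U}}_M^\top$ into \cref{eq:preconditioner_svdfree} gives
\[
\mtx{\tilde P}_\l^{-1}=\svhat\,\mtx{Q_R}\widehat{\mtx{V}}_M\Lhat^{-1}\widehat{\mtx{U}}_M^\top\mtx{Q_R}^\top+\mtx{\Pi}.
\]
The two expressions differ only in the rightmost $\l\times\l$ orthogonal factor inside the range of $\mtx{Q_R}$.

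\textbf{Step 3: define $\mtx{W}$ and check the identity.} Set
\[
\mtx{W}:=\mtx{Q_R}\widehat{\mtx{V}}_M\widehat{\mtx{U}}_M^\top\mtx{Q_R}^\top+\mtx{\Pi}.
\]
Orthogonality follows because $\widehat{\mtx{V}}_M\widehat{\mtx{U}}_M^\top$ is orthogonal, $\mtx{Q_R}^\top\mtx{Q_R}=\mtx{I}_\l$, and $\mtx{Q_R}^\top\mtx{\Pi}=\mtx{0}$. Expanding $\mtx{W}^\top\mtx{W}$, the cross terms vanish and we obtain $\mtx{Q_R}\mtx{Q_R}^\top+\mtx{\Pi}=\mtx{I}_n$.

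Next, multiply $\mtx{P}_\l^{-1}\mtx{W}$ term by term. The cross terms vanish for the same reason. The leading term becomes
\[
\svhat\,\mtx{Q_R}\widehat{\mtx{V}}_M\Lhat^{-1}\widehat{\mtx{V}}_M^\top\widehat{\mtx{V}}_M\widehat{\mtx{U}}_M^\top\mtx{Q_R}^\top=\svhat\,\mtx{Q_R}\mtx{M}^{-1}\mtx{Q_R}^\top,
\]
and $\mtx{\Pi}^2=\mtx{\Pi}$. This yields $\mtx{\tilde P}_\l^{-1}=\mtx{P}_\l^{-1}\mtx{W}$.

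\textbf{Step 4: conclude for singular values.} We have $\mtx{A}\mtx{\tilde P}_\l^{-1}=(\mtx{A}\mtx{P}_\l^{-1})\mtx{W}$. Right-multiplication by an orthogonal matrix leaves $\mtx{B}^\top\mtx{B}$ unchanged up to orthogonal similarity, so all singular values coincide.

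The main point needing care is the bookkeeping in Steps 1–2. We must confirm that $\Vhat$ spans exactly $\operatorname{range}(\mtx{Q_R})$, which holds because $\mtx{R}$ has full row rank $\l$ and $\widehat{\mtx{V}}_M$ is square. We must also confirm that $\Lhat$ is invertible. Both follow from the nonsingularity of $\mtx{M}$ assumed in \cref{sec:svdfreepreconditioner}. If that assumption fails, I would instead interpret $\mtx{M}^{-1}$ as $\mtx{M}^\dagger$ and restrict the claim accordingly.
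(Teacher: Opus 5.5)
Your proof is correct and is essentially the paper's argument. Your $\mtx{W}=\mtx{Q_R}\widehat{\mtx{V}}_M\widehat{\mtx{U}}_M^\top\mtx{Q_R}^\top+(\mtx{I}_n-\mtx{Q_R}\mtx{Q_R}^\top)$ is the same matrix as the paper's $\Vhat(\widehat{\mtx{U}}_M^\top\widehat{\mtx{V}}_M)\Vhat^\top+(\mtx{I}_n-\Vhat\Vhat^\top)$, written in the $\mtx{Q_R}$ basis instead of the $\Vhat$ basis (substitute $\mtx{Q_R}=\Vhat\widehat{\mtx{V}}_M^\top$), and you additionally make explicit what the paper leaves implicit: $\Vhat\Vhat^\top=\mtx{Q_R}\mtx{Q_R}^\top$, orthogonality of $\mtx{W}$, and the standing assumptions $\mu=0$, a common target level $\svhat$, and nonsingular $\mtx{M}$.
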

\begin{proof}
    Using the decompositions from \cref{sec:qrsvd}, write
    \begin{align*}
        \mtx{M}=\mtx{\widehat{U}_M \Lhat_M \widehat{V}_M^\top}, \quad \Vhat = \mtx{Q_R \widehat{V}_M}.
    \end{align*}
    Then the inverse of the SVD-free preconditioner \cref{eq:preconditioner_svdfree} becomes
    \begin{align*}
        \mtx{\tilde{P}}_\l^{-1} 
        &= \svhat \Vhat \mtx{\Lhat}^{-1} (\mtx{\widehat{U}_M^\top \Vhat_M}) \Vhat^\top + (\mtx{I_n}-\mtx{\Vhat \Vhat^\top}).
    \end{align*}
    Compared to
    \begin{align*}
        \mtx{P}_\l^{-1} = \svhat \Vhat \mtx{\Lhat}^{-1} \Vhat^\top + (\mtx{I_n}-\mtx{\Vhat \Vhat^\top}),
    \end{align*}
    the only difference is the orthogonal factor $\mtx{\widehat{U}_M^\top \Vhat_M}$.

    Defining
    \begin{align*}
        \mtx{W}=  \Vhat (\mtx{\widehat{U}_M^\top \Vhat_M}) \Vhat^\top + (\mtx{I_n}-\mtx{\Vhat \Vhat^\top}),
    \end{align*}
    we obtain $\mtx{\widetilde{P}}^{-1} = \mtx{P}_\l^{-1}\mtx{W}$, and $\mtx{W}$ is orthogonal.
    
\end{proof}
The lemma shows that the two preconditioners differ only by a right orthogonal transformation. The following corollary formalizes the consequences.
\begin{corollary}[Convergence Equivalence]
    Let LSQR be applied to the systems preconditioned by $\mtx{P}_\l$ and $\mtx{\widetilde{P}}_\l$, producing iterates $\vct{y}_k = \vct{P}_\l \vct{x}_k$ and $\vct{\widetilde{y}}_k = \vct{\widetilde{P}}\vct{x}_k$. Then, at the $k$-th iterate,
    \begin{align*}
        \|\mtx{\widetilde{U}^{\top}}(\mtx{A}\mtx{\tilde{P}}_\l^{-1}\vct{\tilde{y}}_k-\vct{b})\|_2  = \|\mtx{\widetilde{U}^{\top}}(\mtx{A}\mtx{P}_\l^{-1} \vct{y}_k - \vct{b})\|_2,
    \end{align*}
    where $\mtx{\widetilde{U}}$ denotes the left singular vector matrix of $\mtx{AP}_\l^{-1}$.
\end{corollary}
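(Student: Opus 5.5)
The plan is to reduce the statement to the right-orthogonal relation of \cref{lemma:svdfreepreconditioner}. Put $\mtx{B}:=\mtx{A}\mtx{P}_\l^{-1}$. The lemma gives $\mtx{A}\mtx{\tilde{P}}_\l^{-1}=\mtx{B}\mtx{W}$ with $\mtx{W}$ orthogonal. So the SVD-free run is ordinary (unpreconditioned) LSQR on $\min_{\vct{y}}\|\mtx{B}\mtx{W}\vct{y}-\vct{b}\|_2$, and the SVD-based run is LSQR on $\min_{\vct{y}}\|\mtx{B}\vct{y}-\vct{b}\|_2$. The goal is to show that the two runs produce the same residual vectors $\vct{b}-\mtx{B}\vct{y}_k=\vct{b}-\mtx{B}\mtx{W}\vct{\tilde{y}}_k$ at every $k$. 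The equality of projected residual norms then follows by applying the same $\mtx{\widetilde{U}}^\top$ to both sides. Note that $\mtx{\widetilde{U}}$, the left singular vector matrix of $\mtx{B}$, is also a valid left singular vector matrix of $\mtx{B}\mtx{W}$, because $\mtx{B}\mtx{W}=\mtx{\widetilde{U}}\mtx{\widetilde{\Sigma}}(\mtx{W}^\top\mtx{\widetilde{V}})^\top$.

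\textbf{Step 1 (Krylov subspaces correspond).} I will use the characterization from \cref{sec:section2}: LSQR's $k$-th iterate minimizes the residual over the affine Krylov space. Take the initial guesses to be related by $\vct{y}_0=\mtx{W}\vct{\tilde{y}}_0$, for instance both zero. Then the initial residuals coincide, $\vct{r}_0=\vct{b}-\mtx{B}\vct{y}_0$. For the SVD-free system,
\[
(\mtx{B}\mtx{W})^\top(\mtx{B}\mtx{W})=\mtx{W}^\top\mtx{B}^\top\mtx{B}\mtx{W},\qquad (\mtx{B}\mtx{W})^\top\vct{r}_0=\mtx{W}^\top\mtx{B}^\top\vct{r}_0 .
\]
By induction on powers,
\[
\mathcal{K}_k\bigl(\mtx{W}^\top\mtx{B}^\top\mtx{B}\mtx{W},\,\mtx{W}^\top\mtx{B}^\top\vct{r}_0\bigr)=\mtx{W}^\top\mathcal{K}_k\bigl(\mtx{B}^\top\mtx{B},\,\mtx{B}^\top\vct{r}_0\bigr).
\]

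\textbf{Step 2 (iterates correspond).} Use the change of variables $\vct{y}=\mtx{W}\vct{\tilde{y}}$. It maps the affine search space of the SVD-free problem bijectively onto that of the SVD-based problem, and it preserves the objective: $\|\mtx{B}\mtx{W}\vct{\tilde{y}}-\vct{b}\|_2=\|\mtx{B}\vct{y}-\vct{b}\|_2$. Hence the minimizers correspond, giving $\vct{y}_k=\mtx{W}\vct{\tilde{y}}_k$, and in particular the residuals are equal. If the minimizer is not unique, which happens only when $\mtx{B}$ is rank deficient, the residual $\mtx{B}\vct{y}-\vct{b}$ is still unique because it is an orthogonal projection. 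So the residual equality holds regardless.

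\textbf{Step 3 (conclude).} Apply $\mtx{\widetilde{U}}^\top$ to the equal residual vectors and take norms to get the claimed identity. This also shows the convergence bounds of \cref{thm:cvgbound_aplicur_naive} transfer verbatim. They could alternatively be obtained directly from the equal singular values in \cref{lemma:svdfreepreconditioner} via \cref{eq:weightedsum}, but that route gives only an equal bound, not equal residual norms.

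The main obstacle I expect is notational bookkeeping rather than mathematics. The statement writes $\vct{y}_k=\mtx{P}_\l\vct{x}_k$ and $\vct{\tilde{y}}_k=\mtx{\tilde{P}}_\l\vct{x}_k$ as if both runs used the same $\vct{x}_k$. The consistent reading is that each $\vct{x}_k$ is the corresponding recovered solution, and I will prove that these recovered solutions in fact coincide: $\mtx{\tilde{P}}_\l^{-1}\vct{\tilde{y}}_k=\mtx{P}_\l^{-1}\mtx{W}\vct{\tilde{y}}_k=\mtx{P}_\l^{-1}\vct{y}_k$. I also need to state the exact-arithmetic assumption and the matching initial guesses explicitly, since the correspondence holds only with them.
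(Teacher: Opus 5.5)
Your argument is correct. It rests on the same fact as the paper's proof, namely the relation $\mtx{A}\mtx{\tilde{P}}_\l^{-1}=\mtx{A}\mtx{P}_\l^{-1}\mtx{W}$ from \cref{lemma:svdfreepreconditioner}, but the mechanics differ.

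The paper writes the SVD $\mtx{A}\mtx{\tilde{P}}_\l^{-1}=\mtx{\widetilde{U}}\mtx{\widetilde{\Sigma}}(\mtx{W}^\top\mtx{\widetilde{V}})^\top$ and takes $\vct{\tilde{y}}_k=\mtx{W}^\top\vct{y}_k$ as given from the common $\vct{x}_k$ in the statement. It then shows that the two polynomial error expressions $\min_{p_k}\|\cdots p_k(\mtx{\widetilde{\Sigma}}^2)\cdots\|_2$ coincide because $\mtx{W}\mtx{W}^\top=\mtx{I}$ cancels. You instead use the variational characterization of LSQR: $\mtx{W}^\top$ maps the affine Krylov spaces onto each other, and the objective is invariant under $\vct{y}=\mtx{W}\vct{\tilde{y}}$. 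You therefore \emph{derive} $\vct{y}_k=\mtx{W}\vct{\tilde{y}}_k$ rather than assume it.

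What each route buys:
\begin{itemize}
\item The paper's version stays inside the spectral/polynomial framework of \cref{eq:weightedsum} that drives the rest of the convergence analysis.
\item Yours needs no SVD or polynomial representation, and it makes the hypotheses explicit. These are exact arithmetic and initial guesses related by $\mtx{W}$; the latter holds automatically when both runs start from the same $\vct{x}_0$, since $\mtx{\tilde{P}}_\l=\mtx{W}^\top\mtx{P}_\l$.
\item Yours also gives a stronger conclusion: the full residual vectors and the recovered solutions $\mtx{P}_\l^{-1}\vct{y}_k=\mtx{\tilde{P}}_\l^{-1}\vct{\tilde{y}}_k$ coincide, not just the projected norms.
\end{itemize}

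Your caveat about non-unique minimizers is unnecessary. Since $\mathcal{K}_k(\mtx{B}^\top\mtx{B},\mtx{B}^\top\vct{r}_0)\subseteq\mathrm{range}(\mtx{B}^\top)$ and $\mtx{B}$ is injective there, the Krylov minimizer is always unique.
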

\begin{proof}
    Let $\mtx{AP}_\l^{-1} = \mtx{\widetilde{U}\widetilde{\Sigma}\widetilde{V}^\top}$ be an SVD. Then, $\mtx{A\widetilde{P}}^{-1} = \mtx{\widetilde{U}\widetilde{\Sigma}(\mtx{W}^\top\widetilde{V})^\top}$ is also an SVD. Since $\vct{\widetilde{y}}_k = \mtx{W^\top}\vct{y}_k$, the coordinates in the right singular basis are unchanged, and the polynomial residual expression is identical:
    \begin{align*}
        \|\mtx{\widetilde{U}^{\top}}(\mtx{A}\mtx{\tilde{P}}_\l^{-1}\vct{\tilde{y}}_k-\vct{b})\|_2 &= \min_{p_k} \|\mtx{A}\mtx{\tilde{P}}_\l^{-1}(\mtx{W^\top} \mtx{\widetilde{V}})p_k(\mtx{\widetilde{\Sigma}}^2)(\mtx{W^\top} \mtx{\widetilde{V}})^\top\vct{\widetilde{y}_\ast}
        \|_2 \\
        &= \min_{p_k} \|\mtx{A}\mtx{P}_\l^{-1}\mtx{W}\mtx{W^\top} \mtx{\widetilde{V}}p_k(\mtx{\widetilde{\Sigma}}^2) \mtx{\widetilde{V}}^\top\mtx{W}\mtx{W^\top}\vct{y_\ast}\|_2 \\
        &= \min_{p_k} \|\mtx{A}\mtx{P}_\l^{-1} \mtx{\widetilde{V}}p_k(\mtx{\widetilde{\Sigma}}^2)\mtx{\widetilde{V}}^\top\vct{y_\ast}\|_2 = \|\mtx{\widetilde{U}^{\top}}(\mtx{A}\mtx{P}_\l^{-1} \vct{y}_k - \vct{b})\|_2.
    \end{align*}
\end{proof}
Hence, the SVD-free variant solves the same spectrally flattened problem, differing only by a rotation within the preconditioned subspace.

However, the equivalence between the SVD-free and SVD-based preconditioned problem is restricted to the unregularized case $\mu=0$. In the regularized setting, efficiently evaluating the inverse of regularized spectrum without the SVD is nontrivial. To work around this, we instead form a CUR approximation of the regularized matrix $\mtx{A}_\mu$ and construct the SVD-free preconditioner based on this approximation. This modification breaks the exact equivalence between the SVD-based and SVD-free preconditioned problems. Nevertheless, empirical results indicate that this difference has no noticeable impact in practice (see \cref{sec:numstud_svdfree}).

\subsection{Adaptivity criteria}
To realize the rank and schedule adaptivity described in \cref{sec:adaptivity}, we introduce three criteria: (i) CUR approximation tolerance controlling rank growth, (ii) a re-preconditioning condition controlling when the preconditioner is updated, and (iii) a dynamic stopping rule determining the duration of each LSQR phase. These criteria ensure that the computational effort is aligned with the spectral structure and the requested accuracy of the problem.

\subsubsection{Rank adaptivity: CUR tolerance}

Rank growth is governed by monitoring the spectral error of the CUR approximation. Since LSQR convergence is controlled by the largest unresolved singular component (\cref{sec:section2}), the spectral norm is the appropriate metric in the preconditioning context.

For computational efficiency, we estimate the spectral norm of the sketched rank-$\l$ residual $\mtx{E}^{\mathrm{row}}$ using the randomized bound in~\cite[Lemma 4.1]{halko2011finding}:
\begin{align}\label{eq:2normestimate}
    \|\mtx{E}^{\mathrm{row}}\|_2 \leq 10 \sqrt{2/\pi} \max_{i = 1, \ldots, r} \|\mtx{E}^{\mathrm{row}}\vct{w}_i\|_2 =: \rho,
\end{align}
where $\vct{w}_i$ are i.i.d. standard Gaussian vectors. This bound holds with probability at least $1-10^{-r}$; in our implementation we take $r=10$. We avoid power-iteration-based estimators~\cite{liberty2007randomized}, as they tend to underestimate the spectral norm and may therefore lead to rank underestimation. At each iteration, we monitor the error estimate $\rho$ and increase the rank until the \emph{termination condition} $$\rho \leq \varepsilon_{\mathrm{cur}}$$ is satisfied for the prescribed error tolerance $\varepsilon_{\mathrm{cur}}$. In practice, $\varepsilon_\mathrm{cur}$ should be chosen relative to the regularization level $\mu$; a robust choice is $\varepsilon_{\mathrm{cur}}\in[30\mu,100\mu]$, which avoids truncating significant spectral components while preventing unnecessary rank growth.


\subsubsection{Schedule adaptivity: Re-preconditioning and stopping criteria}\label{sec:adaptivitycriteria}
While the spectral tolerance determines \emph{what} to precondition (rank adaptivity), schedule adaptivity determines \emph{when} to update the preconditioner and how long each LSQR phase should continue. Two criteria govern this process.

\paragraph{Re-preconditioning criterion}
Let $d_{\mathrm{cur}}:= \rho - \varepsilon_{\mathrm{cur}}$ denote the slackness of the CUR approximation relative to desired accuracy at the previous preconditioning step. We trigger a new preconditioning phase only when the improvement in the CUR approximation is sufficiently significant, according to $$d_{\mathrm{cur}} / (\rho - \varepsilon_{\mathrm{cur}}) \geq \nu_{\mathrm{\mathrm{prec}}},$$where $\nu_{\mathrm{\mathrm{prec}}}$ is the re-preconditioning tolerance.
This condition prevents unnecessary reconstruction of the preconditioner when additional rank yields only marginal benefit. 
In practice, the method is relatively insensitive to this parameter, and a moderate value such as $\nu_{\mathrm{prec}} \approx 10$ provides a good balance between the cost of updating the preconditioner and the expected gain in spectral flattening.

\paragraph{Dynamic stopping criteria}
Within each preconditioned LSQR phase, it is important to fully exploit the current preconditioner before increasing the target rank. Increasing the rank generally improves conditioning by flattening additional singular values, but re-preconditioning too early wastes computational effort. To balance this trade-off, we monitor the convergence behavior of LSQR and adaptively decide when to terminate the current phase.

Let $\bar{\phi}_j$ denote the estimated residual at the $j$th LSQR iteration, computed as an intermediate scalar arising from the Golub–Kahan bidiagonalization process~\cite[Section~5]{paige1982lsqr}, with $\bar{\phi}_{0}$ denoting the initial residual. Define\begin{align*}
    \mathrm{cvgrate}_j = \log(\bar{\phi}_{j-1}/\bar{\phi}_{j}) \qquad \text{and} \qquad
    \mathrm{cvgdiff}_j = \bar{\phi}_{j-1} - \bar{\phi}_{j}.
\end{align*} We terminate the current LSQR phase when 
\begin{align*}
    \frac{\mathrm{cvgrate}_1}{\mathrm{cvgrate}_j} > \nu_{\mathrm{\mathrm{lsqr}}}\qquad \text{or} \qquad
    \mathrm{cvgdiff}_j < \svhat_\l,
\end{align*}where $\nu_{\mathrm{\mathrm{lsqr}}}$ is a prescribed tolerance and $\svhat_\l$ is the smallest singular value of the current CUR approximation. Empirically, values in the range $\nu_{\mathrm{lsqr}}\in[100,200]$ provide a robust choice: smaller values may trigger premature termination, while larger values can lead to inefficient LSQR iterations with slow progress.

These criteria are designed to detect a slowdown in the LSQR convergence. When the rate deteriorates sufficiently relative to the initial rate, or when the residual decrease falls below the smallest resolved singular value scale $\svhat_\l$, this indicates that the benefit of the current preconditioner has likely been exhausted and that further rank expansion is needed.

The dynamic LSQR termination rule is applied alongside the standard LSQR stopping criteria, such as monitoring the gradient of residual and the projected residual~\cite{epperly2024fastforwardstablerandomized,chang2009stopping}, and is deactivated in the final LSQR phase once the CUR approximation has reached its prescribed tolerance.

Together, the approximation error tolerance, re-preconditioning condition, and dynamic LSQR stopping rules provide adaptive control of our algorithm. The rank of the preconditioner grows only when the CUR approximation improves meaningfully, and LSQR phases are neither prematurely terminated  nor unnecessarily prolonged. This ensures that computational effort scales naturally with the spectral properties of the problem and the requested accuracy.

\subsection{Full \mainalg{} procedure}
We now summarize the complete adaptive solver in \cref{alg:aplls}. The method integrates rank-adaptive CUR construction with adaptively scheduled preconditioned-LSQR phases for solving the regularized least-squares problem.

The algorithm consists of three main components executed within an outer rank-expansion loop. Firstly, the CUR factors are incrementally augmented 
and the spectral residual estimate $\rho$ is updated (Part 1). Secondly, the necessary factorizations of $\mtx{R}$ are maintained to update the preconditioner efficiently (Part 2). Thirdly, whenever the re-preconditioning criterion is satisfied, a new preconditioner is formed and LSQR is invoked with a warm 
start and dynamic stopping control (Part 3).

The process terminates once the CUR approximation meets the prescribed spectral tolerance $\varepsilon_{\mathrm{cur}}$. In this way, rank growth, preconditioner updates, and Krylov iterations are coordinated adaptively.

\begin{algorithm}
    \caption{\textsl{\mainalg{}: detailed implementation}}
    \label{alg:aplls}
    \begin{algorithmic}[1]
        \Require $\mtx{A}\in\F^{m\times n}$; $\vct{b}\in\F^{m}$; regularization $\mu>0$; block size $b$; CUR tolerance $\varepsilon_{\mathrm{cur}}$; LSQR tolerance $\varepsilon_{\mathrm{lsqr}}$; re-preconditioning tolerance $\nu_{\mathrm{prec}}>1$; LSQR dynamic stopping tolerance $\nu_{\mathrm{lsqr}}>1$.
        \Statex
        \State \textbf{Initialise:}
        \State $\mtx{A}_\mu = [\mtx{A}; \mu \mtx{I}_n]$, $\vct{b}_{\mathrm{aug}} = [\vct{b};\vct{0}_{n \times 1}]$
        \State $\mtx{S}\gets\texttt{sparsesign}(\lceil1.1 b\rceil, m, 8)$ \Comment{$b\times m$ random embedding}
        \State $\mtx{E}^{\mathrm{row}}\gets\mtx{SA}$ \Comment{compute sketch once and reuse}
        \State $\vct{I}=\vct{J}=\emptyset,\quad \mtx{C}=\mtx{U}=\mtx{R}=\emptyset,\quad$ $\mtx{Q_R}=\mtx{T_R}=\emptyset$
        \State $d_{\mathrm{cur}}=\texttt{Inf}$,\quad $\vct{x}_0=\vct{0}$
        \Statex
        \Repeat \Comment{rank increases by $b$ each iteration}
            \State \textbf{Part 1: Update CUR}
            \State $[\vct{I}, \vct{J}] \gets \texttt{augmentCUR}(\mtx{A}, \mtx{C}, \mtx{U}, \mtx{R}, \mtx{E}^{\mathrm{row}}, \vct{I}, \vct{J})$ \Comment{use \cref{alg:cur_augment}}
            \State $\mtx{C}\gets\mtx{A}(:,\vct{J}),\quad$$\mtx{U}\gets\mtx{A}(\vct{I},\vct{J})^\dagger,\quad$$\mtx{R}\gets\mtx{A}(\vct{I},:)$
            \Comment{use \texttt{qr} or \texttt{lu} for $\mtx{U}$}
            \State $\mtx{E}^{\mathrm{row}}\gets\mtx{SA}-\mtx{S}\,\mtx{C}\,\mtx{U}\,\mtx{R}$ \Comment{use $\mtx{SC}$ from $\mtx{SA}$}
            \State $\rho\gets \text{estimate of }\|\mtx{E}^{\mathrm{row}}\|_2$ \Comment{estimated by \cref{eq:2normestimate}}
            \Statex
            \State \textbf{Part 2: Update Factorizations}
            \State $\vct{I_+} \gets \vct{I}(\texttt{end}-b +1, \texttt{end})$ \Comment{updated rows}
        \State $[\mtx{Q_R},\mtx{T_R}]\gets\texttt{augmentQR}(\mtx{Q_R},\mtx{T_R},\mtx{A}(\vct{I_+},:)^\top)$ \Comment{use \cref{alg:augmentqr}}
            \Statex
            \State \textbf{Part 3: Update Preconditioner and run LSQR}
            \If{$d_{\mathrm{cur}}/(\rho-\varepsilon_{\mathrm{cur}}) \ge \nu_{\mathrm{prec}}$ \textbf{or} $\rho \le \varepsilon_{\mathrm{cur}}$} \Comment{re-preconditioning}
                \State $\l \gets \texttt{length}(\mtx{I})$
                \State $\mtx{T_C}\gets\texttt{chol}(\mtx{C}^\top\mtx{C})$ \Comment{use sketched Cholesky QR}
                \State $\mtx{P}_{\l,\mu}\gets$ \textbf{Part 2} of \texttt{CURPreconditioner} in \cref{alg:preconditioner}
                \State $\nu_{\mathrm{lsqr}}\gets \nu_{\mathrm{lsqr}} + \texttt{Inf}\cdot\mtx{1}_{\rho \le \varepsilon_{\mathrm{cur}}}$ \Comment{dynamic stopping option}
                \State $\vct{x}_0\gets\texttt{\texttt{lsqr}}(\mtx{A}_\mu,\vct{b}_{\mathrm{aug}},\mtx{P}_{\l,\mu},\vct{x}_0, \varepsilon_{\mathrm{lsqr}},\nu_{\mathrm{lsqr}})$ \Comment{LSQR with warm start $\vct{x}_0$}
                \State $d_{\mathrm{cur}}\gets \rho-\varepsilon_{\mathrm{cur}}$ \Comment{error–tolerance gap}
            \EndIf
        \Until{$\rho \le \varepsilon_{\mathrm{cur}}$}
        
        \State \Return $\vct{x}_0$
    \end{algorithmic}
\end{algorithm}

\subsection{Computational complexity}

The computational cost of the proposed algorithm is dominated by two components: (i) factorizations required to construct and update the low-rank preconditioner, and (ii) matrix–matrix multiplications involving the full dimensions $m$ and $n$.

Let $\l_{\textrm{final}}$ be the final target rank of our algorithm. Then, the construction of a rank-$\l_{\textrm{final}}$ CUR approximation costs $\mathcal{O}(mn+(m+n)\l_{\textrm{final}}^2)$, while requiring only $\mathcal{O}(\l_{\textrm{final}}(m+n))$ storage. Also, at each precondition-solve phase, given CUR factors with target rank $\l$, constructing a rank-$\l$ preconditioner costs $\mathcal{O}((m+n+\l)\l^2)$, while storage and application require only $\mathcal{O}(n\l)$. The main bottleneck is the $\mathcal{O}(\l^3)$ SVD required in each preconditioner update, which we eliminate using the SVD-free variant (\cref{sec:svdfreepreconditioner}), reducing the construction cost at the expense of a modest $\mathcal{O}(\l^2)$ increase in application cost.

Additional factorization costs (QR updates, CUR core matrix inversion, and LUPP) are mitigated through incremental updates and the use of efficient dense factorizations~\cite{balabanov2022randomized,fukaya2014,stewart1998}. Matrix–matrix multiplications, while nominally costly, are implemented using BLAS-3 kernels and further reduced in the presence of sparsity. The sketching cost is controlled via sparse embeddings.

A detailed breakdown of all computational components and their associated costs is provided in \cref{app:complexity}.

\section{Experimental Framework}\label{sec:expframework}

\subsection{Roadmap of the numerical experiments}
The remainder of the paper is organized as follows. \cref{sec:expframework} describes the experimental framework, including the test problems, implementation details, and evaluation metrics used throughout the numerical studies. \cref{sec:numericalstudies} investigates the proposed CUR-based preconditioning framework in detail, focusing on the roles of rank adaptivity, schedule adaptivity, and the SVD-free variant, and on how these design choices affect convergence and setup cost. Finally, \cref{sec:benchmark} benchmarks \mainalg{} against representative randomized preconditioned solvers on a range of large-scale regularized least-squares problems, highlighting its robustness, scalability, and performance in challenging sparse and ill-conditioned regimes.

\subsection{Implementation details and environment}
The numerical experiments were conducted on regularized linear least-squares problems of the form \eqref{eq:regularizedLLS} using synthetic test sets generated to address different spectral and structural properties.

All algorithms are implemented in \texttt{MATLAB R2025b} on Mac Mini (Apple M4, 10-core CPU, 16GB RAM) and used MATLAB's built-in multithreaded BLAS/LAPACK. Wall-clock time is measured using \texttt{tic/toc}. Each experiment was repeated over $5$ independent trials with different random seeds; variations across trials are negligible and omitted for clarity.  All code and scripts to reproduce figures will be made available at \jh{github link}.

\subsection{Test sets}

\subsubsection{Label vector}
We consider three problem settings that differ in their consistency properties, determined by the construction of the right-hand side $\vct{b}$. In the \textit{consistent-x} setting, we first draw a random Gaussian vector $\vct{x}_\ast$ and define the right-hand side as $\vct{b} = \mtx{A}\vct{x}_\ast+\vct{e}$, where $\vct{e}$ is a noise vector chosen to be orthogonal to the column space of $\mtx{A}$ (enforced by twice orthogonalization). This construction ensures that $\vct{x}_\ast$ is the exact minimizer of $\min_{\vct{x}}\|\mtx{A}\vct{x}-\vct{b}\|_2$ with a modest-norm solution, $\|\vct{x}_\ast\|_2\approx \sqrt{n}$. Consequently, the computed residual is bounded below by $\|\vct{e}\|_2 = \|\mtx{A}\vct{x}_\ast-\vct{b}\|_2$. In the \textit{consistent-b} setting, the right-hand side is sampled directly from the column space of $\mtx{A}$ by generating $\vct{b} = \texttt{orth(A) * randn(n,1)} $. Thus, $\vct{b}$ lies exactly in $\mathrm{range}(\mtx{A})$. In this case the least-squares problem is consistent, although the corresponding solution will have large norm when $\mtx{A}$ is highly ill-conditioned and the regularization parameter $\mu$ is small.

\subsubsection{Test matrix}

For the test matrix $\mtx{A}\in \F^{m\times n}$, we consider a range of structural and spectral properties known to influence the performance of sketching-based preconditioners and iterative solvers. In particular, we vary: (i) the sparsity, (ii) the spectral decay of $\mtx{A}$ (sharp versus smooth), (iii) the dimension ratio $m/n$, and (iv) the coherence of the matrix.

The coherence of a 
matrix $\mtx{A}$ quantifies how uniformly its information is distributed across its rows, and is known to affect the effectiveness of sampling-based algorithms. In particular, coherence has influence on the performance of CUR approximations, which explicitly sample rows and columns of the matrix, and is therefore especially relevant when comparing against methods such as Blendenpik. 
Following standard definitions \cite{mahoney2011randomized}, the coherence of a matrix $\mtx{A}$ is defined in terms of the row norms of its left singular vectors. Let $\mtx{A}=\mtx{U}\mtx{\Sigma}\mtx{V}^\top$ be the singular value decomposition of $\mtx{A}$. The coherence $\nu(\mtx{A})$ is given by 
\begin{align*}
    \nu(\mtx{A}) = \max_{i \in [m]} \|\mtx{U}_i\|_2,
\end{align*}
where $\mtx{U}_i$ denotes the $i$-th row of $\mtx{U}$. High coherence is well known to degrade the performance of uniform sampling. However, the data-aware sampling strategies, such as the LUPP-based selection employed in our CUR approximation, can instead exploit coherence to identify informative rows and columns more effectively, resulting in a higher accuracy CUR approximation close to truncated SVD.

The synthetic matrices are constructed to provide precise control over spectral decay and coherences enabling systematic tests of the algorithm.

\begin{enumerate}
    \item \textbf{Dense synthetic dataset.} The matrix $\mtx{A} = \mtx{U\Sigma V}^\top$ is explicitly constructed via singular value decomposition, where the singular values are prescribed by $\text{diag}(\mtx{\Sigma})$. The left and right singular vectors, $\mtx{U}$ and $\mtx{V}$, are generated to reflect different levels of coherences:
    \begin{enumerate}
        \item \textit{Incoherent}: $\mtx{U}\in\F^{m \times n}$ and $\mtx{V}\in\F^{m \times n}$ are random orthogonal matrices from the Haar distribution.
        \item \textit{Coherent}: With a matrix of all ones, $\mtx{J}_{m,n} \in \F^{m \times n}$, define
        $$
            \mtx{U} = \mathrm{orth}\left(\begin{bmatrix} \mtx{I}_n \\ \mtx{0} \end{bmatrix} + 10^{-8} \cdot \mtx{J}_{m,n}\right), \quad
            \mtx{V} = \mathrm{orth}\left(\mtx{I}_n + 10^{-8} \cdot \mtx{J}_{m,n}\right).
        $$
    \end{enumerate}

    \item \textbf{Sparse synthetic dataset.} Let $\mtx{B} = \texttt{sprandn}(m,n,0.01)$, where \texttt{sprandn()} is a {MATLAB} function that generates an $m \times n$ sparse matrix with approximately $0.01mn$ non-zero entries drawn independently from the standard normal distribution. Let $\mtx{D}$ be a diagonal matrix whose diagonal entries are drawn independently from $\mathcal{N}(0,1)$. We then construct sparse synthetic datasets with varying levels of coherence using the coherency factor $f$ as follows:
    \begin{align*}
        \mtx{\widetilde C} &= \mtx{D}^f \mtx{B},\\
        \mtx{C_{:,j}} &= \frac{\mtx{\widetilde C_{:,j}}}{\|\mtx{\widetilde C_{:,j}}\|_2}, \quad j = 1, \ldots,m, \\
        \mtx{A} &= \mtx{C}\mtx{\Sigma}
    \end{align*}
    where $f=0$ for incoherent matrices and $f=20$ for coherent matrices, respectively. 
    
\end{enumerate}

For spectral decays, we deal with two different types of decay: sharp decay and smooth decay. In the sharp decay case, the singular values of the matrices with condition number $10^7$ and $10^{15}$ are prescribed as
\begin{align*}
    \text{diag}(\mtx{\Sigma}) = \texttt{[logspace(2,-2,0.2*n),logspace(-4.8,-5,0.8*n)]},\\
    \text{diag}(\mtx{\Sigma}) = \texttt{[logspace(2,-2,0.2*n),logspace(-12,-13,0.8*n)]},
\end{align*}
respectively. Here, $\texttt{logspace}(a, b, m)$ denotes $m$ numbers spaces logarithmically between $10^a$ and $10^b$.
In the smooth decay case, the matrix is highly ill-conditioned, with condition number $10^{15}$, and exhibits a root-exponential spectral decay.

\subsection{Evaluation metrics}
In the following sections (\cref{sec:numericalstudies} and \cref{sec:benchmark}), we assess the performance of the algorithm using various evaluation metrics. Specifically, we examine the quality of the preconditioned singular values (leading and trailing singular values) to evaluate spectral improvement and the relative residual norm versus wall-clock time to assess overall computational efficiency.

For regularized problems, we compare the relative residual $\|\mathbf{A}\mathbf{x}_k - \mathbf{b}\|_2 / \|\mathbf{b}\|_2$ against the optimal relative residual $\|\mathbf{A}\mathbf{x}_\mu - \mathbf{b}\|_2 / \|\mathbf{b}\|_2$. For unregularized problems, we similarly compare $\|\mathbf{A}\mathbf{x}_k - \mathbf{b}\|_2 / \|\mathbf{b}\|_2$ with the optimal relative residual $\|\mathbf{A}\mathbf{x}_* - \mathbf{b}\|_2 / \|\mathbf{b}\|_2$. Here, $\mathbf{x}_\mu$ and $\mathbf{x}_*$ denote the exact minimizers of$$\min_{\mathbf{x}} \|\mathbf{A}_\mu \mathbf{x} - \mathbf{b}_{\text{aug}}\|_2 \quad \text{and} \quad \min_{\mathbf{x}} \|\mathbf{A}\mathbf{x} - \mathbf{b}\|_2,$$respectively, with solution norms satisfying $\|\mathbf{x}_\mu\|_2 \approx \sqrt{n}$ and $\|\mathbf{x}_*\|_2 \approx \sqrt{n}$.

\section{Numerical Studies}\label{sec:numericalstudies}

This section investigates the behavior of the proposed CUR-based preconditioning framework through a series of targeted numerical experiments. Our aim is not only to evaluate performance, but also to better understand how the key design components---namely rank adaptivity, schedule adaptivity, and the choice of implementation---affect convergence and computational cost.

We begin by isolating the roles of the two forms of adaptivity and examining their impact on convergence across different spectral regimes. We study the sensitivity of the method to target rank selection, highlighting how the adaptive framework mitigates performance degradation arising from under- or over-preconditioning. Then, we compare different implementations of the preconditioner, including the SVD-based and SVD-free variants, and assess their impact on computational cost and convergence behaviur. Detailed implementation aspects are described in \cref{app:numericalstudies_detail}.

Unless stated otherwise, all experiments use the following baseline configuration. We consider a \emph{consistent-x} setting with $\mtx{A}$ chosen as a dense incoherent matrix. The CUR augmentation block size is $b = n/50$, and the CUR tolerance is set to $\varepsilon_{\mathrm{cur}} = 30\mu$, with regularization parameter $\mu = 10^{-4}$. Preconditioner updates are triggered using $\nu_{\mathrm{prec}} = 10$.
For LSQR, we use a tolerance of $\varepsilon_{\mathrm{lsqr}} = 10^{-10}$ together with a dynamic stopping parameter $\nu_{\mathrm{lsqr}} = 100$. Sketching uses sparse sign embeddings with $\xi=8$ nonzeros per column; alternative sketching schemes are evaluated in \cref{sec:sketch}.

\subsection{Levels of adaptivity}
The proposed framework incorporates two distinct forms of adaptivity: (1) \emph{rank adaptivity}, achieved via iterative CUR approximation, and (2) \emph{schedule adaptivity}, achieved by alternating between preconditioning and LSQR phases.
\yn{YN; repetitive. Shrink for paper}
To isolate the role of each adaptivities, we consider three different algorithmic variants:
\begin{itemize}
    \item \textbf{Fixed-rank single-shot preconditioning}, where a prescribed target rank is used, the preconditioner is constructed once, and only a single solving phase is performed.
    \item \textbf{Rank-adaptive single-shot preconditioning}, where the target rank is determined adaptively, but the preconditioner is still constructed once and applied throughout a single solving phase.
    \item \textbf{Fully-adaptive preconditioning}, where both the target rank and the preconditioning schedule are adaptive, and preconditioning is interleaved with multiple LSQR solving phases.
\end{itemize}

Provided that an appropriate rank is given, all three variants are robust in the sense that they reliably reach the optimal residual much faster than unpreconditioned LSQR. Their differences lie instead in efficiency, sensitivity to rank misestimation and problem settings, and how computational effort is distributed over the course of the solve.

\subsection{Sensitivity to target rank selection}\label{sec:numstud_rankadaptive}

We first examine the effect of rank selection on solver performance, focusing on the impact of under- and overestimation of the numerical rank. Rank misestimation directly affects the quality of the CUR preconditioner and, in turn, the convergence behavior of LSQR.

Assuming a numerical rank of $0.2n$, we vary the target rank $\l=0.1n, 0.2n, 0.3n$ and run the fixed-rank single-shot variant of the algorithm, in which the CUR approximation and preconditioner are constructed once and used for a single LSQR phase. \cref{fig:restimes_rankest} shows the resulting   convergence versus wall-clock time, together with the corresponding preconditioned spectra.

\input{plot_tex/num_stud_rankest2.tex}

Preconditioning with underestimated target rank yields only transient improvement in convergence. While a small rank yields a short setup time and early progress, insufficient spectral flattening causes convergence to deteriorate rapidly. This effect is especially pronounced in the consistent-x case, where convergence is strongly influenced by the relative conditioning of the dominant singular components.

When the target rank is well matched to the numerical rank, the dominant singular values are effectively flattened and the remaining spectrum are well conditioned by regularization, resulting in rapid and sustained convergence. As shown in \cref{fig:restimes_rankest}, even when $\|\mathbf{e}\|_2 = 0$, the residual does not converge to zero. This is due to the application of regularization, which renders the problem effectively inconsistent, and the inherent ill-conditioning of $\mathbf{A}$. Specifically, in the consistent-b setting, $\vct{b}$ has significant components aligned with singular vectors corresponding to very small singular values, which limits the attainable residual norm due to finite-precision effects.

Overestimation has more problem-dependent effects. In consistent-b problems and in cases with smooth spectral decay, convergence remains robust and may even from the overestimation, reflecting the reduced condition number after preconditioning. In contrast, in th consistent-x case with sharp spectral decay, overestimation can significantly degrade convergence despite favorable conditioning. This behavior is examined in more detail below.
In addition, \cref{fig:restimes_rankest} shows that setup cost increases rapidly with target rank. Overestimation therefore often introduces substantial computational overhead without proportional gains in convergence. These results on misestimation of target ranks motivate the need for adaptive rank selection.




\subsubsection{Over-preconditioning and problem dependence}\label{sec:overpreconditioning}

The degradation observed under over-preconditioning in the consistent-x case can be attributed to the destruction of the spectral gap between dominant and trailing singular values. When the target rank exceeds the numerical rank in matrices with sharp spectral decay, the preconditioned spectrum becomes tightly clustered, impairing LSQR's ability to resolve dominant components efficiently.

In consistent-x problems, where the transformed solution $\vct{y}_\ast = \mtx{P}_{\l,\mu}\vct{x}_\ast$ is strongly aligned with the dominant singular subspace, this loss of spectral separation leads to poor Krylov subspace alignment and slow convergence. In contrast, in matrices with no sharp drop in spectral decay, or in consistent-b setting, the transformed solutions are approximately uniformly distributed across singular directions, and the convergence is driven primarily by reduced conditioned number rather than spectral gaps. Additional details are provided in \cref{sec:overpreconditioning_detail}.

This motivates the adaptive preconditioning and solving, where early LSQR phases operate on a preconditioned system that preserves sufficient spectral separation, while later phases refine the solution using warm starts. 

This analysis also explains why under-preconditioning is particularly detrimental in the consistent-x setting, where the convergence depends critically on relative conditioning among the dominant singular values, which remains poor when the rank is underestimated.

\jh{At the same time, the adaptive CUR approximation naturally limits unncessary rank growth, introducing additional preconditioning only when it is likely to be beneficial.}

\subsection{Fully adaptive preconditioning: Robustness and low setup cost}\label{sec:adaptiveprec_sensitivity}

The analysis above highlights the sensitivity of single-shot strategies to rank selection. Although rank-adaptive single-shot preconditioning can reliably construct an effective preconditioner, it may incur a large upfront cost when the chosen rank exceeds what is required for the desired accuracy. In practice, rank estimation is affected by hyperparameter choices, spectral perturbations arising from the CUR approximation, and inaccuracies in sketched error estimates, motivating the need for a more robust strategy that is less sensitive to target-rank misestimation.

The fully adaptive algorithm addresses this challenge by alternating between preconditioning and warm-started LSQR phases. This allows computational effort to be distributed over the solve, enabling LSQR to start immediately and achieve early reduction of the residual, while progressively refining the preconditioner only when necessary.

To isolate the effect of adaptive preconditioning, we compare the rank-adaptive single-shot preconditioning variant with fully-adaptive method. \cref{fig:acctimes8_adaptiveness2} shows that while the rank-adaptive single-shot variant guarantees robustness, it may incur a large setup cost before any convergence is observed. In contrast, the fully adaptive method amortises this cost by interleaving LSQR iterations with incremental preconditioner updates. This allows LSQR to start immediately and reach moderate accuracy early. These results indicate that adaptivity is not strictly necessary for ultimate convergence speed, but is essential for controlling setup cost and enabling rapid early progress when the appropriate preconditioning strength is not known in advance or when the requested accuracy is moderate.

\input{plot_tex/num_stud_singleshot}

\cref{fig:restimes_rankestfullyadaptive} further demonstrates that fully adaptive preconditioning significantly reduces sensitivity to rank misestimation. Underestimation still leads to slower convergence, but overestimation no longer causes severe degradation, even in the consistent-$\vct{x}$ case with sharp spectral decay. This robustness arises from warm-starting: later LSQR phases begin from iterates already well aligned with the solution subspace, limiting the adverse impact of over-preconditioning.

Overall, these results highlight the benefit of combining rank adaptivity with adaptive preconditioning schedules. The fully adaptive variant provides a robust default that achieves fast early progress, avoids excessive setup time, and remains reliable across problem types and spectral regimes. In practice, we therefore recommend using small block sizes and allowing the target rank to grow adaptively, which limits sketching cost and prevents premature over-preconditioning.

Nevertheless, even though overestimation of the target rank no longer leads to severe degradation in convergence, it remains undesirable from a computational standpoint. Excessive rank growth introduces unnecessary re-preconditioning steps, which delay the final LSQR phase and increase the overall runtime. Since the cost of preconditioning grows rapidly with the target rank, primarily due to dense SVD computations, avoiding excessive rank is important for maintaining efficiency, even when convergence robustness is preserved.

\subsection{SVD-free preconditioner}\label{sec:numstud_svdfree}

We compare the SVD-based and SVD-free constructions introduced in \cref{sec:svdfreepreconditioner}. The SVD-free variant replaces the spectral inverse obtained via SVD with a two–triangular-solve formulation, removing the cubic cost associated with repeated SVD updates.

As shown in \cref{fig:restimes_svdfree}, both variants provide comparable preconditioning quality across regularization levels and spectral regimes. However, the SVD-based update becomes increasingly expensive as the target rank grows, whereas the SVD-free construction maintains a nearly stable update cost throughout the adaptive process. Thus, the SVD-free approach eliminates the SVD bottleneck without sacrificing convergence.

\input{plot_tex/num_stud_svdfree}

While the SVD-based and SVD-free preconditioners are mathematically equivalent in the unregularized setting, their constructions differ slightly for regularized problems. The SVD-based approach applies iterative CUR to $\mtx{A}$ and incorporates $\mu$ explicitly into the preconditioner, whereas the SVD-free variant applies CUR directly to $\mtx{A}_\mu$, since there is no straightforward way to form a regularized SVD-free preconditioner explicitly. Consequently, in the regularized experiments of \cref{fig:restimes_svdfree}, the selected target ranks may differ between the two constructions.

\section{Numerical Experiments}\label{sec:benchmark}
This section evaluates the performance of \mainalg{} in comparison with existing randomized preconditioned solvers for large-scale regularized linear least-squares problems. The goal of the experiments is not only to compare time-to-residual, but also to assess robustness, scalability, and overall solver behavior across challenging problem regimes. The experiments are designed to assess how effectively each method balances preconditioner construction cost against solver convergence, particularly when the numerical rank and spectral structure of the problem are not known a priori.

The experiments are structured as progressive stress tests. We first examine general convergence behavior, then extreme ill-conditioning and vanishing regularization, and finally large-scale sparse problems. The experiments are done on broad test sets covering variations in problem consistency, spectral decay, matrix coherence and dimension. 

\subsection{Methods compared}
We compare two variants of our algorithm---\mainalg{} (SVD-based) and \mainalg{}-sf (SVD-free)---against a set of representative baseline methods:
\begin{itemize}
    \item LSQR: Unpreconditioned LSQR, serving as a baseline iterative solver.
    \item Nystr\"om PCG (NPCG): Adaptive low-rank Nystr\"om preconditioning applied to CG on normal equations.
    \item Blendenpik: Randomized QR-based preconditioning for LSQR.
\end{itemize}
Unless stated otherwise, all methods are tuned using recommended or commonly used parameter settings, and are run until either the target residual is reached or the residual convergence is stationary. Both \mainalg{} and Nystr\"om PCG use a block size (i.e., initial sketch dimension) of $n/50$. Nystr\"om PCG progressively doubles the sketch dimension until the approximation meets the prescribed tolerance. For fair comparison, we set the sketch dimension in Blendenpik to $\gamma n$ with $\gamma = 1.2$, rather than the commonly recommended range $\gamma \in [3, 5]$.
\subsection{Fast initial convergence via adaptive preconditioning}
We begin by examining relative residual versus wall-clock time across representative problem settings.

\input{plot_tex/compare_restime}

\cref{fig:restimes_varioussettings} shows that \mainalg{} consistently achieves rapid initial residual reduction while maintaining competitive long-term convergence. Unpreconditioned LSQR often stagnates and fails to attain higher accuracy, and static preconditioning approaches—such as Nystr\"om PCG and Blendenpik—incur
substantial upfront cost for preconditioner construction before Krylov convergence begins. In contrast, \mainalg{} interleaves low-rank preconditioning updates with warm-started LSQR phases. This enables immediate progress and allows computational effort to scale naturally with the requested accuracy. The behaviour is consistent across coherent and incoherent matrices, consistent and inconsistent systems, varying aspect ratios, and both sharp and smooth spectral decay (see Appendix~\ref{app:benchmark_general}). 

Our algorithm remains competitive even in highly overdetermined regimes (\cref{fig:restimes_varioussettings}c), where Blendenpik is typically strong due to its sketch size scaling primarily with the column dimension. One might expect row- and column-sampling preconditioners to struggle on coherent matrices. However, (\cref{fig:restimes_varioussettings}d) indicates that \mainalg{} is largely insensitive to coherence. This robustness stems from its data-aware pivoted selection strategy: rather than sampling uniformly, it identifies informative rows and columns, thereby exploiting coherence instead of suffering from it. Performance remains similarly stable under different spectral decay patterns.

Additionally, we can observe that Blendenpik converges slowly even after full preconditioning. As discussed in \cref{sec:overpreconditioning}, this effect is likely linked to the loss of spectral separation induced by full-rank preconditioning. Nystr\"om PCG avoids this specific degradation by aggressively clustering the spectrum (often achieving $\kappa(A_\mu P_\mu^{-1}) \approx 1.1$ (see \cref{fig:restimes_varioussettings}a)), but at the cost of a substantial setup phase. Although \mainalg{} typically produces less extreme conditioning, its adaptive scheduling compensates in practice.

\subsection{Robustness under extreme ill-conditioning}

We next evaluate the performance of \mainalg{} under extreme ill-conditioning, where the condition number reaches $\kappa(\mtx{A}) \approx 10^{15}$, with varying levels of regularization.

Under moderate ill-conditioning ($\kappa=10^7$), all preconditioners perform well (\cref{fig:restimes_ill}a). However, in the extreme case (\cref{fig:restimes_ill}b), Nystr\"om PCG fails due to its reliance on the Gram matrix $A^\top A$, whose condition number satisfies $\kappa(A^\top A) = \kappa(A)^2$. This squaring effect makes the method highly sensitive to extreme ill-conditioning.

\input{plot_tex/compare_restime_ill}

In \cref{fig:restimes_ill}, the regularization parameter $\mu$ is chosen so that $\mathrm{rank}(\mtx{A},\mu) \approx 20n$. Since regularization itself alleviates ill-conditioning, we now examine how each method behaves when regularizatiion parameter is gradually reduced. \cref{fig:restimes_reg} reports experiments on $6000 \times 5000$ matrices with sharp smooth decay and condition number $\kappa = 10^{15}$, using regularization levels $\mu = 10^{-4}, 10^{-6}, 10^{-8}$, as well as the unregularised case. For the unregularized problem, we terminate CUR at tolerance $\varepsilon_{\mathrm{cur}} = 3 \cdot 10^{-7}$ to enforce a low-rank preconditioner; otherwise the algorithm will be ran until the full-rank preconditioner is constructed. In the regularized cases, we use $\varepsilon_{\mathrm{cur}} = 30\mu$, consistent with earlier experiments.

The results reveal a clear hierarchy of robustness. Nystr\"om PCG operates only when the regularization is sufficiently large; otherwise it fails to converge or encounters numerical breakdown. Blendenpik performs reliably down to $\mu = 10^{-8}$ but fails in the unregularized setting. In contrast, \mainalg{} remains robust even without regularization and consistently achieves rapid convergence to the optimal residual across all levels of regularization. Because it avoids forming normal equations and progressively modifies only dominant singular values, it is significantly less sensitive to extreme spectral conditioning.

\input{plot_tex/compare_restime_reg}

\subsection{Sparse large-scale performance}

A key practical advantage of the CUR-based construction is that the factors $\mtx{C}$ and $\mtx{R}$ inherit the sparsity pattern of the original matrix $\mtx{A}$, since they consist of selected columns and rows of $\mtx{A}$. As a result, both the construction and the application of the preconditioner can exploit sparsity directly. In particular, matrix–vector products involving the preconditioner scale with the number of nonzeros in the selected rows and columns, rather than with the ambient dimensions.

To evaluate performance in the sparse regime, we consider synthetic matrices with $1\%$ density (generated using \texttt{sprandn}) and varying dimensions. All sketch-based methods employ 
sparse sign embeddings to ensure fairness. Empirically, the behavior of Blendenpik and Nystr\"om PCG under sparse embeddings is similar to that observed with Gaussian or SRFT sketches, but with reduced sketching cost.

\input{plot_tex/compare_restime_sparse}

\cref{fig:restimes_sparse} demonstrate a clear advantage for \mainalg. \mainalg{} is showing particularly fast convergence for large-scale sparse problems, where it combines low sketching cost and sparsity-aware preconditioner construction. Performance remains consistent across different problem settings. Nystr\"om PCG initially benefits from sparsity when sketching, but forming the Gram matrix destroys sparsity and increases computational cost. Blendenpik relies on dense QR factorizations and therefore does not fully exploit sparse structure. Even in highly overdetermined regimes, where Blendenpik is typically competitive, \mainalg{} reaches the optimal residual significantly faster.

Overall, the experiments in this section demonstrate that \mainalg{} combines adaptive spectral correction, numerical robustness under extreme ill-conditioning, and efficient exploitation of sparsity, making it well-suited for large-scale least-squares problems across diverse regimes.
\yn{YN: the experiments are very good, but it felt the test matrices are contrived. Would be nice to have at least one set of real-world examples.}

\section*{Acknowledgments}
This work was supported by the Additional Funding Programme for Mathematical Sciences, delivered by EPSRC (EP/V521917/1), 
the EPSRC grant EP/Y030990/1, 
and the Heilbronn Institute for Mathematical Research.

\bibliographystyle{siamplain}
\bibliography{references}

\appendix
\input{appendices.tex}

\end{document}

%% file: plot_tex/num_stud_rankest2.tex

\begin{figure}[!htbp]
  \centering
  \begin{subfigure}{0.33\textwidth}
    \centering
    \includegraphics[width=\linewidth,height=0.28\textheight,keepaspectratio]
      {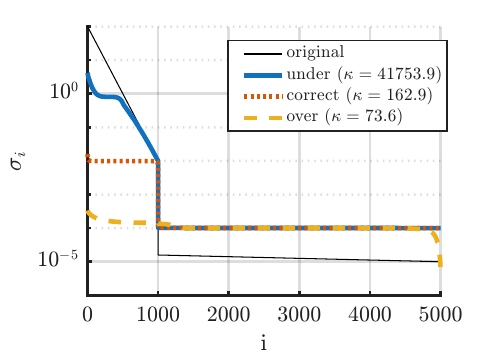}
    \caption{$\sigma_i(\mtx{AP}^{-1})$\\sharp decay}
  \end{subfigure}\hfill
  \begin{subfigure}{0.33\textwidth}
    \centering
    \includegraphics[width=\linewidth,height=0.28\textheight,keepaspectratio]
      {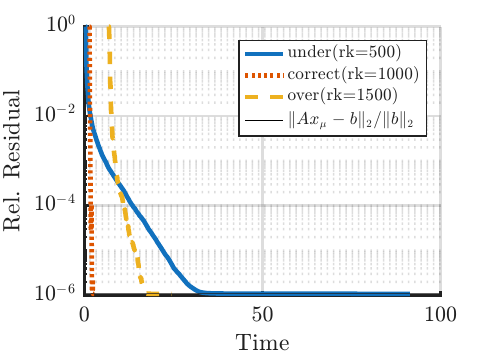}
    \caption{consistent-x\\sharp decay}
  \end{subfigure}\hfill
  \begin{subfigure}{0.33\textwidth}
    \centering
    \includegraphics[width=\linewidth,height=0.28\textheight,keepaspectratio]
      {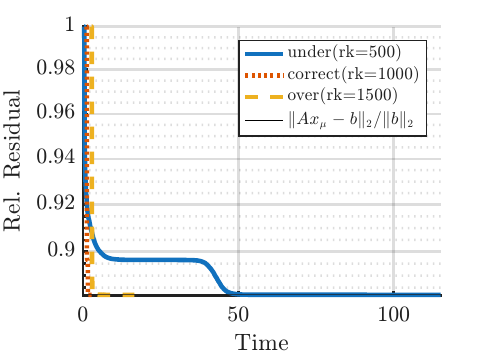}
    \caption{consistent-b\\sharp decay}
  \end{subfigure}\hfill

  \begin{subfigure}{0.33\textwidth}
    \centering
    \includegraphics[width=\linewidth,height=0.28\textheight,keepaspectratio]
      {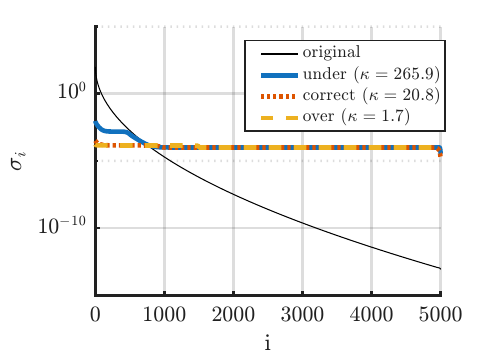}
    \caption{$\sigma_i(\mtx{AP}^{-1})$\\smooth decay}
  \end{subfigure}\hfill
  \begin{subfigure}{0.33\textwidth}
    \centering
    \includegraphics[width=\linewidth,height=0.28\textheight,keepaspectratio]
      {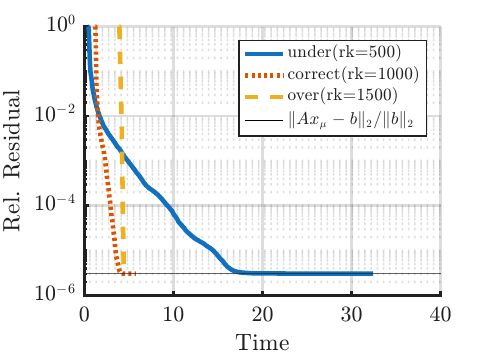}
    \caption{consistent-x\\smooth decay}
  \end{subfigure}\hfill
  \begin{subfigure}{0.33\textwidth}
    \centering
    \includegraphics[width=\linewidth,height=0.28\textheight,keepaspectratio]
      {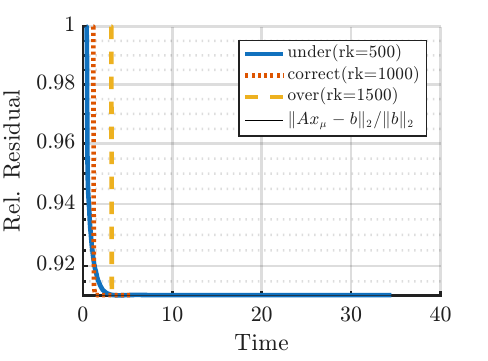}
    \caption{consistent-b\\smooth decay}
  \end{subfigure}\hfill

  \caption{Relative residual convergences for different target ranks using fully-fixed variant, where the target rank is determined by the block size to be 10\%, 20\%, 30\% of the dimension. Subfigures~(a),~(b), and~(c) are on the same dense $6000 \times 5000$ coherent matrix with sharp spectral decay, and subfigures~(d),~(e), and~(f) are on the same dense $6000 \times 5000$ coherent matrix, but with smooth spectral decay. The right-hand side vector $\vct{b}$ differs across problem settings: consistent-x (subfigures~(b),~(e)), consistent-b (subfigures~(c),~(f)).}
  \label{fig:restimes_rankest}
\end{figure}

%% file: plot_tex/num_stud_singleshot.tex
\begin{figure}[!htbp]
  \centering

  \begin{subfigure}{0.25\textwidth}
    \centering
    \includegraphics[width=\linewidth,height=0.28\textheight,keepaspectratio]
      {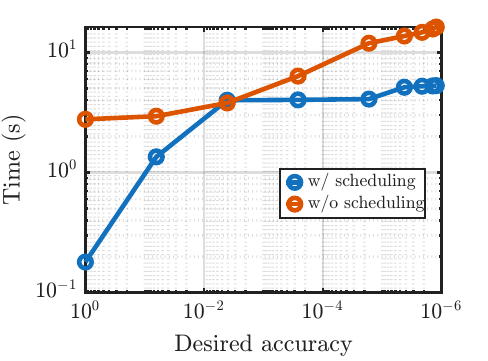}
    \caption{consistent-x\\sharp decay}
  \end{subfigure}\hfill
  \begin{subfigure}{0.25\textwidth}
    \centering
    \includegraphics[width=\linewidth,height=0.28\textheight,keepaspectratio]
      {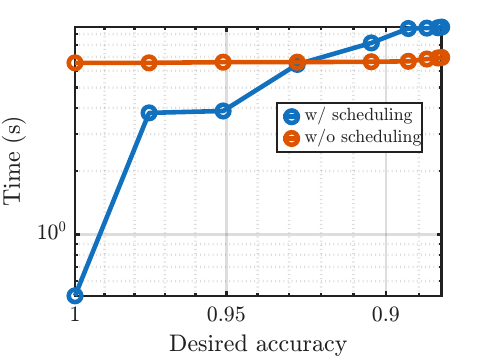}
    \caption{consistent-b\\sharp decay}
  \end{subfigure}\hfill
  \begin{subfigure}{0.25\textwidth}
    \centering
    \includegraphics[width=\linewidth,height=0.28\textheight,keepaspectratio]
      {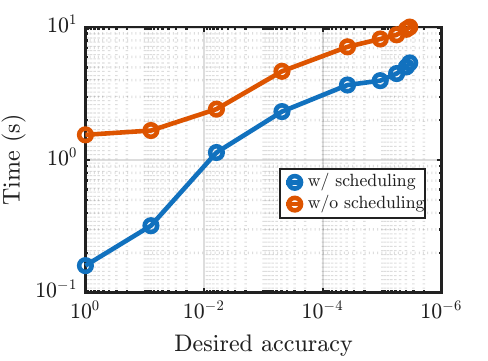}
    \caption{consistent-x\\smooth decay}
  \end{subfigure}\hfill
  \begin{subfigure}{0.25\textwidth}
    \centering
    \includegraphics[width=\linewidth,height=0.28\textheight,keepaspectratio]
      {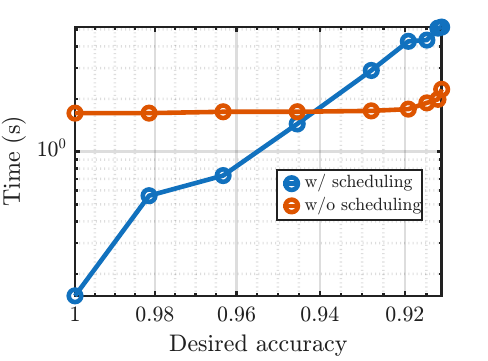}
    \caption{consistent-b\\smooth decay}
  \end{subfigure}\hfill
  
  \caption{Time to reach specific accuracy. The slower convergence of the single-shot variant in subfigures~(a) and~(c) is attributable to problem-dependent effects discussed in Section~\ref{sec:overpreconditioning}. While the single-shot variant can achieve comparable or even faster total time-to-solution in favourable cases, this comes at the expense of substantial upfront preconditioning. In contrast, the proposed fully adaptive algorithm is more robust to problem structure, consistently achieves earlier residual reduction, and avoids excessive setup costs, enabling it to reach moderate accuracy levels significantly faster across a wide range of settings.}
  \label{fig:acctimes8_adaptiveness2}
\end{figure}

\begin{figure}[!htbp]
  \centering

  \begin{subfigure}{0.25\textwidth}
    \centering
    \includegraphics[width=\linewidth,height=0.28\textheight,keepaspectratio]
      {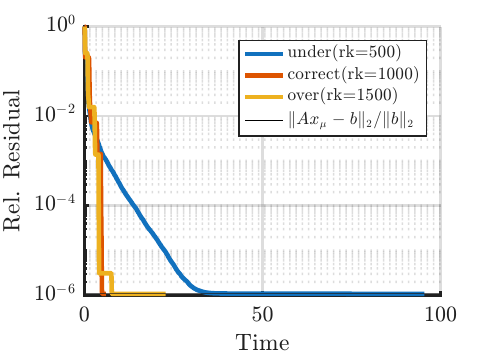}
    \caption{consistent-x\\sharp decay}
  \end{subfigure}\hfill
  \begin{subfigure}{0.25\textwidth}
    \centering
    \includegraphics[width=\linewidth,height=0.28\textheight,keepaspectratio]
      {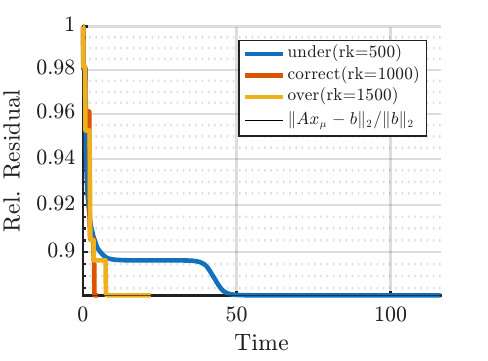}
    \caption{consistent-b\\sharp decay}
  \end{subfigure}\hfill
  \begin{subfigure}{0.25\textwidth}
    \centering
    \includegraphics[width=\linewidth,height=0.28\textheight,keepaspectratio]
      {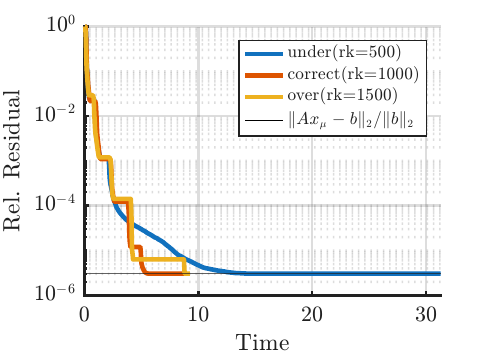}
    \caption{consistent-x\\smooth decay}
  \end{subfigure}\hfill
  \begin{subfigure}{0.25\textwidth}
    \centering
    \includegraphics[width=\linewidth,height=0.28\textheight,keepaspectratio]
      {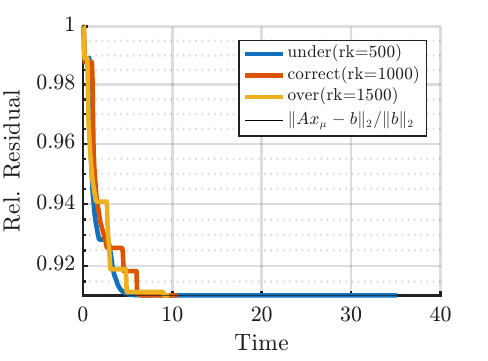}
    \caption{consistent-b\\smooth decay}
  \end{subfigure}\hfill
  
  \caption{Relative residual vs. wall-clock time. Fixing the block size at $n/50$ and varying the number of outer iterations (5, 10, 15), we reach the same target ranks as in Figure~\ref{fig:acctimes8_adaptiveness2}, but through pregressive preconditioner refinement. }
  \label{fig:restimes_rankestfullyadaptive}
\end{figure}

%% file: plot_tex/num_stud_svdfree.tex
\begin{figure}[!htbp]
  \centering

  \begin{subfigure}{0.33\textwidth}
    \centering
    \includegraphics[width=\linewidth,height=0.28\textheight,keepaspectratio]
      {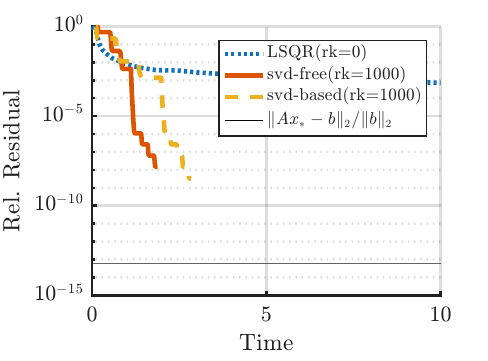}
    \caption{sharp decay\\no regularization\\$\kappa=10^7$}
  \end{subfigure}\hfill
  \begin{subfigure}{0.33\textwidth}
    \centering
    \includegraphics[width=\linewidth,height=0.28\textheight,keepaspectratio]
      {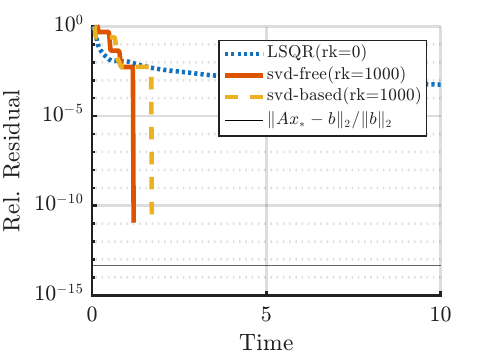}
    \caption{sharp decay\\no regularization\\$\kappa=10^{15}$}
  \end{subfigure}\hfill
  \begin{subfigure}{0.33\textwidth}
    \centering
    \includegraphics[width=\linewidth,height=0.28\textheight,keepaspectratio]
      {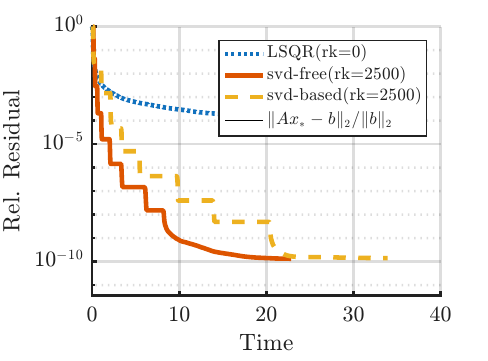}
    \caption{smooth decay\\no regularization\\$\kappa=10^{15}$}
  \end{subfigure}\hfill
  \begin{subfigure}{0.33\textwidth}
    \centering
    \includegraphics[width=\linewidth,height=0.28\textheight,keepaspectratio]
      {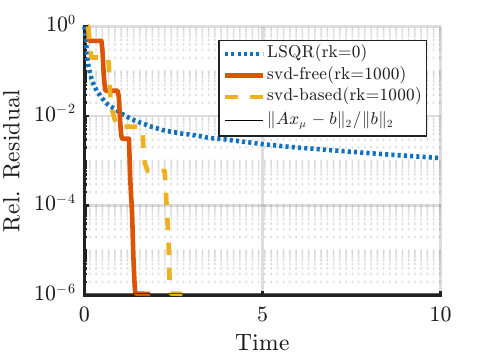}
    \caption{sharp decay\\$\mu = 10^{-4}$\\$\kappa=10^7$}
  \end{subfigure}\hfill
  \begin{subfigure}{0.33\textwidth}
    \centering
    \includegraphics[width=\linewidth,height=0.28\textheight,keepaspectratio]
      {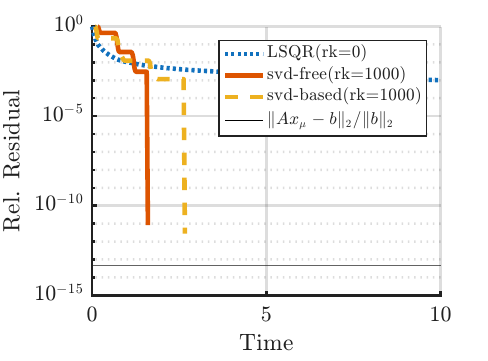}
    \caption{sharp decay\\$\mu = 10^{-10}$\\$\kappa=10^{15}$}
  \end{subfigure}\hfill
  \begin{subfigure}{0.33\textwidth}
    \centering
    \includegraphics[width=\linewidth,height=0.28\textheight,keepaspectratio]
      {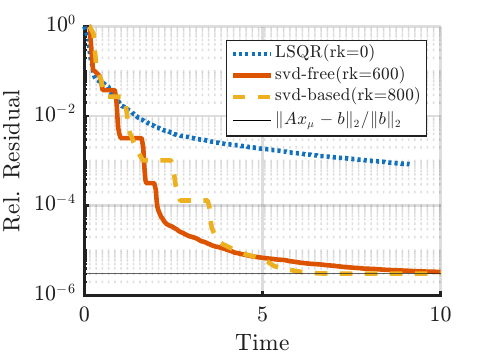}
    \caption{smooth decay\\$\mu = 10^{-4}$\\$\kappa=10^{15}$}
  \end{subfigure}\hfill
  
  \caption{Relative residual vs. wall-clock time. All problems are consistent-x problems with different spectral decays and prescribed regularization parameter $\mu$. For unregularized problems, we are controlling $\varepsilon_\mathrm{cur}$ so that the preconditioning remains low-ranked. (a), (b), and (c) are on unregularized problems. (d), (e), and (f) are on regularized problems.}
  \label{fig:restimes_svdfree}
\end{figure}

%% file: plot_tex/compare_restime.tex
\begin{figure}[!htbp]
  \centering
  \begin{subfigure}{0.33\textwidth}
    \centering
    \includegraphics[width=\linewidth,height=0.28\textheight,keepaspectratio]
        {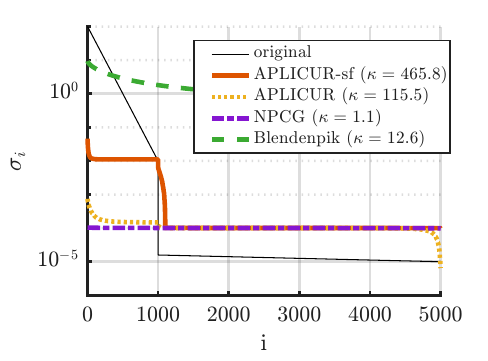}
    \caption{Spectrum of $\mtx{A_{\mu}P_\mu}^{-1}$\\(default problem)}
  \end{subfigure}\hfill
  \begin{subfigure}{0.33\textwidth}
    \centering
    \includegraphics[width=\linewidth,height=0.28\textheight,keepaspectratio]
      {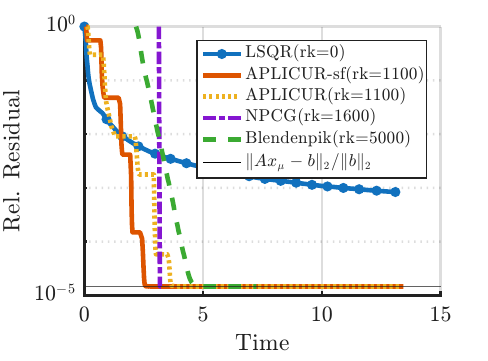}
    \caption{Default\\problem setting}
  \end{subfigure}\hfill
  \begin{subfigure}{0.33\textwidth}
    \centering
    \includegraphics[width=\linewidth,height=0.28\textheight,keepaspectratio]
      {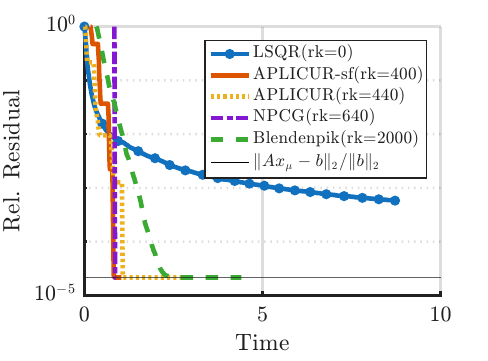}
    \caption{Highly\\overdetermined setting}
  \end{subfigure}\hfill

  \begin{subfigure}{0.33\textwidth}
    \centering
    \includegraphics[width=\linewidth,height=0.28\textheight,keepaspectratio]
      {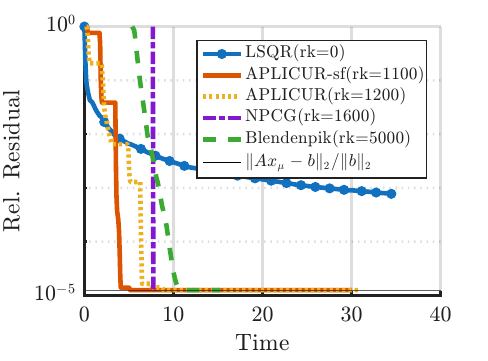}
    \caption{Coherent matrix}
  \end{subfigure}\hfill
  \begin{subfigure}{0.33\textwidth}
    \centering
    \includegraphics[width=\linewidth, height=0.28\textheight, keepaspectratio]
      {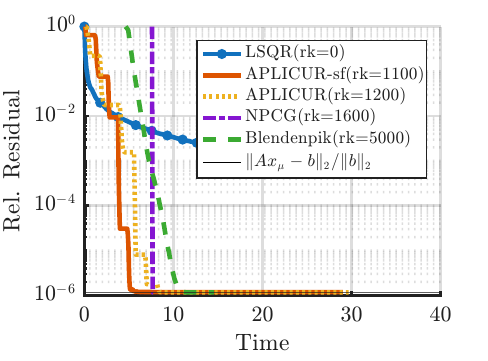}
    \caption{Consistent system}
  \end{subfigure}\hfill
  \begin{subfigure}{0.33\textwidth}
    \centering
    \includegraphics[width=\linewidth, height=0.28\textheight, keepaspectratio]
      {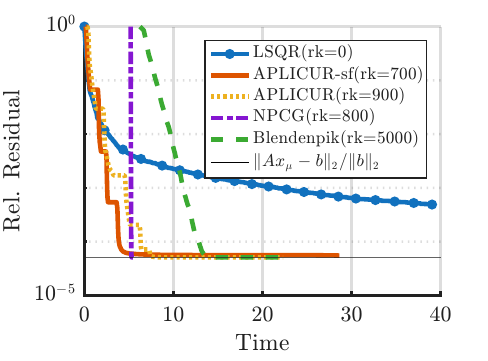}
    \caption{Smooth spectral decay}
  \end{subfigure}\hfill

  \caption{Relative residual versus wall-clock time for LSQR, Nyström PCG, Blendenpik, and \mainalg. (a, b) Default problem: moderately overdetermined ($6000\times5000$), sharp spectral decay ($\kappa=10^7$), incoherent and inconsistent ($10^{-2}$ noise), with regularization $\mu=10^{-4}$. (c–f) Experiments varying one property at a time: (c) highly overdetermined ($15000\times2000$); (d) coherent; (e) consistent (noise-free); (f) smooth spectral decay with $\kappa=10^{15}$.}
  \label{fig:restimes_varioussettings}
\end{figure}

%% file: plot_tex/compare_restime_ill.tex
\begin{figure}[!htbp]
  \centering
  \begin{subfigure}{0.48\textwidth}
    \centering
    \includegraphics[width=\linewidth,height=0.28\textheight,keepaspectratio]
      {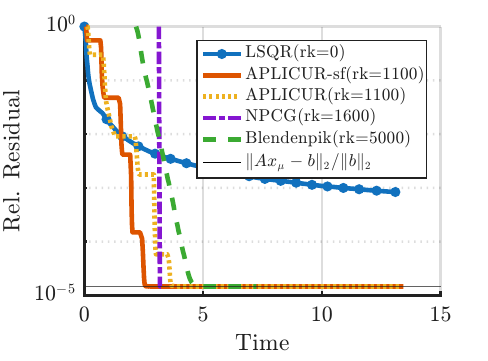}
    \caption{$\;\kappa = 10^7, \mu = 10^{-4}\;$}
  \end{subfigure}\hfill
  \begin{subfigure}{0.48\textwidth}
    \centering
    \includegraphics[width=\linewidth,height=0.28\textheight,keepaspectratio]
      {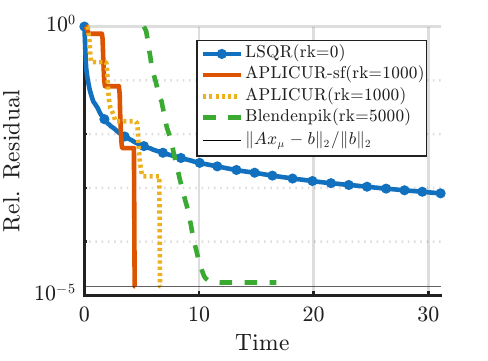} 
    \caption{$\kappa = 10^{15}, \mu = 10^{-8}$}
  \end{subfigure}\hfill
  
  \caption{Relative residual versus wall-clock time for LSQR, Nyström PCG, Blendenpik, and \mainalg on a consistent-$x$ problem with $10^{-2}$ noise, using an incoherent $6000 \times 5000$ matrix with sharp spectral decay.}
  \label{fig:restimes_ill}
\end{figure}

%% file: plot_tex/compare_restime_reg.tex
\begin{figure}[!htbp]
  \centering

    \begin{subfigure}{0.24\textwidth}
    \centering
    \includegraphics[width=\linewidth,height=0.28\textheight,keepaspectratio]
      {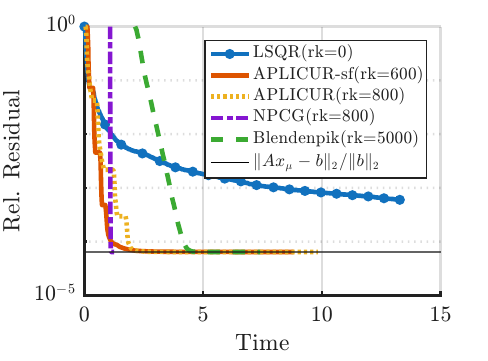}
    \caption{$\mu = 10^{-4}$}
  \end{subfigure}\hfill
  \begin{subfigure}{0.24\textwidth}
    \centering
    \includegraphics[width=\linewidth,height=0.28\textheight,keepaspectratio]
      {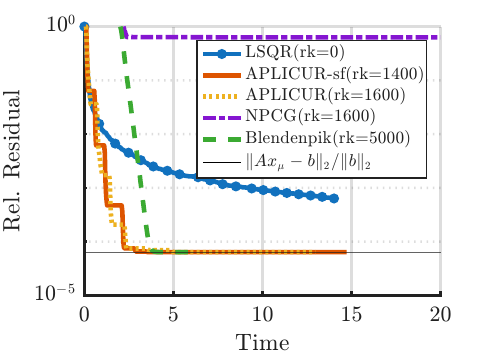}
    \caption{$\mu = 10^{-6}$}
  \end{subfigure}\hfill
  \begin{subfigure}{0.24\textwidth}
    \centering
    \includegraphics[width=\linewidth,height=0.28\textheight,keepaspectratio]
      {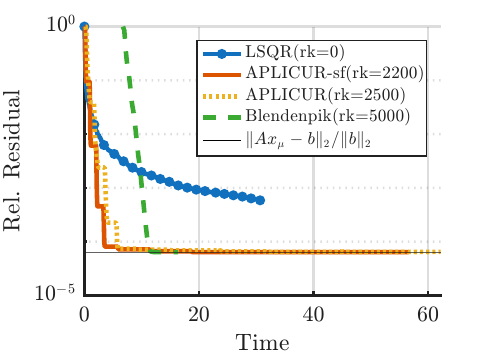}
    \caption{$\mu = 10^{-8}$}
  \end{subfigure}\hfill
  \begin{subfigure}{0.24\textwidth}
    \centering
    \includegraphics[width=\linewidth,height=0.28\textheight,keepaspectratio]
      {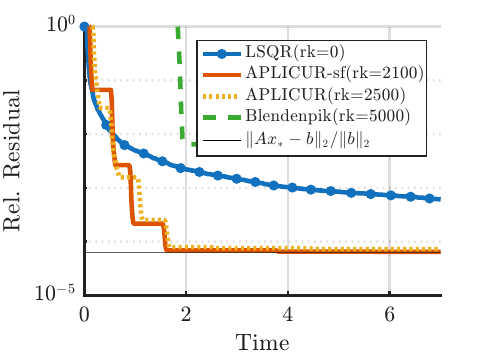}
    \caption{$\mu = 0$}
  \end{subfigure}\hfill
  
  \caption{Relative residual versus wall-clock time for LSQR, Nyström PCG, Blendenpik, and \mainalg on a consistent-$x$ problem with $10^{-2}$ noise, using an incoherent $6000 \times 5000$ matrices with smooth spectral decay ($\kappa = 10^{15}$), under varying levels of regularization.}
  \label{fig:restimes_reg}
\end{figure}

%% file: plot_tex/compare_restime_sparse.tex
\begin{figure}[!htbp]
  \centering

  \begin{subfigure}{0.24\textwidth}
    \centering
    \includegraphics[width=\linewidth,height=0.28\textheight,keepaspectratio]
      {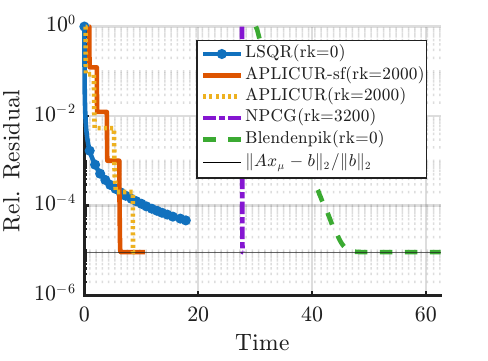}
    \caption{Moderately overdetermined,\\sharp decay}
  \end{subfigure}\hfill
  \begin{subfigure}{0.24\textwidth}
    \centering
    \includegraphics[width=\linewidth,height=0.28\textheight,keepaspectratio]
      {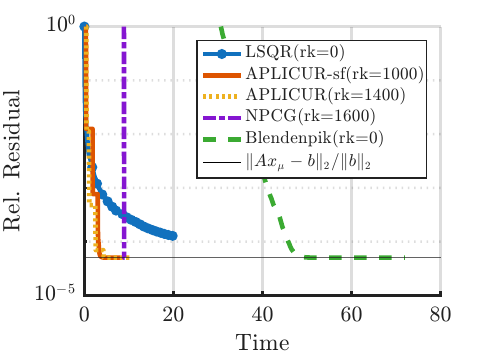}
    \caption{Moderately overdetermined,\\smooth decay}
  \end{subfigure}\hfill
    \begin{subfigure}{0.24\textwidth}
    \centering
    \includegraphics[width=\linewidth,height=0.28\textheight,keepaspectratio]
      {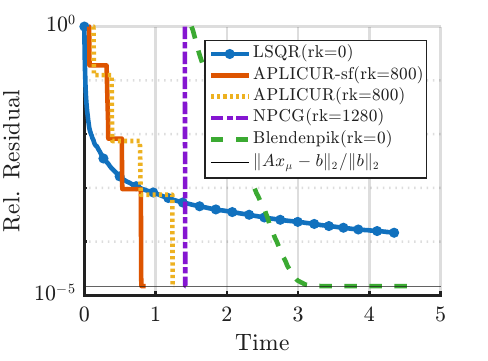}
    \caption{Highly overdetermined,\\sharp decay}
  \end{subfigure}\hfill
  \begin{subfigure}{0.24\textwidth}
    \centering
    \includegraphics[width=\linewidth,height=0.28\textheight,keepaspectratio]
      {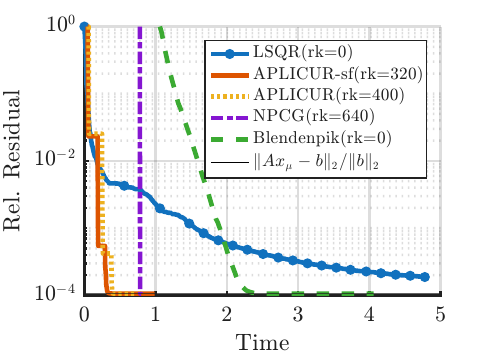}
    \caption{Highly overdetermined,\\smooth decay}
  \end{subfigure}\hfill
  
  \caption{Relative residual versus wall-clock time for LSQR, Nyström PCG, Blendenpik, and \mainalg on moderately overdetermined ($12000\times 10000$) and highly overdetermined ($30000 \times 4000$) matrices.}
  \label{fig:restimes_sparse}
\end{figure}

%% file: appendices.tex
\section{Proofs}
This appendix provides proofs of the theoretical results supporting the main text

\subsection{Proof of Theorem~\ref{thm:conditionnumberbound}}\label{proof:conditionnumberbound}

\begin{proof}
We first analyze the idealized system based on $\mtx{\widehat A}_\l$ and then account for the perturbation.
Let $\mtx{\widehat A}_\l = \mtx{\widehat U \widehat \Sigma \widehat V^\top}$ be the SVD of $\mtx{\widehat{A}}$.

\paragraph{Step 1: Idealized system}

Define
\[
\mtx{\widehat A}_{\l,\mu} := \begin{bmatrix} \mtx{\widehat A}_{\l} \\ \mu \mtx{I} \end{bmatrix}.
\]
Let
\[
\mtx{\widehat\Sigma}_{\mathrm{full}} =
\begin{bmatrix}
\mtx{\widehat\Sigma} & \\
& \mtx{0}
\end{bmatrix}, 
\quad
\mtx{\widehat V}_{\mathrm{full}} = [\mtx{\widehat V} \ \mtx{\widehat V}_\perp].
\]

By the construction of the preconditioner,
\begin{align*}
\mtx{P}_{\l,\mu}^{-\top}\mtx{\widehat A}_{\l,\mu}^\top \mtx{\widehat A}_{\l,\mu} \mtx{P}_{\l,\mu}^{-1}
&= \mtx{P}_{\l,\mu}^{-\top}(\mtx{\widehat A}_\l^\top \mtx{\widehat A}_{\l} + \mu^2 \mtx{I})\mtx{P}_{\l,\mu}^{-1} \\
&= \mtx{\widehat V}_{\mathrm{full}}
\begin{bmatrix}
(\widehat\sigma^2+\mu^2) \mtx{I}_\ell & \\
& \mu^2 \mtx{I}_{n-\l}
\end{bmatrix}
\mtx{\widehat V}_{\mathrm{full}}^\top.
\end{align*}

Hence, the singular values of $\mtx{\widehat A}_{\l,\mu} \mtx{P}_{\l,\mu}^{-1}$ are
\[
\sqrt{\widehat\sigma^2 + \mu^2} \quad (\ell \text{ times}), 
\qquad
\mu \quad (n-\ell \text{ times}).
\]

In other words, $\mtx{P}_{\l,\mu}^{-1}$ rescales only the $\mtx{\widehat V}$-subspace and acts as an isometry on its orthogonal complement.

\paragraph{Step 2: Perturbation}

Write
\[
\mtx{A}_\mu \mtx{P}_{\l,\mu}^{-1}
= \mtx{\widehat A}_{\l,\mu} \mtx{P}_{\l,\mu}^{-1}
+ \underbrace{\begin{bmatrix} \mtx{E} \\ \mtx{0} \end{bmatrix} \mtx{P}_{\l,\mu}^{-1}}_{:=\mtx{\Delta}}.
\]

Using $\svhat \le \sqrt{\widehat\sigma_\ell^2 + \mu^2}$, we have $\|\mtx{P}_{\l,\mu}^{-1}\|_2 \le 1$, and therefore
$\|\mtx{\Delta}\|_2 \le \|\mtx{E}\|_2$.
Applying Weyl's inequality gives
\begin{align*}
\sigma_{\max}(\mtx{A}_\mu \mtx{P}_{\l,\mu}^{-1})
&\le \sqrt{\widehat\sigma^2 + \mu^2} + \|\mtx{E}\|_2,\\
\sigma_{\min}(\mtx{A}_\mu \mtx{P}_{\l,\mu}^{-1})
&\ge \mu - \|\mtx{E}\|_2.
\end{align*}

\paragraph{Step 3: Condition number}

If $\|\mtx{E}\|_2 < \mu$, then
\[
\kappa(\mtx{A}_\mu \mtx{P}_{\l,\mu}^{-1})
\le \frac{\sqrt{\widehat\sigma^2 + \mu^2} + \|\mtx{E}\|_2}{\mu - \|\mtx{E}\|_2}.
\]
\end{proof}

\subsection{Proof of Theorem~\ref{thm:cvgbound_disjoint}}\label{app:twointervalbound}

The classical Chebyshev bound depends only on the global condition number $\kappa(\mathbf{A})$ and is therefore often pessimistic for systems with clustered spectra.

Assume that the dominant singular values lie in a narrow interval:
\[
\sigma_1^2, \ldots, \sigma_\ell^2 \in [\svhat^2 - \varepsilon,\ \svhat^2 + \varepsilon],
\]
while the remaining singular values lie below $\sigma_{\ell+1}$.
Thus, the spectrum is contained in two disjoint intervals:
\[
[\sigma_n^2, \sigma_{\ell+1}^2]
\cup
[\sigma_\ell^2, \sigma_1^2].
\]

To exploit this structure, we bound the LSQR residual via polynomial approximation over the union
\[
\mathcal{S}
:=
[a,b] \cup [c,d] \supseteq [\sigma_n^2, \sigma_{\ell+1}^2]
\cup
[\sigma_\ell^2, \sigma_1^2],
\]
where both intervals of $\mathcal{S}$ have equal length $w$.

\begin{lemma}[\cref{thm:cvgbound_disjoint}]
Under the above assumptions, the LSQR residual after the $k$th iterate $\vct{x}_k$ satisfies
\[
\|\mtx{U}^\top(\mtx{A}\vct{x}_k - \vct{b})\|_2
\le
\left(\frac{\sqrt{C}-1}{\sqrt{C}+1}\right)^{\lfloor k/2 \rfloor},
\]
where $\mtx{U}$ is the left singular vector matrix of $\mtx{A}$ and
\[
C
=
\kappa_{\ell+1}^2 + \frac{w}{\sigma_n^2}.
\]
\end{lemma}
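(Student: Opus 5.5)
The starting point is the polynomial characterization \eqref{eq:weightedsum}: it suffices to exhibit, for each $k$, a polynomial $p$ of degree at most $k$ with $p(0)=1$ and $\max_{x\in\mathcal{S}}|p(x)| \le \bigl(\frac{\sqrt{C}-1}{\sqrt{C}+1}\bigr)^{\lfloor k/2\rfloor}$, where $\mathcal{S}=[a,b]\cup[c,d]$ has $b-a=d-c=w$, $a=\sigma_n^2$, and $[a,b]$ covers the trailing squared singular values. (I read the bound as multiplied by $\|\mtx{U}^\top \vct{r}_0\|_2$, as in \eqref{eq:convergence_quadbound}.) Since degree $k\ge 2\lfloor k/2\rfloor$, it is enough to build a polynomial of degree $2j$ with $j=\lfloor k/2\rfloor$.

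The construction is the standard quadratic map. Set $q(x)=(x-b)(x-c)$, which is symmetric about the midpoint $(b+c)/2$. Because the two intervals have equal length, $q$ maps both $[a,b]$ and $[c,d]$ onto the same interval $[\alpha,\beta]$. Here $\beta=q(b)=q(c)=0$ and $\alpha=q(a)=q(d)=(a-b)(a-c)$. Since $a<b<c$, the map sends $\mathcal{S}$ into $[\alpha',0]$ with $\alpha' = -(b-a)(c-a)=-w(c-a)<0$; more precisely $q(\mathcal{S})=[-w(c-a),\,0]$. Note that $q(0)=bc>0$, so the point $0$ is mapped outside this interval. Now take
\[
p(x)=\frac{T_j\bigl(\ell(q(x))\bigr)}{T_j\bigl(\ell(q(0))\bigr)},
\]
where $\ell$ is the affine map sending $[-w(c-a),0]$ onto $[-1,1]$ and $T_j$ is the Chebyshev polynomial. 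Then $\deg p = 2j$, $p(0)=1$, and $\max_{\mathcal{S}}|p|\le 1/|T_j(\ell(q(0)))|$.

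Next I evaluate the Chebyshev growth. The interval $[-w(c-a),0]$ together with the point $bc$ is an affine image of $[\lambda_{\min},\lambda_{\max}]$ and $0$, with effective condition ratio
\[
\tilde C=\frac{bc+w(c-a)}{bc}=1+\frac{w(c-a)}{bc}.
\]
The classical estimate $T_j\bigl(\frac{\tilde C+1}{\tilde C-1}\bigr)\ge \tfrac12\bigl(\frac{\sqrt{\tilde C}+1}{\sqrt{\tilde C}-1}\bigr)^j$ then gives a contraction factor $\frac{\sqrt{\tilde C}-1}{\sqrt{\tilde C}+1}$, up to a factor $2$.

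The main obstacle is matching the paper's constant $C=\kappa_{\ell+1}^2+w/\sigma_n^2$ and removing the factor $2$. For the constant, I would bound $\tilde C\le 1+w(c-a)/(ac)\le 1+w/a\le C$ using $b\ge a=\sigma_n^2$ and $(c-a)/c\le 1$, and $\kappa_{\ell+1}^2\ge 1$. Since $\frac{\sqrt{x}-1}{\sqrt{x}+1}$ is increasing in $x$, the bound with $\tilde C$ implies the bound with $C$. A slightly looser route (since $\tilde C$ is much smaller than $C$) absorbs the factor $2$ by noting that
\[
\frac{2\rho^j}{1+\rho^{2j}}\le \Bigl(\frac{\sqrt{C}-1}{\sqrt{C}+1}\Bigr)^{j},
\qquad \rho=\frac{\sqrt{\tilde C}-1}{\sqrt{\tilde C}+1},
\]
whenever $\tilde C$ is sufficiently smaller than $C$. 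If this fails in the regime $j$ small or $\tilde C\approx C$, I would accept a leading factor $2$, or exploit the slack from $\kappa_{\ell+1}^2\ge 1$, i.e.\ $C\ge 1+w/a\ge\tilde C$ with room to spare. I expect this constant bookkeeping to be the only delicate step; the polynomial construction itself is routine.
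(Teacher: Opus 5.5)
Your construction contains a sign error that changes the key constant. Since $a<b<c$, both factors in $q(a)=(a-b)(a-c)$ are negative, so $q(a)=q(d)=w(c-a)>0$ and $q(\mathcal{S})=[0,\,w(c-a)]$, not $[-w(c-a),0]$. With $b=a+w$ and $d=c+w$ one has $w(c-a)=bc-ad$. So $q(0)=bc$ lies only a distance $ad$ beyond the right endpoint, and the effective ratio is $\kappa:=bc/ad$, not your $\tilde C=1+w(c-a)/(bc)=2-ad/(bc)$. The fact that your $\tilde C<2$ for every spectrum should have been a warning: it would give a spectrum-independent contraction factor below $3-2\sqrt2$ per two steps. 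Once this is fixed, your polynomial is the paper's $T_{\lfloor k/2\rfloor}(q_2(z))/T_{\lfloor k/2\rfloor}(q_2(0))$. Your comparison $\kappa=\frac{(a+w)c}{a(c+w)}\le 1+w/\sigma_n^2\le C$ is then a valid and simpler substitute for the paper's chain of estimates for $bc/ad$.

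The remaining step, removing the factor $2$, cannot be done, because the stated inequality is false. With $\rho=\frac{\sqrt\kappa-1}{\sqrt\kappa+1}$ one has exactly $1/T_j\bigl(\frac{\kappa+1}{\kappa-1}\bigr)=\frac{2\rho^j}{1+\rho^{2j}}>\rho^j$. For $j=1$, your hoped-for inequality $\frac{2\rho}{1+\rho^2}\le\frac{\sqrt C-1}{\sqrt C+1}$ is equivalent to $\kappa\le\sqrt C$, which need not hold.

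Here is a concrete counterexample. Let $\mtx{A}=\mathrm{diag}(\sqrt{101},10,\sqrt2,1)$, $\ell=2$, $w=1$, so $C=2+1=3$ and the claimed factor at $k=2$ is $2-\sqrt3\approx0.268$. The polynomial $p(x)=1-\frac{204}{301}x+\frac{2}{301}x^2$ equals $\pm\frac{99}{301}$ at $1,2,100,101$, alternating in sign at $1,2,101$, so it is the degree-$2$ minimax polynomial on the spectrum. Take $\vct{b}=(1,0,\sqrt{5050},\sqrt{9999})^\top$ and $\vct{x}_0=\vct{0}$. Then $p$ satisfies the stationarity conditions $\sum_i b_i^2p(\sigma_i^2)\sigma_i^{2}=\sum_i b_i^2p(\sigma_i^2)\sigma_i^{4}=0$, so LSQR returns $\|\vct{r}_2\|_2/\|\vct{r}_0\|_2=\frac{99}{301}\approx0.329$. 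Here $\kappa=200/101>\sqrt3$.

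The paper's proof has the same gap. It bounds $1/|T_{\lfloor k/2\rfloor}(\frac{bc+ad}{bc-ad})|$ by $\bigl(\frac{\sqrt{bc/ad}-1}{\sqrt{bc/ad}+1}\bigr)^{\lfloor k/2\rfloor}$, silently dropping the factor $2$ that appears in its own bound \eqref{eq:cvgbound_chebyshev}. What your corrected argument does establish is the estimate with a leading factor $2$, your stated fallback. That is the version that is actually true.
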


\begin{proof}[Proof of \cref{thm:cvgbound_disjoint}]
Define $w_1 := \sigma_1^2 - \sigma_\ell^2$ and $w_2 := \sigma_{\ell+1}^2 - \sigma_n^2$, and let $w := \max\{w_1, w_2\}$. Then set
\begin{align}
    a:= \sigma_n^2, \quad b:= \sigma_n^2 + w > \sigma_{\ell+1}^2,
    \quad c:= \sigma_\ell^2, \quad d:= \sigma_\ell^2 + w > \sigma_1^2.
\end{align}
This constructs a union of two disjoint intervals of equal length $w$:
\[
\mathcal{S} := [a, b] \cup [c, d] \supseteq [\sigma_n^2, \sigma_{\ell+1}^2] \cup [\sigma_\ell^2, \sigma_1^2].
\]

We introduce the quadratic polynomial
\[
q_2(\lambda) = 1 + \frac{2(\lambda - b)(\lambda - c)}{ad - bc},
\]
which maps both intervals of $\mathcal{S}$ to $[-1,1]$ monotonically (this is possible because the intervals have equal length~\cite{ashby1988polynomial}). Then consider the polynomial
\begin{align}
    p_k(z) = \frac{T_{\lfloor k/2 \rfloor}(q_2(z))}{T_{\lfloor k/2 \rfloor}(q_2(0))}.
\end{align}
By the boundedness property of the ordinary Chebyshev polynomial on $[-1,1]$, we obtain
\begin{align}
    \max_{z \in \mathcal{S}} |p_k(z)| \leq \frac{1}{\left|T_{\lfloor k/2 \rfloor}\left(\frac{bc + ad}{bc - ad}\right)\right|}
    \leq \left(\frac{\sqrt{\frac{bc}{ad}}-1}{\sqrt{\frac{bc}{ad}}+1}\right)^{\lfloor k/2 \rfloor},
\end{align}
where the equal-length property of the intervals ensures $bc > ad$.

To bound $\frac{bc}{ad}$, define $\eta := |w_1 - w_2| < \max\{w_1, w_2\}$ and derive
\begin{align*}
    \frac{bc}{ad} 
    &\leq \frac{\sigma_{\ell+1}^2 + \eta}{\sigma_n^2}\cdot\frac{\sigma_\ell^2}{\sigma_1^2} 
    \leq \frac{\sigma_{\ell+1}^2 + \eta}{\sigma_n^2} 
    < \kappa_{\ell+1}^2 + \frac{\max\{w_1, w_2\}}{\sigma_n^2}\\
    &\leq \kappa_{\ell+1}^2 + \max\left\{\frac{2\varepsilon}{\sigma_n^2}, \kappa_{\ell+1}^2 - 1\right\}.
\end{align*}
\end{proof}

\begin{remark}
The bound depends only on $\kappa_{\ell+1}$ and the cluster widths, rather than on the global condition number $\kappa(\mtx A)$. Hence, large spectral gaps do not degrade convergence once clustering is achieved.
\end{remark}


\section{Implementation Details}
This appendix provides additional implementation details, including an overview of the CUR approximation method and technical aspects of efficient implementation, such as factorization strategies and practical considerations.

\subsection{CUR approximation details}\label{app:cur_details}

There exists a wide variety of CUR approximations, which differ in three main aspects: (1) row and column selection, (2) construction of the core matrix $\mtx{U}$, and (3) the choice of target rank $\ell$.

\subsubsection{Row and column selection}

Selecting appropriate rows and columns is crucial to the quality of the CUR approximation. Existing methods are broadly classified into two categories. The first consists of sampling-based approaches, which draw indices according to specific probability distributions, including leverage score sampling \cite{drineas2008relative, mahoney2009cur}, uniform sampling \cite{chiu2013sublinear}, volume sampling \cite{deshpande2006matrix, deshpande2010efficient, cortinovis2020low}, determinantal point process (DPP) sampling \cite{derezinski2021determinantal}, K-means++ \cite{oglic2017nystrom}, and BSS sampling \cite{batson2014twice, boutsidis2014near}. The second category uses pivoting-based approaches, where strategies such as column-pivoted QR (CPQR) \cite{golub2013matrix}, LU with partial or complete pivoting \cite{trefethen2022numerical, dong2023simpler}, and strong rank-revealing factorisations \cite{hong1992rank, gu1996efficient, pan2000existence} identify informative rows and columns via the permutation vectors produced during factorization.

For large matrices, directly selecting informative indices from $\mtx{A}$ can be costly, so randomized techniques construct a smaller ``sketch'' $\mtx{X}$ that preserves sufficient information for selection \cite{halko2011finding, drineas2008relative, voronin2017efficient, duersch2020randomized, dong2023simpler}. Given a matrix $\mtx{A} \in \mathbb{R}^{m \times n}$, a sketch is formed by multiplying $\mtx{A}$ by a random embedding matrix $\mtx{S}$, yielding $\mtx{X} = \mtx{S}\mtx{A}$ (or $\mtx{X} = \mtx{A}\mtx{S}$ depending on whether rows or columns are being sketched). Several sketching techniques are commonly used, including Gaussian sketches \cite{halko2011finding, indyk1998approximate, martinsson2020randomized, woodruff2014sketching}, subsampled randomized trigonometric transforms (SRTT) \cite{ailon2009fast, halko2011finding, martinsson2020randomized, woolfe2008fast, tropp2011improved, rokhlin2008fast}, hashing embeddings \cite{cartis2021hashing}, and sparse sign embeddings \cite{martinsson2020randomized, nelson2013osnap, kane2014sparser, woodruff2014sketching, clarkson2017low, meng2013low}. Sketching is known to incur only minor losses in accuracy \cite{halko2011finding}.

\subsubsection{Choice of the core matrix}

There are two main choices for the core matrix $\mtx{U}$, namely CUR with best approximation (CUR-BA) and CUR with cross approximation (CUR-CA). CUR-BA chooses $\mtx{U}$ to minimize $\|\mtx{A} - \mtx{C}\mtx{U}\mtx{R}\|_F$, namely
\[
\mtx{U} = \mtx{C}^\dagger \mtx{A} \mtx{R}^\dagger,
\]
and theoretical bounds showing that the resulting approximation error is at most a factor $\sqrt{2\ell+2}$ larger than the best rank-$\ell$ approximation error can be found in \cite{sorensen2016deim, dong2023simpler, osinsky2025close}.

CUR-CA instead chooses $\mtx{U}$ to be the pseudoinverse of the intersection of the selected columns and rows, namely $\mtx{U} = \mtx{A}(\vct{I}, \vct{J})^\dagger$. This construction requires evaluating $\mtx{A}$ only along the cross of $\ell$ rows and $\ell$ columns, which is particularly advantageous when full access to the matrix is not readily available. However, CUR-CA has the drawback that $\mtx{A}(\vct{I},\vct{J})$ can be nearly singular, leading to unfavorable approximation error due to numerical instability in computing the pseudoinverse \cite{dong2023simpler, martinsson2020randomized, sorensen2016deim}. For this reason, variants such as the $\varepsilon$-pseudoinverse \cite{chiu2013sublinear} or oversampling \cite{park2025accuracy} have been proposed to avoid numerical instability.

Despite these concerns, we adopt CUR-CA due to its low computational cost and its suitability for constructing lightweight preconditioners. In our setting, stability is maintained through oversampled sketching, which mitigates ill-conditioning in $\mtx{A}(\vct{I}, \vct{J})$. The full procedure is given in \cref{alg:cur}. The total computational cost is $\mathcal{O}((m+n+\ell)\ell^2)$. Specifically, forming $\mtx{C}$ via randomized LUPP on $\mtx{Y}^\top \in \F^{n \times d}$ costs $\mathcal{O}(nd^2)$, while forming $\mtx{R}$ via LUPP on $\mtx{C} \in \F^{m \times \ell}$ costs $\mathcal{O}(m\ell^2)$.

\begin{algorithm}
    \caption{Sketched CUR with cross approximation using LUPP \cite{dong2023simpler}}\label{alg:cur}
    \begin{algorithmic}[1]
        \Require{$\mtx{A} \in \F^{m \times n}$, target rank $\ell$}
        \Ensure Row indices $\vct{I}$, column indices $\vct{J}$
        \Statex
        \Function{CURApproximation}{$\mtx{A}, \ell$}
            \State $\mtx{S} \gets \texttt{sparsesign}(\lfloor 1.1\ell \rfloor, m, 8)$ \Comment{sparse sign embedding with $\xi = 8$}
            \State $[\sim, \sim, \vct{P}_{\mathrm{col}}] \gets \texttt{lu}((\mtx{S}\mtx{A})^\top, \texttt{`vector'})$
            \Comment{LUPP of $(\mtx{S}\mtx{A})^\top$}
            \State $\vct{J} \gets \vct{P}_{\mathrm{col}}(1:\ell)$ \Comment{select $\ell$ columns}
            \State $\mtx{C} \gets \mtx{A}(:,\vct{J})$

            \State $[\sim, \sim, \vct{P}_{\mathrm{row}}] \gets \texttt{lu}(\mtx{C}, \texttt{`vector'})$
            \Comment{LUPP of $\mtx{C}$}
            \State $\vct{I} \gets \vct{P}_{\mathrm{row}}(1:\ell)$
            \Comment{select $\ell$ rows}
            \State $\mtx{R} \gets \mtx{A}(\vct{I}, :)$
            \State $\mtx{U} \gets \mtx{A}(\vct{I}, \vct{J})^\dagger$ 
            \Comment{default QR; LU is more efficient}
            
            \State \Return $\mtx{C}, \mtx{U}, \mtx{R}$
        \EndFunction
    \end{algorithmic}
\end{algorithm}

\begin{theorem}[Theorem 2 in \cite{osinsky2025close}]\label{thm:curosinsky}
    Let $\mtx{A}, \mtx{Z} \in \mathbb{R}^{m \times n}$ with $\mathrm{rank}(\mtx{Z}) = \ell$. Then, in $\mathcal{O}(mn\ell)$ operations, it is possible to find rows $\mtx{R} \in \mathbb{R}^{\ell \times n}$ and columns $\mtx{C} \in \mathbb{R}^{m \times \ell}$ of the matrix $\mtx{A}$ such that 
    \begin{align*}
        \|\mtx{A} - \mtx{C}\mtx{C}^\dagger \mtx{A} \mtx{R}^\dagger \mtx{R}\|_F &\leq \sqrt{2+2\ell}\,\|\mtx{A} - \mtx{Z}\|_F,\\
        \|\mtx{A} - \mtx{C}\mtx{C}^\dagger \mtx{A} \mtx{R}^\dagger \mtx{R}\|_2 &\leq \sqrt{2+2\ell(\min(m,n)-\ell)}\,\|\mtx{A} - \mtx{Z}\|_2.
    \end{align*}
    In the same number of operations, it is also possible to select the same rows $\mtx{R} \in \mathbb{R}^{\ell \times n}$ but possibly different columns $\mtx{C} \in \mathbb{R}^{m \times \ell}$, such that
    \begin{align*}
        \|\mtx{A} - \mtx{C}\mtx{M}^\dagger \mtx{R}\|_F &\leq (\ell+1)\,\|\mtx{A}-\mtx{Z}\|_2,\\
        \|\mtx{A} - \mtx{C}\mtx{M}^\dagger \mtx{R}\|_2 &\leq \sqrt{1+\ell(\ell+2)(\min(m,n)-\ell)}\,\|\mtx{A}-\mtx{Z}\|_2,
    \end{align*}
    where $\mtx{M} \in \mathbb{R}^{\ell \times \ell}$ is the submatrix at the intersection of the selected rows and columns.
\end{theorem}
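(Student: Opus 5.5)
The plan is to prove the four inequalities by first selecting one side with a volume/projection argument relative to the fixed rank-$\ell$ matrix $\mtx{Z}$. We then select the other side conditioned on the first, and finally derandomize so the selections take $\mathcal{O}(mn\ell)$ operations.

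Write $\mtx{Z}=\mtx{U}_Z\mtx{\Sigma}_Z\mtx{V}_Z^\top$ with $\mtx{V}_Z\in\F^{n\times\ell}$. The first key step is column selection. We choose $\vct{J}$ with $\mtx{V}_Z(\vct{J},:)$ nonsingular and compare $\mtx{C}\mtx{C}^\dagger\mtx{A}$ with the rank-$\ell$ interpolant $\mtx{C}\,\mtx{V}_Z(\vct{J},:)^{-1}\mtx{V}_Z^\top$. This gives
\[
\|(\mtx{I}-\mtx{C}\mtx{C}^\dagger)\mtx{A}\|\le\|(\mtx{A}-\mtx{Z})(\mtx{I}-\mtx{\Pi}_J)\|,
\qquad \mtx{\Pi}_J:=\vct{e}_J\,\mtx{V}_Z(\vct{J},:)^{-1}\mtx{V}_Z^\top ,
\]
so everything reduces to bounding the oblique projector $\mtx{I}-\mtx{\Pi}_J$.
\begin{itemize}
\item For the Frobenius norm, we sample $\vct{J}$ with probability proportional to $\det(\mtx{V}_Z(\vct{J},:))^2$ (volume sampling). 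Cauchy--Binet then gives $\mathbb{E}\|(\mtx{A}-\mtx{Z})(\mtx{I}-\mtx{\Pi}_J)\|_F^2\le(\ell+1)\|\mtx{A}-\mtx{Z}\|_F^2$.
\item For the spectral norm, we take $\vct{J}$ of (locally) maximal volume in $\mtx{V}_Z$. Every entry of $\mtx{V}_Z\mtx{V}_Z(\vct{J},:)^{-1}$ is then at most $1$ in modulus, so $\|\mtx{I}-\mtx{\Pi}_J\|_2^2\le 1+\ell(n-\ell)$.
\end{itemize}

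Second, we fix $\mtx{C}$ and pick rows. Since $\mtx{C}\mtx{C}^\dagger$ is an orthogonal projector acting on the left, Pythagoras gives
\[
\|\mtx{A}-\mtx{C}\mtx{C}^\dagger\mtx{A}\mtx{R}^\dagger\mtx{R}\|^2_F=\|(\mtx{I}-\mtx{C}\mtx{C}^\dagger)\mtx{A}\|_F^2+\|\mtx{C}\mtx{C}^\dagger\mtx{A}(\mtx{I}-\mtx{R}^\dagger\mtx{R})\|_F^2 .
\]
In the spectral norm the same split holds up to a factor of $2$. The second term is handled by the same selection argument applied to rows of $\mtx{A}$, again relative to $\mtx{Z}$ through $\mtx{U}_Z$. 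It contributes another $(\ell+1)$, respectively $1+\ell(m-\ell)$. This yields the constants $2+2\ell$ and $2+2\ell(\min(m,n)-\ell)$, after choosing the orientation in which the smaller dimension appears.

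Third, we treat the cross-approximation bounds with core $\mtx{M}^\dagger$. We first select $\vct{I}$ by volume sampling on $\mtx{U}_Z$, as above. Conditionally on $\vct{I}$, we then select columns by volume sampling on the $\ell\times n$ matrix $\mtx{R}$ itself rather than on $\mtx{V}_Z$. The identity $\mtx{A}-\mtx{C}\mtx{M}^{-1}\mtx{R}=(\mtx{A}-\mtx{Z})-\text{(its cross interpolant)}$ holds because $\mtx{C}\mtx{M}^{-1}\mtx{R}$ reproduces any matrix whose rows lie in the span of $\mtx{R}$. We then expand the expected squared Frobenius error via Cauchy--Binet into a ratio of sums of squared $(\ell+1)$-minors over squared $\ell$-minors. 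We bound this ratio by $(\ell+1)^2\|\mtx{A}-\mtx{Z}\|_2^2$, using that each $(\ell+1)$-minor, after Schur complementation, factors as an $\ell$-minor times an entry of the residual.

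Finally, we derandomize by the method of conditional expectations, choosing indices greedily one at a time. Each greedy step needs only rank-one updates of an $m\times n$ residual, so the total cost is $\mathcal{O}(mn\ell)$.

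The main obstacle is the third step. Obtaining a \emph{spectral} quantity $\|\mtx{A}-\mtx{Z}\|_2$ on the right of a \emph{Frobenius} left-hand side, with constant $\ell+1$ rather than $\sqrt{\ell+1}\cdot\sqrt{\min(m,n)}$, requires a careful minor-ratio (Schur complement) estimate. If the direct estimate fails, the first thing to try is bounding the residual row-by-row by its projection outside $\operatorname{span}(\mtx{R})$ and summing with the volume-sampling weights.
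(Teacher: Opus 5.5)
The paper gives no proof of this statement (it is quoted from Osinsky), so your proposal has to stand on its own.

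\textbf{What works.} The Frobenius parts of your first two steps are sound. Sampling $\vct{J}$ with probability proportional to $\det(\mtx{V}_Z(\vct{J},:))^2$ is the volume-sampling regression bound applied row by row to $\mtx{A}-\mtx{Z}$, and Pythagoras then gives $\sqrt{2+2\ell}$. One slip: the interpolatory projector must be $\vct{e}_J\mtx{V}_Z(\vct{J},:)^{-\top}\mtx{V}_Z^\top$; with your version, $\mtx{Z}\mtx{\Pi}_J\neq\mtx{Z}$.

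\textbf{The main gap.} It is the step you single out as the main obstacle. As transcribed, $\|\mtx{A}-\mtx{C}\mtx{M}^\dagger\mtx{R}\|_F\le(\ell+1)\|\mtx{A}-\mtx{Z}\|_2$ is false, so no minor-ratio estimate can deliver it. Take $\mtx{A}=\mtx{I}_n$ and $\mtx{Z}$ the projector onto $\ell$ coordinates, so $\|\mtx{A}-\mtx{Z}\|_2=1$. By Eckart--Young, every matrix of rank at most $\ell$ (in particular $\mtx{C}\mtx{M}^\dagger\mtx{R}$) has Frobenius error at least $\sqrt{n-\ell}$, which exceeds $\ell+1$ once $n>\ell+(\ell+1)^2$. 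The intended bound has $\|\mtx{A}-\mtx{Z}\|_F$ on the right; this is also the form the paper's own corollary uses with $\mtx{Z}=\lfloor\mtx{A}\rfloor_\ell$.

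Your construction proves exactly that corrected bound. Fix $\vct{I}$ and draw $\vct{J}$ with probability proportional to $\det(\mtx{M})^2$. For $i\notin\vct{I}$, $j\notin\vct{J}$ the $(i,j)$ error entry is $\pm\det\mtx{A}(\vct{I}\cup\{i\},\vct{J}\cup\{j\})/\det\mtx{M}$, and the other entries vanish. Cauchy--Binet then collapses the ratio of minor sums to
\[
\mathbb{E}_{\vct{J}}\|\mtx{A}-\mtx{C}\mtx{M}^{-1}\mtx{R}\|_F^2\le(\ell+1)\,\|\mtx{A}(\mtx{I}_n-\mtx{R}^\dagger\mtx{R})\|_F^2\le(\ell+1)^2\,\|\mtx{A}-\mtx{Z}\|_F^2,
\]
where $\vct{I}$ is chosen by the row version of your first step. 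The middle quantity is a sum of squared distances from rows of $\mtx{A}$ to the row space of $\mtx{R}$, i.e.\ a Frobenius quantity; your stated fallback is precisely this computation. Drop the identity $\mtx{A}-\mtx{C}\mtx{M}^{-1}\mtx{R}=(\mtx{A}-\mtx{Z})-(\text{cross interpolant})$. It requires the rows of $\mtx{Z}$ to lie in the row space of $\mtx{R}$, which fails in general, and you do not need it.

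\textbf{Other missing or unsupported pieces.}
\begin{enumerate}
\item The fourth inequality is never addressed. The Frobenius computation says nothing about it. The natural factorization $\mtx{A}-\mtx{C}\mtx{M}^{-1}\mtx{R}=\mtx{A}(\mtx{I}_n-\mtx{R}^\dagger\mtx{R})(\mtx{I}_n-\vct{e}_J\mtx{M}^{-1}\mtx{R})$ gives only a product of dimension factors, of order $(1+\ell(m-\ell))(1+\ell(n-\ell))$. That is not the constant $1+\ell(\ell+2)(\min(m,n)-\ell)$, which is linear in the dimension.
\item Your spectral constants in the second step add up to $2+\ell(m-\ell)+\ell(n-\ell)$. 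This sum is invariant under transposition, so ``choosing the orientation'' cannot produce $2+2\ell(\min(m,n)-\ell)$ when $m\neq n$. The row-side projector built from $\mtx{U}_Z\in\F^{m\times\ell}$ inherently carries $m$. You need an extra idea here, or, as with the third inequality, you should check the transcribed constant against the source.
\item You use two different selections, volume sampling for $\|\cdot\|_F$ and maximal volume for $\|\cdot\|_2$. The statement asks for one pair $(\mtx{R},\mtx{C})$ satisfying both. Moreover, a locally maximal-volume set is not guaranteed to be found in $\mathcal{O}(mn\ell)$ operations. Volume sampling on $\mtx{V}_Z$ already gives $\mathbb{E}\|\mtx{V}_Z(\vct{J},:)^{-1}\|_F^2\le\ell(n-\ell+1)$, hence $\mathbb{E}\|\mtx{I}_n-\mtx{\Pi}_J\|_2^2\le1+\ell(n-\ell)$. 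That is the quantity a single derandomized selection would have to control alongside the Frobenius error.
\item The $\mathcal{O}(mn\ell)$ derandomization is asserted rather than shown. Conditional expectations must be evaluated for every candidate index at each greedy step, and fitting this into the budget is the substantive algorithmic content of the cited result.
\end{enumerate}
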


\begin{corollary}[\cite{osinsky2025close,zamarashkin2018existence}]
Using $\mtx{Z}=\mtx{A}_\ell$, where $\mtx{A}_\ell$ is the rank-$\ell$ truncated SVD of $\mtx{A}$, we obtain
\begin{align*}
    \|\mtx{A} - \mtx{C}\mtx{M}^\dagger \mtx{R}\|_F \leq (\ell+1)\|\mtx{A} - \mtx{A}_\ell\|_F.
\end{align*}
\end{corollary}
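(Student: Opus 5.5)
This corollary is a direct specialization of \cref{thm:curosinsky}, so no new argument is needed. We take $\mtx{Z}=\mtx{A}_\ell:=\lfloor\mtx{A}\rfloor_\ell$, the rank-$\ell$ truncated SVD. The second half of \cref{thm:curosinsky}, the cross-approximation part with the same rows $\mtx{R}$, columns $\mtx{C}$, and intersection $\mtx{M}$, then gives rows and columns with
\[
\|\mtx{A}-\mtx{C}\mtx{M}^\dagger\mtx{R}\|_F \le (\ell+1)\,\|\mtx{A}-\mtx{A}_\ell\|_2 .
\]
The theorem requires $\mathrm{rank}(\mtx{Z})=\ell$. If $\mathrm{rank}(\mtx{A})<\ell$, then $\mtx{A}_\ell$ has rank below $\ell$; we handle this either by noting that the error is then zero for a suitable selection, or by perturbing $\mtx{Z}$ by an arbitrarily small rank-filling term and passing to the limit.

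Next we compare norms. By Eckart--Young, $\|\mtx{A}-\mtx{A}_\ell\|_2=\sigma_{\ell+1}$ and $\|\mtx{A}-\mtx{A}_\ell\|_F=\bigl(\sum_{j>\ell}\sigma_j^2\bigr)^{1/2}$. Since $\sigma_{\ell+1}^2$ is one term of this nonnegative sum,
\[
\|\mtx{A}-\mtx{A}_\ell\|_2 \le \|\mtx{A}-\mtx{A}_\ell\|_F .
\]
Chaining this with the previous display yields the stated Frobenius bound $(\ell+1)\|\mtx{A}-\mtx{A}_\ell\|_F$. The Frobenius form is in fact weaker than what the theorem provides, so the step loses nothing essential.

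The only real point of care is bookkeeping: the bound must be applied to the intersection-core CUR $\mtx{C}\mtx{M}^\dagger\mtx{R}$, which matches \cref{def:cur} with $\mtx{U}=\mtx{A}(\vct{I},\vct{J})^\dagger$, and not to the best-approximation core $\mtx{C}^\dagger\mtx{A}\mtx{R}^\dagger$ from the first half of \cref{thm:curosinsky}. That first half has constant $\sqrt{2+2\ell}$, which is also at most $\ell+1$, but it concerns a different core matrix. The statement is existential: it asserts that suitable rows and columns exist and can be found in $\mathcal{O}(mn\ell)$ operations. It does not claim that the LUPP-based selection used in \cref{alg:cur} attains this bound.
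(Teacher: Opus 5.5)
Your deduction is valid from \cref{thm:curosinsky} as printed, but the printed cross-approximation bound is misstated exactly where you use it. As a result, the intermediate inequality you derive, $\|\mtx{A}-\mtx{C}\mtx{M}^\dagger\mtx{R}\|_F\le(\ell+1)\,\sigma_{\ell+1}(\mtx{A})$, is false in general.

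To see this, note that $\mtx{C}\mtx{M}^\dagger\mtx{R}$ has rank at most $\ell$. Eckart--Young then forces $\|\mtx{A}-\mtx{C}\mtx{M}^\dagger\mtx{R}\|_F\ge\|\mtx{A}-\mtx{A}_\ell\|_F=(\sum_{j>\ell}\sigma_j^2)^{1/2}$, which can be as large as $\sqrt{\min(m,n)-\ell}\,\sigma_{\ell+1}$. Concretely, take $\mtx{A}=\mtx{I}_n$ and $\ell=1$. Every choice of one row and one column gives error $\sqrt{n-1}$ (same index) or $\sqrt{n}$ (different indices, so $\mtx{M}=\mtx{0}$). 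This exceeds $(\ell+1)\sigma_2=2$ once $n\ge 6$.

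The cited results (\cite{zamarashkin2018existence} for existence, Theorem~2 of \cite{osinsky2025close} for the $\mathcal{O}(mn\ell)$ construction) have the Frobenius norm on both sides: $\|\mtx{A}-\mtx{C}\mtx{M}^\dagger\mtx{R}\|_F\le(\ell+1)\|\mtx{A}-\mtx{Z}\|_F$. The $\|\cdot\|_2$ on the right-hand side in \cref{thm:curosinsky} is a typo. Your remark that the Frobenius form is weaker than what the theorem provides therefore has it backwards. The Frobenius form is exactly what the theorem provides, and the spectral-norm version you chain through does not hold.

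With the correct statement, the corollary is just the substitution $\mtx{Z}=\mtx{A}_\ell$, which is what the citation intends. Your Eckart--Young comparison $\|\mtx{A}-\mtx{A}_\ell\|_2\le\|\mtx{A}-\mtx{A}_\ell\|_F$ is true but then plays no role.

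The rest of your bookkeeping is sound. In the rank-deficient case $\mathrm{rank}(\mtx{A})=r<\ell$, any $\ell$ rows and columns whose intersection has rank $r$ give $\mtx{C}\mtx{M}^\dagger\mtx{R}=\mtx{A}$ exactly, with the pseudoinverse core. The perturbation argument also works: only finitely many index sets exist, so one of them satisfies the bound along a sequence $\mtx{Z}_k\to\mtx{A}_\ell$. You are also right that the bound concerns the intersection core and is existential, not a guarantee for the LUPP selection in \cref{alg:cur}.
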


Standard CUR approximation often monitors the Frobenius norm to reduce aggregate approximation error, due to its unbiasedness and lower variance~\cite{horesh2025variance}.

\subsubsection{Choice of target rank}

Choosing the target rank $\ell$ is crucial for both the quality and efficiency of the preconditioner. If the target rank is too large, it can cause numerical instability in LUPP or pseudoinverse computations and increase computational cost, undermining the purpose of low-rank preconditioning. If it is too small, the best rank-$\ell$ approximation error, $\sigma_{\ell+1}(\mtx{A})$, may be large, preventing accurate approximation. Many works set the target rank of a low-rank approximation proportional to the effective dimension \cite{frangella2023randomized} or statistical rank of $\mtx{A}$, but in practice these quantities are typically unknown. We therefore adopt two adaptive strategies for selecting $\ell$ in the CUR approximation.

The fast rank estimation method of \cite{meier2024fast} estimates the numerical rank by repeatedly sketching the matrix from both sides and performing an SVD on the resulting compressed matrix. In particular, Gaussian or SRTT sketches are used to approximate the singular values, from which the rank is inferred (see Algorithm~\ref{alg:cur_rankest}). The total complexity is $\mathcal{O}((m+\log n)n\ell)=\mathcal{O}(mn\ell)$, where the Gaussian sketch costs $\mathcal{O}(mn\ell)$, the SRTT sketch costs $\mathcal{O}(n\ell\log n + \ell^3)$, and the SVD costs $\mathcal{O}(\ell^3)$. In practice, these sketches can be appended rather than recomputed at each iteration, and the final sketch can be reused for the subsequent randomized CUR approximation, so the overall cost remains modest. However, when the estimated rank $\ell$ is large, the SVD of the sketched matrix of size $\ell \times 2\ell$ can become a computational bottleneck.

As an alternative, Pritchard \textit{et al.}~\cite{pritchard2025fast} propose \emph{IterativeCUR}, a fast rank-adaptive CUR approximation method (outlined in Algorithm~\ref{alg:cur_icur}) that requires only a single sketch $\mtx{S}\mtx{A}$, independent of the target rank. The method incrementally increases the CUR rank by selecting additional column and row indices based on the sketched residual $\mtx{S}\mtx{A} - \mtx{S}\mtx{C}\mtx{U}\mtx{R}$. Crucially, the initial sketch $\mtx{S}\mtx{A}$ is reused throughout, and products involving $\mtx{S}\mtx{C}$ are extracted from it, avoiding any additional sketching. The CUR factors are updated iteratively by augmenting the index sets using the procedure described in Algorithm~\ref{alg:cur_augment}. This yields an efficient rank-adaptive scheme in which the approximation is refined progressively while maintaining low computational cost.

\begin{algorithm}
    \caption{Fast rank estimation \cite{meier2024fast}}
    \label{alg:cur_rankest}
    \begin{algorithmic}[1]
        \Require $\mtx{A} \in \F^{m \times n}$; rank tolerance $\varepsilon_{\text{rank}}$; initial rank upper-bound estimate $\ell_0$.
        \Ensure Estimated rank $\ell$.
        \Statex
        \While{true}
        \State $\mtx{S}_1 \gets \texttt{sparsesign}(\ell_0, m, 8)$ \Comment{$\ell_0 \times m$ sparse sign embedding with $\xi = 8$}
        \State $\mtx{X} \gets \mtx{S}_1\mtx{A}$
        \State $\mtx{S}_2 \gets \texttt{SRTT}(2\ell_0, n)$ \Comment{$2\ell_0 \times n$ SRTT embedding}
        \State $[\sigma_1, \ldots, \sigma_{\ell_0}] \gets \texttt{svd}(\mtx{X}\mtx{S}_2^\top)$
        \If{$\exists\, \hat{\ell}\ \text{s.t.}\ \sigma_{\hat{\ell}+1} \le \varepsilon_{\text{rank}} \cdot \sigma_{1}$}
            \State $\ell \gets \hat{\ell}$ \Comment{estimated $\varepsilon_{\text{rank}}$-rank of $\mtx{A}$}
            \State \texttt{break}
        \Else
            \State $\ell_0 \gets 2\ell_0$ \Comment{increase rank upper bound and repeat}
        \EndIf
        \EndWhile
    \end{algorithmic}
\end{algorithm}

\begin{algorithm}
    \caption{IterativeCUR \cite{pritchard2025fast}}
    \label{alg:cur_icur}
    \begin{algorithmic}[1]
        \Require{$\mtx{A} \in \F^{m \times n}$; block size $\ell_0$ (usually in $[5,250]$); error tolerance $\varepsilon_{\text{cur}}$.}
        \Statex
        \State $\vct{I} = \vct{J} = \emptyset,\quad \mtx{C}=\mtx{U}=\mtx{R}=\emptyset$
        \State $\mtx{S} \gets \texttt{sparsesign}(1.1\ell_0, m, 8)$ \Comment{oversampling}
        \State $\mtx{E}^{\mathrm{row}} \gets \mtx{S}\mtx{A}$ \Comment{sketch once and reuse it}
        \Repeat \Comment{rank grows by $\ell_0$ per iteration}
            \State $[\vct{I}, \vct{J}] \gets \texttt{augmentCUR}(\mtx{A}, \mtx{C}, \mtx{U}, \mtx{R}, \mtx{E}, \vct{I}, \vct{J})$ \Comment{use \cref{alg:cur_augment}}
            \State $\mtx{U} \gets \mtx{A}(\vct{I}, \vct{J})^\dagger$
            \Comment{default QR; LU is more efficient}
            \State $\mtx{C} \gets \mtx{A}(:, \vct{J})$, $\mtx{R} \gets \mtx{A}(\vct{I},:)$
            \State $\mtx{E}^{\mathrm{row}} \gets \mtx{S}\mtx{A} - \mtx{S}\mtx{C}\mtx{U}\mtx{R}$
            \Comment{use $\mtx{S}\mtx{C}$ from $\mtx{S}\mtx{A}$}
            \State $\rho \gets \|\mtx{E}^{\mathrm{row}}\|_F / \|\mtx{S}\mtx{A}\|_F$
        \Until{$\rho \leq \varepsilon_{cur}$}
        \Statex
        \State \Return $\mtx{C}, \mtx{U}, \mtx{R}$ \Comment{CUR approximation with rank $\ell_0 \times$(\# iterations)}
    \end{algorithmic}
\end{algorithm}

\begin{algorithm}
    \caption{\texttt{augmentCUR}}
    \label{alg:cur_augment}
    \begin{algorithmic}[1]
        \Require Matrix $\mtx{A} \in \F^{m \times n}$; CUR approximation $\mtx{C},\ \mtx{U},\ \mtx{R}$; row-sketched CUR residual $\mtx{E}^{\mathrm{row}} \in \F^{\ell_0 \times n}$; current row and column index sets $\vct{I}$ and $\vct{J}$.
        \Ensure $\ell = \texttt{size}(\mtx{C},2)$ previous approximation rank
        
        \Statex
        \State $\mtx{E}^{\mathrm{row}}(:,\vct{J})\gets \mtx{0}$ \Comment{avoid reselecting previously chosen indices}
        \State $[\sim, \sim, \vct{P}_{\mathrm{col}}] \gets \texttt{lu}({\mtx{E}^{\mathrm{row}}}^\top, \texttt{`vector'})$ \Comment{default LUPP; CPQR or Osinsky can also be used}
        \State $\vct{J}_{+} \gets \vct{P}_{\mathrm{col}}(1:\ell_0)$
        \State $\vct{J} \gets [\vct{J}, \vct{J}_{+}]$
        \Statex
        \State $\mtx{E}^{\mathrm{col}} \gets \mtx{A}(:,\vct{J}_{+}) - \mtx{CUR}(:, \vct{J}_{+})$
        \State $\mtx{E}^{\mathrm{col}}(\vct{I},:)\gets \mtx{0}$ \Comment{avoid reselecting previously chosen indices}
        \State $[\sim, \sim, \vct{P}_{\mathrm{row}}] \gets \texttt{lu}(\mtx{E}^{\mathrm{col}}, \texttt{`vector'})$ \Comment{default LUPP; CPQR or Osinsky can also be used}
        \State $\vct{I}_{+} \gets \vct{P}_{\mathrm{row}}(1:\ell_0)$
        \State $\vct{I} \gets [\vct{I}, \vct{I}_{+}]$

        \Statex
        \State \Return $\vct{I},\ \vct{J}$
        \Comment{$\ell_0$ additional row and column indices}
    \end{algorithmic}
\end{algorithm}

\subsection{Supplementary algorithms}

\begin{algorithm}
    \caption{\texttt{SVD-free CURPreconditioner (\cref{sec:svdfreepreconditioner})}}
    \label{alg:svdfreepreconditioner}
    \begin{algorithmic}[1]
        \Require{Matrix $\mtx{A}$, index sets $\mtx{I}$ and $\vct{J}$ of size $\l$, regularization parameter $\mu$.}
        \Ensure Preconditioner inverse operator $\mtx{\widetilde{P}}_\l^{-1}$

        \State $\mtx{C} \gets \mtx{A}(:, \vct{J})$
        \State $\mtx{R} \gets \mtx{A}(\vct{I}, :)$
        \Statex
        \State \textbf{Part 1: Factorize $\mtx{C}$ and $\mtx{R}$}
        \State $\mtx{T_C} \gets \texttt{chol}(\mtx{C}^\top \mtx{C})$ \Comment{sketched Cholesky QR for stability}
        \State $[\mtx{Q_R}, \mtx{T_R}] \gets \texttt{qr}(\mtx{R}^\top)$
        \Comment{sketched Cholesky QR for efficiency}
        
        \Statex
        \State \textbf{Part 2: Build the spectral preconditioning operator}
        \State $\mtx{M}^{-1} \gets \mtx{T_R}^{-\top}\mtx{A}(\vct{I},\vct{J}) \mtx{T_C}^{-1}$ 
        \State $\svhat \gets $ estimate of the CUR spectral error by \cref{eq:2normestimate} \Comment{target level}
        \State $\mtx{\widetilde{P}}_\l^{-1} \gets \svhat\mtx{Q_R} \mtx{M}^{-1}\mtx{Q_R}^\top + (\mtx{I_n}-\mtx{Q_R}\mtx{Q_R}^\top)$
        \Comment{apply via \texttt{matvec}}
    \end{algorithmic}
\end{algorithm}


\begin{algorithm}
    \caption{\texttt{augmentQR}}
    \label{alg:augmentqr}
    \begin{algorithmic}[1]
        \Require Orthonormal $\mtx{Q}\in\F^{m\times n}$, upper-triangular $\mtx{R}\in\F^{n\times n}$, additional columns $\mtx{C}\in\F^{m\times p}$
        \Ensure Orthonormal $\mtx{Q}_{\mathrm{new}}\in\F^{m\times(n+p)}$ and upper-triangular $\mtx{R}_{\mathrm{new}}\in\F^{(n+p)\times(n+p)}$ with $[\mtx{Q}_{\mathrm{new}}, \mtx{R}_{\mathrm{new}}]=\texttt{qr}([\mtx{Q}\mtx{R},\mtx{C}])$
        \Statex
        \State $\mtx{P}_1 \gets \mtx{Q}^\top \mtx{C}$
        \State $\mtx{E} \gets \mtx{C} - \mtx{Q}\mtx{P}_1$ \Comment{project and form the residual}
        \State $\mtx{P}_2 \gets \mtx{Q}^\top \mtx{E}$ \Comment{reorthogonalization (optional but recommended)}
        \State $\mtx{E} \gets \mtx{E} - \mtx{Q}\mtx{P}_2$
        \State $[\mtx{Q}_E, \mtx{R}_E] \gets \texttt{qr}(\mtx{E},0)$ \Comment{sketched Cholesky QR can be used}
        \State $\mtx{Q}_{\mathrm{new}} \gets [\mtx{Q},\ \mtx{Q}_E]$
        \State $\mtx{R}_{\mathrm{new}} \gets \begin{bmatrix} \mtx{R} & \mtx{P}_1+\mtx{P}_2 \\[2pt] \mtx{0} & \mtx{R}_E \end{bmatrix}$
        \Statex
        \State \Return $\mtx{Q}_{\mathrm{new}},\ \mtx{R}_{\mathrm{new}}$
    \end{algorithmic}
\end{algorithm}

\subsection{Computational Complexity}\label{app:complexity}

The computational cost of the proposed algorithm is dominated by two primary components: (i) factorizations associated with constructing and updating the low-rank preconditioner, and (ii) matrix-matrix multiplications involving the full problem dimensions $m$ and $n$. We detail each below, emphasizing how these costs are mitigated in practice.

\paragraph{Preconditioner construction}
Given CUR factors, constructing a rank-$\ell$ preconditioner costs $\mathcal{O}((m+n+\ell)\ell^2)$, while both storage and application cost $\mathcal{O}(n\ell)$.

\paragraph{Factorization costs}

A dominant cost arises in each preconditioning update (\cref{alg:preconditioner}, line~5), where an SVD of a dense $\ell \times \ell$ matrix is required. This step costs $\mathcal{O}(\ell^3)$ and is typically the most expensive operation. The SVD-free variant introduced in \cref{sec:svdfreepreconditioner} eliminates this decomposition entirely, reducing the construction cost by $\mathcal{O}(\ell^3)$ (with a large constant). The trade-off is a modest increase in application cost: applying the preconditioner requires two additional triangular solves, contributing approximately $2\ell^2$ extra flops per iteration.

Maintaining a QR factorization of the growing matrix $\mtx{R}^\top \in \F^{n \times q\ell_0}$ (with $q = \ell/\ell_0$) would naively cost $\mathcal{O}(n\ell^3/\ell_0)$. Instead, we update the factorization incrementally using \cref{alg:augmentqr}, reducing the cost to $\mathcal{O}(n\ell^2)$. In addition, we employ a block QR append strategy together with sketched Cholesky QR\cite{balabanov2022randomized}, which improves efficiency while maintaining numerical stability.

The pseudoinverse of the CUR core matrix introduces another factorization cost. A direct QR-based approach leads to $\mathcal{O}(\ell^4)$ complexity. While update techniques based on Givens rotations can reduce this to $\mathcal{O}(\ell^3/\ell_0)$, in practice we instead exploit highly optimized dense factorizations. In particular, replacing QR with LU can improve performance due to better parallel scalability, while remaining sufficiently accurate for moderately conditioned problems.

Finally, LUPP factorizations required in the CUR construction incur a cost of $\mathcal{O}(\ell \ell_0 (m+n))$.

\paragraph{Matrix-matrix multiplications}

Matrix-matrix multiplications involving the full dimensions $m$ and $n$ arise in several parts of the algorithm. In particular, the computation of column and row residuals during CUR updates involves products with nominal complexity $\mathcal{O}((m+n)\ell^2)$. These operations are implemented as BLAS-3 kernels and therefore achieve high efficiency on modern hardware. When $\mtx{C}$ and $\mtx{R}$ are sparse, the effective cost is substantially reduced.

In forming and applying the preconditioner, we avoid explicitly constructing large dense intermediate matrices. For example, products of the form $\mtx{Q_R}\Vhat_M$ are applied implicitly via matrix-vector operations. When $\mtx{R}$ is sparse, we further exploit the representation $\mtx{R}\mtx{T_R^{-1}}$ to reduce both computational and storage costs.

\paragraph{Sketching and additional costs}

The initial sketching step (\cref{alg:aplls}, lines~3--4) can be expensive for large $m$ and $n$. A Gaussian sketch requires $\mathcal{O}(mn\ell_0)$ operations and is therefore impractical at scale. To mitigate this, we employ sparse sign embeddings~\cite{nelson2013osnap, cohen2016nearly}, reducing the cost to $\mathcal{O}(\xi \cdot \mathrm{nnz}(\mtx{A}))$, where $\xi$ is the number of nonzeros per column of the sketching matrix (typically $\xi = \min(\ell_0, 8)$~\cite{tropp2019streaming}).

Additional costs include residual norm evaluation for convergence monitoring, which requires $\mathcal{O}(n\ell_0)$ operations and is negligible compared with the dominant terms above.

\section{Numerical Results}\label{app:numericalstudies_detail}

This section contains the technical details required for an efficient implementation, including a detailed explanation of the interplay between spectral structure and Krylov convergence, sketch choices, and hyperparameter choices.

\subsection{Overpreconditioning}\label{sec:overpreconditioning_detail}

Consider a matrix $\mtx{A}$ whose singular values satisfy $\sigma_1 \geq \cdots \geq \sigma_\ell \gg \sigma_{\ell+1} \geq \cdots \geq \sigma_n$. A rank-$k$ spectral preconditioner with $k=\ell$ collapses the dominant singular values to the level $\hat{\sigma}=\sigma_\ell$, thereby preserving a clear separation from the trailing singular values $\sigma_{\ell+1},\ldots,\sigma_n$. In contrast, when $k > \ell$, the target level $\hat{\sigma}=\sigma_k$ lies within the trailing cluster, destroying the spectral separation between dominant and trailing components.

This spectral gap is precisely what allows LSQR to distinguish directions that are already effectively resolved from those that still require correction. Indeed, LSQR builds Krylov subspaces of the form
\[
\mathcal{K}_j(\widetilde{\mtx{A}}^\top\widetilde{\mtx{A}}, \widetilde{\mtx{A}}^\top \vct{b}), \quad \widetilde{\mtx{A}}^\top \vct{b} = \widetilde{\mtx{V}}\widetilde{\mtx{\Sigma}}\widetilde{\mtx{U}}^\top \vct{b},
\]
where $\widetilde{\mtx{A}}:= \mtx{A}\mtx{P}_{\l,\mu}^{-1}$,
so the early Krylov bases are dominated by singular directions corresponding to large values of $\widetilde{\sigma}_i \widetilde{\vct{u}}_i^\top \vct{b}$. A clear spectral gap allows Krylov polynomials to selectively resolve these dominant components.

When the spectral gap is destroyed by overpreconditioning ($k>\ell$), the singular values of the preconditioned matrix become tightly clustered. As a result, singular directions are no longer well separated spectrally, Ritz vectors mix dominant and trailing components, and Krylov polynomial filtering becomes ineffective. Small perturbations can easily rotate the singular vectors.

In problems where the transformed solution $\vct{y}_\ast=\mtx{P}\vct{x}_\ast$ is approximately uniformly distributed across singular directions (for example, in inconsistent or consistent-$b$ settings), convergence is then driven primarily by the reduced condition number and is less sensitive to the precise alignment of early Krylov bases. In contrast, in consistent-$x$ settings where $\vct{y}_\ast$ is strongly biased toward the dominant right singular subspace of the original matrix, the misalignment between early Krylov bases and these dominant directions leads to inefficient convergence despite improved conditioning.

This analysis explains why overpreconditioning can be counterproductive and directly motivates an adaptive strategy that alternates between progressive preconditioning and LSQR phases. In the early LSQR phases, the solution is sought for a preconditioned problem that retains sufficient spectral separation to enable efficient resolution of the dominant components. In later phases, LSQR is warm-started using the driving vector $\widetilde{\mtx{A}}^\top(\vct{b}-\widetilde{\mtx{A}}\vct{y}_0)$, at which point the presence of a pronounced spectral gap is no longer essential, as the residual is confined to a progressively refined subspace. Moreover, the adaptive CUR approximation inherently prevents overestimation of the target rank, ensuring that additional preconditioning effort is introduced only when it is likely to be beneficial.

\subsection{Sketch comparison}\label{sec:sketch}

We also study the impact of different sketching methods. In this set of experiments, we consider three sketch types: Gaussian embeddings, sparse embeddings with $\xi=8$~\cite{kane2014sparser}, and SRFT (or 1-hashing embeddings~\cite{cartis2021hashing}), all with sketch dimension $s$. \cref{table:sketchcost} summarizes the asymptotic cost of applying the different sketching operators. For Gaussian and sparse embeddings, the cost scales linearly with the number of nonzeros in $A$. In contrast, the SRFT embedding relies on a dense orthogonal transform, resulting in a cost that scales with the full matrix dimensions.

\jh{experiments on sparse matrices}
\input{plot_tex/num_stud_sketch}

\begin{table}[t]
\centering
\caption{Asymptotic cost of applying different sketching matrices to an $m \times n$ matrix $A$.
Here $s$ denotes the sketch dimension, $\xi$ the number of nonzeros per column in the sparse embedding,
and $\mathrm{nnz}(A)$ the number of nonzero entries in $A$.}
\label{table:sketchcost}
\begin{tabular}{lc}
\toprule
Sketch type & Cost \\ 
\midrule
Gaussian embedding 
& $\mathcal{O}\!\left(s\,\mathrm{nnz}(A)\right)$ \\

Sparse embedding 
& $\mathcal{O}\!\left(\xi\,\mathrm{nnz}(A)\right)$ \\

SRFT\footnotemark 
& $\mathcal{O}\!\left(m n \log m\right)$ \\
\bottomrule
\end{tabular}
\end{table}
\footnotetext{The stated cost for the SRFT embedding assumes a standard fast transform
(e.g.\ DCT or Hadamard), which densifies the input matrix.}

\jh{The block size $\ell_0$ here is playing a big role throughout the algorithm. Pritchard et al.~\cite{pritchard2025fast} show that high approximation accuracy is empirically preserved even for block sizes as small as five, but for optimal computational speed, larger block sizes (around 100) perform better.}

\subsection{Hyperparameters for adaptivity}

The behavior of the fully adaptive scheme is controlled by three hyperparameters: the CUR error tolerance $\varepsilon_{cur}$, the re-preconditioning tolerance $\nu_{prec}$, and the dynamic LSQR stopping tolerance $\nu_{lsqr}$. Together, these regulate rank growth, preconditioner updates, and the balance between preconditioning and Krylov iterations.

From \cref{sec:adaptiveprec_sensitivity}, we see that our method is robust to moderate overestimation of the target rank but sensitive to underestimation. Consequently, the CUR tolerance $\varepsilon_{cur}$ must be chosen to avoid truncating significant spectral components. For regularized problems, the regularization parameter $\mu$ provides a natural reference point because it determines the desired solution accuracy. Accordingly, we set
\[
\varepsilon_{cur} \propto \mu,
\]
and study its effect on rank selection and convergence. The resulting behavior shows that early convergence is largely insensitive to the tolerance, whereas overly small tolerances trigger unnecessary rank growth and increase runtime with negligible residual improvement. Hence, we recommend $\varepsilon_{cur} \in [30\mu, 100\mu]$.

The balance between preconditioning and LSQR solving phases is controlled through the choice of re-preconditioning tolerance $\nu_{prec}$ and LSQR dynamic stopping tolerance $\nu_{lsqr}$, as defined in \cref{sec:adaptivitycriteria}. The re-preconditioning tolerance $\nu_{\mathrm{prec}}$ controls when the preconditioner is updated. Since constructing the preconditioner $\mtx{P}$ is relatively expensive, updates should be triggered only when a meaningful improvement in preconditioning quality is expected, unless we use the SVD-free variant. In practice, performance is largely insensitive to the exact choice of $\nu_{\mathrm{prec}}$, and a moderate value (e.g.\ $\nu_{\mathrm{prec}} = 10$) provides a good balance between avoiding unnecessary updates and maintaining effective preconditioning.

The LSQR tolerance $\nu_{\mathrm{lsqr}}$ determines when intermediate LSQR phases terminate. If $\nu_{\mathrm{lsqr}}$ is too small, LSQR may terminate prematurely despite strong convergence, leading to overly frequent re-preconditioning and increased total runtime. Conversely, if $\nu_{\mathrm{lsqr}}$ is too large, LSQR may continue despite slow progress, resulting in wasted iterations. Empirically, values in the range $\nu_{\mathrm{lsqr}} \in [100, 200]$ provide a good trade-off, as illustrated in \cref{fig:num_stud_cvgrate}.

Warm-starting LSQR after each preconditioner update preserves continuity across phases and helps mitigate degradation due to overpreconditioning, but its effectiveness depends on the choice of $\nu_{\mathrm{lsqr}}$. If LSQR phases terminate too early, overpreconditioning may destroy the spectral gap before a sufficiently accurate solution is reached, leading to slow convergence in the final LSQR phase (see \cref{sec:overpreconditioning}).

\input{plot_tex/num_stud_cvgrate}

\subsection{Benchmarking experiments}\label{app:benchmark_general}
\input{plot_tex/appendix_restimes}
\mainalg{} is not always the fastest method to reach a specific tolerance, but it consistently delivers stable and competitive performance across the entire time horizon.

%% file: plot_tex/num_stud_sketch.tex

\begin{figure}[!htbp]
  \centering

  \begin{subfigure}{0.48\textwidth}
    \centering
    \includegraphics[width=\linewidth,height=0.28\textheight,keepaspectratio]
      {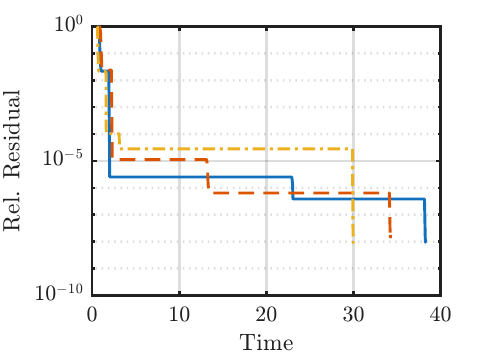}
    \caption{$6000 \times 5000$}
  \end{subfigure}\hfill
  \begin{subfigure}{0.48\textwidth}
    \centering
    \includegraphics[width=\linewidth,height=0.28\textheight,keepaspectratio]
      {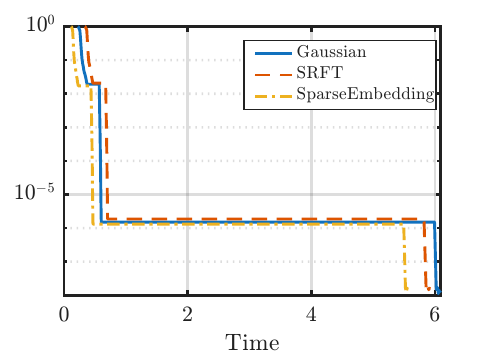}
    \caption{$15000 \times 2000$}
  \end{subfigure}\hfill
  
  \caption{Different Sketches. SRFT dependent on log m, Gaussian dependent on sketch size (aka. block size) is making (a) and (b) different.}
  \label{fig:num_stud_cvgrate1}
\end{figure}

%% file: plot_tex/num_stud_cvgrate.tex

\begin{figure}[!htbp]
  \centering

  \begin{subfigure}{0.48\textwidth}
    \centering
    \includegraphics[width=\linewidth,height=0.28\textheight,keepaspectratio]
      {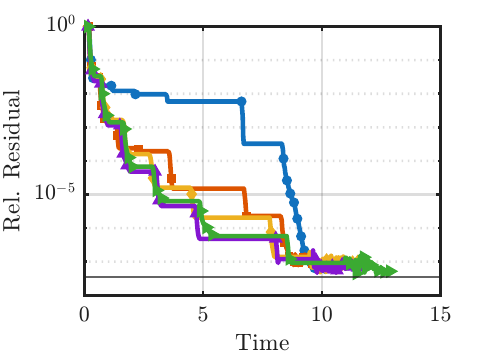}
    \caption{Smooth decay}
  \end{subfigure}\hfill
  \begin{subfigure}{0.48\textwidth}
    \centering
    \includegraphics[width=\linewidth,height=0.28\textheight,keepaspectratio]
      {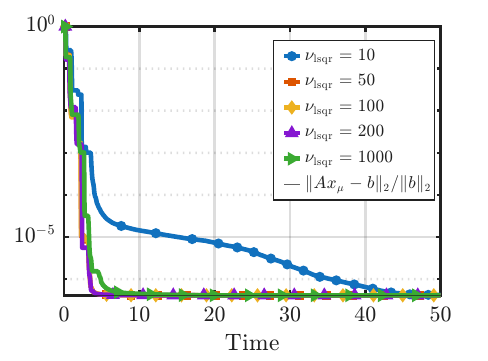}
    \caption{Sharp decay}
  \end{subfigure}\hfill
  
  \caption{Different dynamic stopping criterion. On dense $6000 \times 5000$ coherent matrix with smooth decay and sharp decay.}
  \label{fig:num_stud_cvgrate}
\end{figure}

%% file: plot_tex/appendix_restimes.tex
\begin{figure}[!htbp]
  \centering
  \begin{subfigure}{0.33\textwidth}
    \centering
    \includegraphics[width=\linewidth,height=0.28\textheight,keepaspectratio]
        {figures/restimes32-19Mar/restime_d_6k5k_x_incoh.pdf}
    \caption{Sharp decay\\Incoherent}
  \end{subfigure}\hfill
  \begin{subfigure}{0.33\textwidth}
    \centering
    \includegraphics[width=\linewidth,height=0.28\textheight,keepaspectratio]
      {figures/restimes32-19Mar/restime_d_6k5k_xincon2_incoh.pdf}
    \caption{Sharp decay\\Incoherent}
  \end{subfigure}\hfill
  \begin{subfigure}{0.33\textwidth}
    \centering
    \includegraphics[width=\linewidth,height=0.28\textheight,keepaspectratio]
      {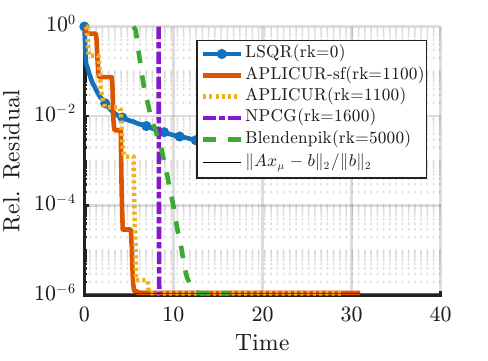}
    \caption{Sharp decay\\Incoherent}
  \end{subfigure}\hfill

  \begin{subfigure}{0.33\textwidth}
    \centering
    \includegraphics[width=\linewidth,height=0.28\textheight,keepaspectratio]
        {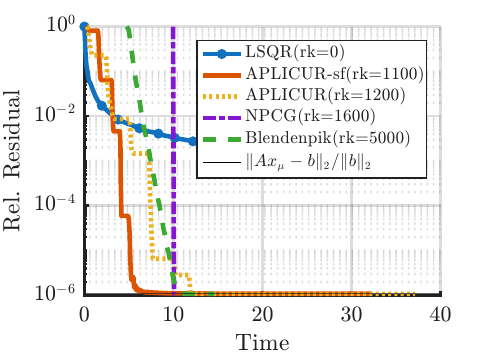}
    \caption{Sharp decay\\Coherent}
  \end{subfigure}\hfill
  \begin{subfigure}{0.33\textwidth}
    \centering
    \includegraphics[width=\linewidth,height=0.28\textheight,keepaspectratio]
      {figures/restimes32-19Mar/restime_d_6k5k_xincon2_coh.pdf}
    \caption{Sharp decay\\Coherent}
  \end{subfigure}\hfill
  \begin{subfigure}{0.33\textwidth}
    \centering
    \includegraphics[width=\linewidth,height=0.28\textheight,keepaspectratio]
      {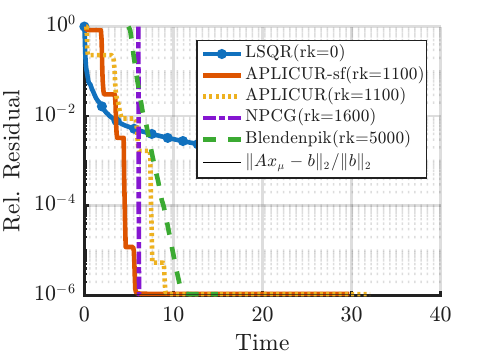}
    \caption{Sharp decay\\Coherent}
  \end{subfigure}\hfill

  \begin{subfigure}{0.33\textwidth}
    \centering
    \includegraphics[width=\linewidth,height=0.28\textheight,keepaspectratio]
        {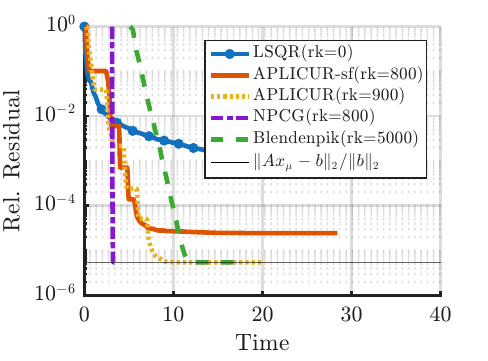}
    \caption{Smooth decay\\Incoherent}
  \end{subfigure}\hfill
  \begin{subfigure}{0.33\textwidth}
    \centering
    \includegraphics[width=\linewidth,height=0.28\textheight,keepaspectratio]
      {figures/restimes32-19Mar/restime_d_6k5k_sm_xincon2_incoh.pdf}
    \caption{Smooth decay\\Incoherent}
  \end{subfigure}\hfill
  \begin{subfigure}{0.33\textwidth}
    \centering
    \includegraphics[width=\linewidth,height=0.28\textheight,keepaspectratio]
      {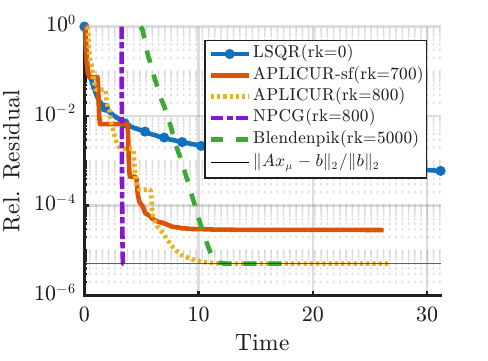}
    \caption{Smooth decay\\Incoherent}
  \end{subfigure}\hfill

  \begin{subfigure}{0.33\textwidth}
    \centering
    \includegraphics[width=\linewidth,height=0.28\textheight,keepaspectratio]
        {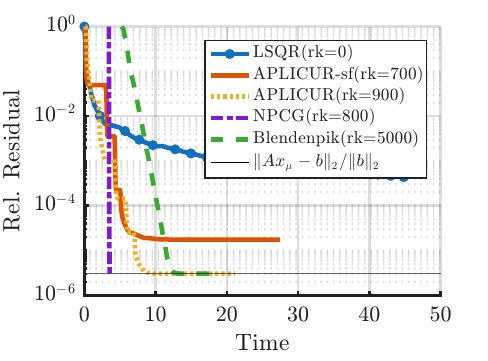}
    \caption{Smooth decay\\Coherent}
  \end{subfigure}\hfill
  \begin{subfigure}{0.33\textwidth}
    \centering
    \includegraphics[width=\linewidth,height=0.28\textheight,keepaspectratio]
      {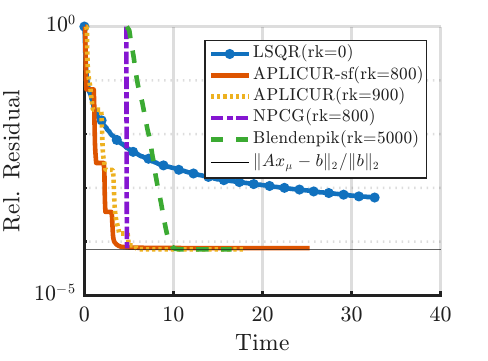}
    \caption{Smooth decay\\Coherent}
  \end{subfigure}\hfill
  \begin{subfigure}{0.33\textwidth}
    \centering
    \includegraphics[width=\linewidth,height=0.28\textheight,keepaspectratio]
      {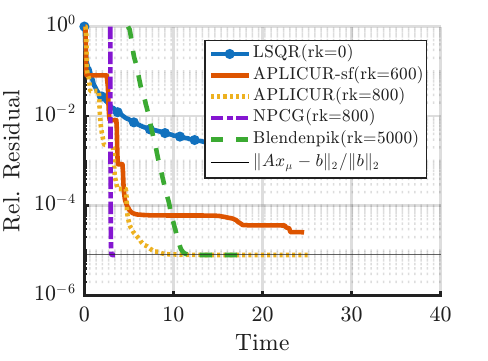}
    \caption{Smooth decay\\Coherent}
  \end{subfigure}\hfill
  
  \caption{Results for $6000 \times 5000$ matrices in the consistent-$\mathbf{x}$ case. Left column: zero noise ($\|\mathbf{e}\|_2 = 0$); middle column: large noise ($\|\mathbf{e}\|_2 = 10^{-2}$); right column: small noise ($\|\mathbf{e}\|_2 = 10^{-12}$).}
\end{figure}

\begin{figure}[!htbp]
  \centering
  
  \begin{subfigure}{0.33\textwidth}
    \centering
    \includegraphics[width=\linewidth,height=0.28\textheight,keepaspectratio]
      {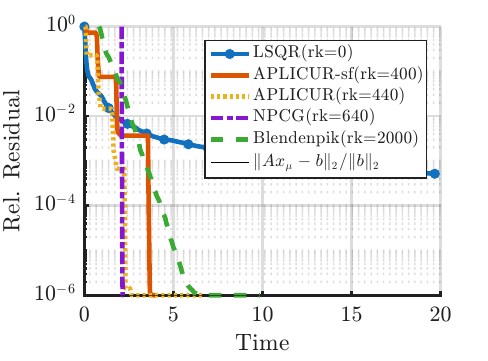}
    \caption{Sharp decay\\Incoherent}
  \end{subfigure}\hfill
  \begin{subfigure}{0.33\textwidth}
    \centering
    \includegraphics[width=\linewidth,height=0.28\textheight,keepaspectratio]
      {figures/restimes32-19Mar/restime_d_15k2k_xincon2_incoh.pdf}
    \caption{Sharp decay\\Incoherent}
  \end{subfigure}\hfill
  \begin{subfigure}{0.33\textwidth}
    \centering
    \includegraphics[width=\linewidth,height=0.28\textheight,keepaspectratio]
      {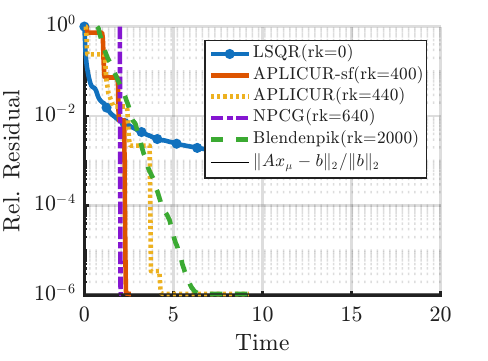}
    \caption{Sharp decay\\Incoherent}
  \end{subfigure}\hfill

  \begin{subfigure}{0.33\textwidth}
    \centering
    \includegraphics[width=\linewidth,height=0.28\textheight,keepaspectratio]
        {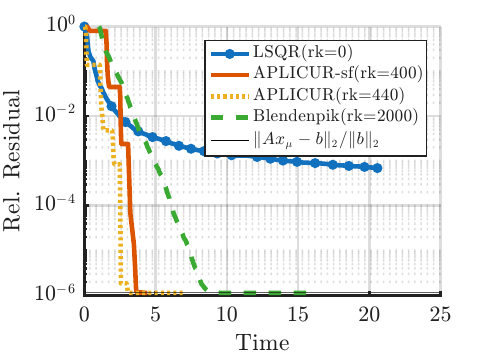}
    \caption{Sharp decay\\Coherent}
  \end{subfigure}\hfill
  \begin{subfigure}{0.33\textwidth}
    \centering
    \includegraphics[width=\linewidth,height=0.28\textheight,keepaspectratio]
      {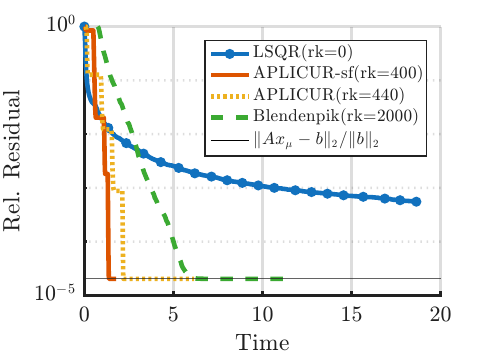}
    \caption{Sharp decay\\Coherent}
  \end{subfigure}\hfill
  \begin{subfigure}{0.33\textwidth}
    \centering
    \includegraphics[width=\linewidth,height=0.28\textheight,keepaspectratio]
      {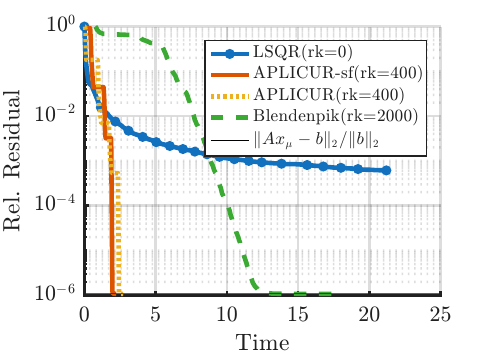}
    \caption{Sharp decay\\Coherent}
  \end{subfigure}\hfill

  \begin{subfigure}{0.33\textwidth}
    \centering
    \includegraphics[width=\linewidth,height=0.28\textheight,keepaspectratio]
        {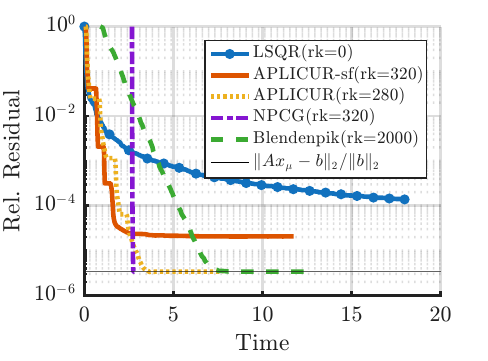}
    \caption{Smooth decay\\Incoherent}
  \end{subfigure}\hfill
  \begin{subfigure}{0.33\textwidth}
    \centering
    \includegraphics[width=\linewidth,height=0.28\textheight,keepaspectratio]
      {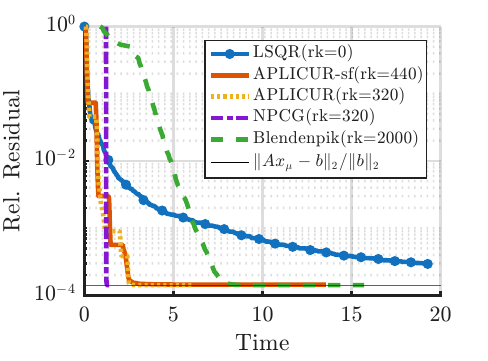}
    \caption{Smooth decay\\Incoherent}
  \end{subfigure}\hfill
  \begin{subfigure}{0.33\textwidth}
    \centering
    \includegraphics[width=\linewidth,height=0.28\textheight,keepaspectratio]
      {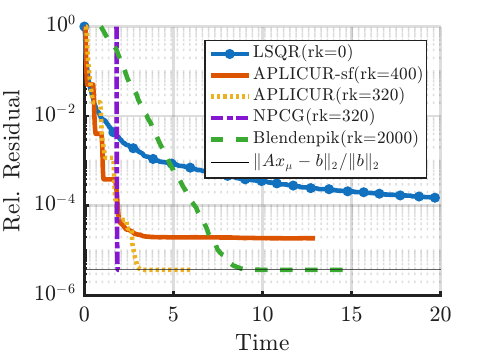}
    \caption{Smooth decay\\Incoherent}
  \end{subfigure}\hfill

  \begin{subfigure}{0.33\textwidth}
    \centering
    \includegraphics[width=\linewidth,height=0.28\textheight,keepaspectratio]
        {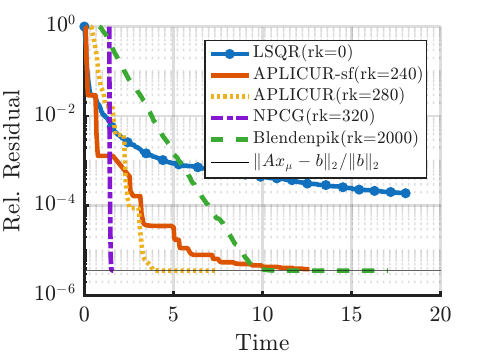}
    \caption{Smooth decay\\Coherent}
  \end{subfigure}\hfill
  \begin{subfigure}{0.33\textwidth}
    \centering
    \includegraphics[width=\linewidth,height=0.28\textheight,keepaspectratio]
      {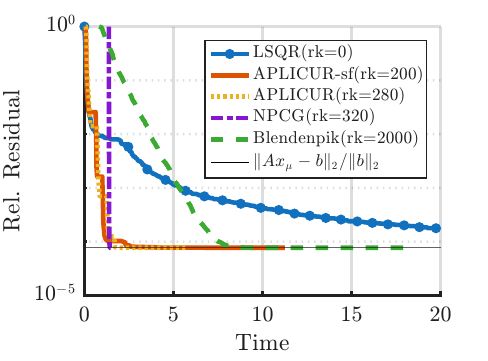}
    \caption{Smooth decay\\Coherent}
  \end{subfigure}\hfill
  \begin{subfigure}{0.33\textwidth}
    \centering
    \includegraphics[width=\linewidth,height=0.28\textheight,keepaspectratio]
      {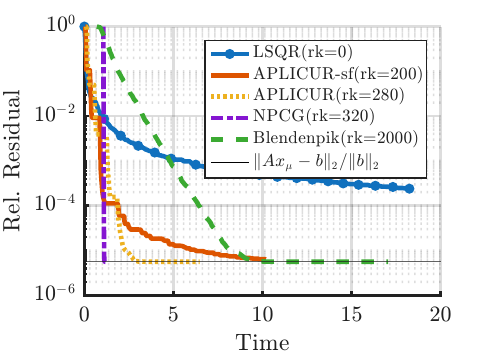}
    \caption{Smooth decay\\Coherent}
  \end{subfigure}\hfill
  
  \caption{Results for $15000 \times 2000$ matrices in the consistent-$\mathbf{x}$ case. Left column: zero noise ($\|\mathbf{e}\|_2 = 0$); middle column: large noise ($\|\mathbf{e}\|_2 = 10^{-2}$); right column: small noise ($\|\mathbf{e}\|_2 = 10^{-12}$).}
\end{figure}